\newcommand\reallywidecheck[1]{%
\savestack{\tmpbox}{\stretchto{%
  \scaleto{%
    \scalerel*[\widthof{\ensuremath{#1}}]{\kern-.6pt\bigwedge\kern-.6pt}%
    {\rule[-\textheight/2]{1ex}{\textheight}}
  }{\textheight}%
}{0.5ex}}%
\stackon[1pt]{#1}{\scalebox{-1}{\tmpbox}}%
}
\numberwithin{equation}{section}
\renewcommand{\ge}{\geqslant}
\renewcommand{\le}{\leqslant}
\def\pv#1{\ensuremath{\mathsf{#1}}}
\def\Om#1#2{\ensuremath{\overline{\Omega}_{#1}{\pv{#2}}}}
\let\cal=\mathcal
\def\Cl#1{\ensuremath{\cal{#1}}}
\theoremstyle{plain}
\newtheorem{Thm}{Theorem}[section]
\newtheorem{Prop}[Thm]{Proposition}
\newtheorem{Lemma}[Thm]{Lemma}
\newtheorem{Cor}[Thm]{Corollary}
\theoremstyle{definition}
\newtheorem{eg}[Thm]{Example}
\begin{document}

\title{Profinite congruences and unary algebras}
\thanks{The first author acknowledges partial support by CMUP
  (UID/MAT/00144/2019) which is funded by FCT (Portugal) with
  national (MATT'S) and European structural funds (FEDER) under the
  partnership agreement PT2020. %
  The work was largely carried out at Masaryk University, whose
  hospitality is gratefully acknowledged, with the support of the FCT
  sabbatical scholarship SFRH/BSAB/142872/2018. %
  The second author was supported by Grant 19-12790S of the Grant
  Agency of the Czech Republic.}

\author[J. Almeida]{Jorge Almeida}%
\address{CMUP, Dep.\ Matem\'atica, Faculdade de Ci\^encias,
  Universidade do Porto, Rua do Campo Alegre 687, 4169-007 Porto,
  Portugal}
\email{jalmeida@fc.up.pt}

\author[O. Kl\'ima]{Ond\v rej Kl\'ima}%
\address{Dept.\ of Mathematics and Statistics, Masaryk University,
  Kotl\'a\v rsk\'a 2, 611 37 Brno, Czech Republic}%
\email{klima@math.muni.cz}

\keywords{profinite semigroup, profinite unary algebra, profinite
  congruence, Polish representation}

\makeatletter
\@namedef{subjclassname@2010}{%
  \textup{2010} Mathematics Subject Classification}
\makeatother
\subjclass[2010]{08A60, 08A62, 20M07, 20M05}

\begin{abstract}
  Profinite congruences on profinite algebras determining profinite
  quotients are difficult to describe. In particular, no constructive
  description is known of the least profinite congruence containing a
  given binary relation on the algebra. On the other hand, closed
  congruences and fully invariant congruences can be described
  constructively. In a previous paper, we conjectured that fully
  invariant closed congruences on a relatively free profinite algebra
  are always profinite. Here, we show that our conjecture fails for
  unary algebras and that closed congruences on relatively free
  profinite semigroups are not necessarily profinite. As part of our
  study of unary algebras, we establish an adjunction between
  profinite unary algebras and profinite monoids. We also show that
  the Polish representation of the free profinite unary algebra is
  faithful.
\end{abstract}

\maketitle

\section{Introduction}
\label{sec:into}

Our first concern in this paper is to try to understand congruences on
profinite algebras such that the corresponding quotients are also
profinite. Following \cite{Rhodes&Steinberg:2009qt}, we call such
congruences \emph{profinite}. For instance, if one is interested in
considering profinite presentations of algebras $\langle A;
R\rangle$ (as in~\cite{Lubotzky:2001,Almeida&ACosta:2013}), one should
take on the free profinite algebra on the generating set $A$ the least
profinite congruence $\rho$ containing the relation~$R$. As in the
theory of presentations of discrete algebras, it would be useful to
have some sort of constructive description of~$\rho$. Other than
specially well-behaved cases, such as in group theory, no such
description is known in general. In all the exceptions found so far,
the reason why such a description is known is that, unlike what
happens in general, closed congruences are profinite. In the realm of
semigroup theory, many favorable examples are presented
in~\cite{Almeida&Klima:2017a}, where the aim is to find a general
method to determine when a given pseudoidentity is a consequence, in all
finite models, of a given set of pseudoidentities. Such examples are
of special type in the sense that fully invariant closed congruences
on relatively free profinite semigroups are concerned. They prompted
the authors to ask whether such congruences are always profinite even
in a more general algebraic context.

Three types of examples are given in this paper. First, an easy
example of a closed congruence on a relatively profinite semigroup
that is not profinite. Second, a modification of the first example to
make it a fully invariant closed congruence that turns out to be a
profinite congruence on a finitely generated relatively free profinite
semigroup whose quotient is countable and contains infinitely many
idempotents. Third, an example, which is again derived from the first
example, of a fully invariant closed congruence on a relatively
profinite unary algebra which is not profinite, thereby giving a
negative answer to the general case of our question raised
in~\cite{Almeida&Klima:2017a}. The latter example prompts a study of
the relationship between profinite monoids and profinite unary
algebras which leads us to exhibit a pair of adjoint functors between
the two categories.

In the general framework of universal algebra, the standard Polish
notation for terms induces a representation of the term algebra in a
free monoid, which is faithful. In the profinite setting, this leads
to a representation of the absolutely free profinite algebra in a
suitable free profinite monoid. We show that, over a nonempty
topological space of generators, such a representation is faithful if
and only if the signature is at most unary, that is, it contains no
operation symbols of arity greater than~1.

From a certain point of view, our main results and constructions
involve representations of profinite unary algebras by
appropriate profinite monoids. Each such profinite monoid appears
as the set of implicit unary operations on a given unary algebra
with a continuous multiplication given by composition. If one
would generate a clone by these operations, each new $n$-ary
operation may be identified with a unary operation of one of the
arguments. Thus, in somewhat imprecise terms, we may say that
this paper is also about profinite clones of profinite unary
algebras. However, we did not introduce the notion of profinite
clones formally, since profinite monoids are well-known and a
more suitable notion here. Notice also that in the case of a
clone on a non-unary (profinite) algebra, composition may not be
simplified into a binary operation and the behavior is much less
clear. In particular, we obtained only partial observations in
this line of investigation.

\section{Preliminaries}
\label{sec:prelims}

In this paper, we consider signatures to be sets
$\Sigma=\biguplus_{n\ge0}\Sigma_n$ graded by the natural numbers. The
set $\Sigma_n$ consists of the $n$-ary operation symbols in the
signature. The signature $\Sigma$ is a \emph{topological signature} if
each $\Sigma_n$ is a topological space; $\Sigma$ is a \emph{discrete
  signature} if each $\Sigma_n$ is a discrete space.

An \emph{algebra} of the signature $\Sigma$ is a nonempty set $S$
together with an \emph{evaluation} mapping $E_n:\Sigma_n\times S^n\to
S$ for each arity $n$; the image of the $(n+1)$-tuple
$(w,s_1,\ldots,s_n)$ is usually denoted $w_S(s_1,\ldots,s_n)$. In case
the signature is a topological signature, by a \emph{topological
  algebra} we mean an algebra $S$ endowed with a topology such that
the evaluation mappings $E_n$ are continuous. In case the topology of
the algebra is compact, we also say that the topological algebra is a
\emph{compact algebra}. All compact spaces in this paper are assumed
to be Hausdorff.

A \emph{term} is an element of an absolutely free algebra
$T_\Sigma(X)$ on a set $X$ (of \emph{variables}), which may be viewed
as an expression in the variables using operation symbols as formal
operations according to their arities. In case $X=\{x\}$, we write
$T_\Sigma(x)$ instead of $T_\Sigma(X)$. %
One often writes $\mathbf{t}=\mathbf{t}(x_1,\ldots,x_n)$ to represent
a term which involves, at most, the variables $x_1,\ldots,x_n$. For an
algebra $S$, one may define recursively for each term 
$\mathbf{t}(x_1,\ldots,x_n)$ an operation $\mathbf{t}_S:S^n\to S$ as
follows: in case $\mathbf{t}=x_i$, then $\mathbf{t}_S$ is the
projection on the $i$th component; in case
$\mathbf{t}=w(\mathbf{t}_1,\ldots,\mathbf{t}_r)$ for an operation
symbol $w$ of arity $r$, we put
$$\mathbf{t}_S(s_1,\ldots,s_n)
=w_S(\mathbf{t_1}_S(s_1,\ldots,s_n),\ldots,\mathbf{t_r}_S(s_1,\ldots,s_n)).$$

By a \emph{pseudovariety} we mean a class of finite algebras of a
fixed signature that is closed under taking homomorphic images,
subalgebras and finite direct products. For a pseudovariety \pv V, a
\emph{pro-\pv V algebra} is a compact algebra $S$ which is
residually~\pv V in the sense that, given any two points $s,s'\in S$,
there is a continuous homomorphism $\varphi:S\to T$ such that $T\in\pv
V$ and $\varphi(s)\ne\varphi(s')$. Let $\pv{Fin}_\Sigma$ denote the
pseudovariety of all finite $\Sigma$-algebras. A pro-$\pv{Fin}_\Sigma$
algebra is simply called a \emph{profinite algebra}.
Note that every profinite algebra is zero-dimensional.

In general, for two topological spaces $X$ and $T$, we denote $T^X$
the set of all functions from $X$ to $T$ and $\Cl C(X,T)$ its subset
consisting of the continuous functions. In particular, if $X$ is a
discrete space, then $\Cl C(X,T)=T^X$. Later we recall also the usual
topologies on these function spaces.

Given a topological space $X$ and a pseudovariety \pv V, there is a
\emph{free pro-\pv V algebra over $X$}, denoted \Om XV; it comes
endowed with a continuous mapping $\iota:X\to\Om XV$ (if various
pseudovarieties are present, we also denote it $\iota_{\pv V}$) and it
is characterized by the following universal property: for every
continuous mapping $\varphi:X\to S$ into a pro-\pv V algebra $S$,
there is a unique continuous homomorphism $\hat{\varphi}:\Om XV\to S$
such that the following diagram commutes:
\begin{equation}
  \label{eq:universal-free-pro-V}
  \xymatrix{
    X \ar[r]^\iota \ar[rd]_\varphi
    &
    \Om XV \ar@{-->}[d]^{\hat{\varphi}} \\
    &
    **[r]{S.}
  }
\end{equation}

In the case of a pseudovariety of semigroups \pv V, since elements of
free semigroups are usually called \emph{words}, we may refer to
elements of~\Om XV as \emph{\pv V-pseudowords on $X$} or, simply,
\emph{pseudowords}. The elements of~\Om XV that are not in the
subalgebra generated by~$X$ are said to be \emph{infinite
  pseudowords}.

There is a natural interpretation of elements of~\Om XV as (partial)
operations on a pro-\pv V algebra $S$: for each %
$u\in\Om XV$, $u_S:\Cl C(X,S)\to S$ maps each continuous function
$\varphi:X\to S$ to $\hat{\varphi}(u)$, where $\hat{\varphi}$ is the
unique continuous homomorphism of Diagram
\eqref{eq:universal-free-pro-V}. If $X$ is a finite discrete space,
which is the case that concerns us most in this paper, then $\Cl
C(X,S)=S^X$ and $u_S$ is a total operation of arity $|X|$. The
following continuity result generalizes various statements appearing
in the literature (see \cite[Subsection~2.3]{Almeida:1999c} and
\cite[Proposition~4.7]{Almeida:2003cshort}) but does not seem to have
been stated previously in its present generality. Although it may be
considered folklore, the proof is presented here for the sake of
completeness.

\begin{Thm}
  \label{t:evaluation-continuity}
  For a finite set $X$ and a pro-\pv V algebra $S$, the mapping
  \begin{align*}
    \varepsilon_{\pv V}^S:\Om XV\times S^X &\to S \\
    (u,\varphi) &\mapsto \hat{\varphi}(u)
  \end{align*}
  is continuous.
\end{Thm}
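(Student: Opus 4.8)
The plan is to reduce to the case where $S$ is finite and then read off continuity directly from the universal property. First I would use that, being a pro-\pv V algebra, $S$ is the projective limit of its finite continuous quotients: writing $\pi_i\colon S\to S_i$ for the continuous homomorphisms onto the finite algebras $S_i\in\pv V$, the induced map $S\to\prod_i S_i$ is a topological embedding onto a closed subspace. Consequently $S$ carries the initial topology determined by the family $(\pi_i)_i$, so a map into $S$ is continuous if and only if each of its composites with the $\pi_i$ is continuous. Applying this to $\varepsilon=\varepsilon_{\pv V}^S$, it suffices to prove that each $\pi_i\circ\varepsilon\colon\Om XV\times S^X\to S_i$ is continuous.

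The key computation is that postcomposing $\hat\varphi$ with $\pi_i$ turns evaluation into $S$ into evaluation into $S_i$. Indeed, $\pi_i\circ\hat\varphi\colon\Om XV\to S_i$ is a continuous homomorphism whose restriction along $\iota$ is $\pi_i\circ\varphi$; by the uniqueness clause of the universal property~\eqref{eq:universal-free-pro-V} one gets $\pi_i\circ\hat\varphi=\widehat{\pi_i\circ\varphi}$. Hence $\pi_i(\varepsilon(u,\varphi))=\widehat{\pi_i\varphi}(u)=\varepsilon_{\pv V}^{S_i}(u,\pi_i\circ\varphi)$, so $\pi_i\circ\varepsilon$ factors as $\varepsilon_{\pv V}^{S_i}\circ(\mathrm{id}\times(\pi_i)_*)$, where $(\pi_i)_*\colon S^X\to S_i^X$ sends $\psi$ to $\pi_i\circ\psi$. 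Since $X$ is finite, $(\pi_i)_*$ acts coordinatewise through the continuous map $\pi_i$ and is therefore continuous, so the problem is reduced to establishing continuity of $\varepsilon_{\pv V}^{T}$ for $T=S_i$ finite.

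Finally, for finite $T$ the space $T^X$ is finite and discrete, so $\Om XV\times T^X$ is the finite disjoint union of the clopen sets $\Om XV\times\{\psi\}$ for $\psi\in T^X$. Under the obvious homeomorphism, $\varepsilon_{\pv V}^T$ restricts on each such piece to the map $u\mapsto\hat\psi(u)$, which is precisely the continuous homomorphism $\hat\psi\colon\Om XV\to T$ furnished by the universal property. A function that is continuous on every block of a finite clopen partition is continuous, so $\varepsilon_{\pv V}^T$ is continuous, which completes the reduction and hence the proof.

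The only genuine obstacle is the first step: one must be certain that $S$ really does carry the initial topology with respect to its finite continuous quotients, that is, that continuity into $S$ may be tested quotient by quotient. This is exactly where the compactness of $S$ (so that the injection into the Hausdorff product $\prod_i S_i$ is a closed embedding) and the residual-\pv V hypothesis (so that this map is injective) are used. Once this is granted, everything else is formal, the finite case being immediate from the defining universal property of \Om XV.
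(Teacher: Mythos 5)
Your proof is correct, and it follows the same overall strategy as the paper---reduce continuity of $\varepsilon_{\pv V}^S$ to a computation over finite quotients of~$S$---but the reduction is packaged differently at two points. The paper fixes a clopen $K\subseteq S$, invokes \cite[Theorem~3.6.1]{Almeida:1994a} to write $K=h^{-1}\bigl(h(K)\bigr)$ for a continuous homomorphism $h$ onto a finite $T\in\pv V$, and then also projects the first coordinate onto the finite free algebra $\Om XW$ for \pv W the pseudovariety generated by~$T$, so that the whole domain $\Om XW\times T^X$ becomes finite discrete and only the continuity of $h\circ\_:S^X\to T^X$ remains to be observed. You instead test continuity by postcomposing with all projections $\pi_i$ onto finite quotients, which is legitimate because the compact Hausdorff space $S$ embeds closedly into $\prod_i S_i$ and therefore carries the initial topology for the family $(\pi_i)_i$; and in the finite case you leave $\Om XV$ untouched and partition $\Om XV\times T^X$ into the finitely many clopen slices $\Om XV\times\{\psi\}$, on each of which the map is the continuous homomorphism $\hat\psi$. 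The central identity $\pi_i\circ\hat\varphi=\widehat{\pi_i\circ\varphi}$ is exactly the uniqueness argument by which the paper establishes the commutativity of Diagram~\eqref{eq:evaluation-diagram} (in the special case $\pv W=\pv V$). What your route buys is the avoidance of the recognizability theorem for clopen sets, replaced by the equally standard embedding-into-the-product fact---which, as you rightly flag, is the one place where compactness and residual finiteness in~\pv V are genuinely used; both versions ultimately rest on $X$ being finite, so that $S^X\to T^X$ is coordinatewise continuous and $T^X$ is finite discrete.
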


\begin{proof}
  Let \pv W be a pseudovariety contained in~\pv V and denote by %
  $\pi_{\pv W}=\widehat{\iota_{\pv W}}$ the unique continuous
  homomorphism $\Om XV\to\Om XW$ such that the diagram
  \begin{displaymath}
    \xymatrix{
      X \ar[r]^{\iota_{\pv V}} \ar[rd]_{\iota_{\pv W}}
      & \Om XV \ar[d]^{\pi_{\pv W}} \\
      & \Om XW
    }
  \end{displaymath}
  commutes. Let $h:S\to T$ be a continuous homomorphism where $S$ and
  $T$ are, respectively pro-\pv V and pro-\pv W algebras. The mapping
  $h$ induces the following diagram:
  \begin{equation}
    \label{eq:evaluation-diagram}
    \xymatrix@C=15mm{
      \Om XV\times S^X
      \ar[r]^(.6){\varepsilon_{\pv V}^S}
      \ar[d]^{\pi_{\pv W}\times(h\circ\_)} &
      S
      \ar[d]^h
      \\
      \Om XW\times T^X
      \ar[r]^(.6){\varepsilon_{\pv W}^T}
      &
      **[r]{T.}
    }
  \end{equation}
  We claim that it commutes. Indeed, given $\varphi\in S^X$, we may
  build the following diagram:
  \begin{displaymath}
    \xymatrix@R=4mm@C=4mm{
      \Om XV
      \ar[rr]^{\pi_{\pv W}}
      \ar[dd]_{\hat{\varphi}}
      &&
      \Om XW
      \ar[dd]^{\widehat{h\circ\varphi}}
      \\
      &
      X
      \ar[lu]^{\iota_{\pv V}}
      \ar[ld]^\varphi
      \ar[ru]_{\iota_{\pv W}}
      &\\
      S
      \ar[rr]_h
      &&
      **[r]{T.}
    }
  \end{displaymath}
  The left, upper, and lower triangles commute by the definition,
  respectively, of $\hat{\varphi}$, $\pi_{\pv W}$, and
  $\widehat{h\circ\varphi}$. It follows that the whole diagram
  commutes. Hence, given $u\in\Om XV$, we obtain the following
  equalities
  \begin{displaymath}
    h\bigl(\varepsilon_{\pv V}^S(u,\varphi)\bigr)
    =h\bigl(\hat{\varphi}(u)\bigr)
    =\widehat{h\circ\varphi}\bigl(\pi_{\pv W}(u)\bigr)
    =\varepsilon_{\pv W}^T\bigl(\pi_{\pv W}(u),h\circ\varphi\bigr),
  \end{displaymath}
  thereby showing that Diagram~\eqref{eq:evaluation-diagram} commutes.

  Now, since $S$ is zero-dimensional, to prove that the mapping
  $\varepsilon_{\pv V}^S$ is continuous, it suffices to show that, for
  every clopen subset $K$ of~$S$, the set %
  $(\varepsilon_{\pv V}^S)^{-1}(K)$ is open in the product space %
  $\Om XV\times S^X$. By \cite[Theorem~3.6.1]{Almeida:1994a}, there is
  a continuous homomorphism $h:S\to T$ to an algebra $T$ from~\pv V
  such that $K=h^{-1}\bigl(h(K)\bigr)$. Let \pv W be the pseudovariety
  generated by~$T$ and consider the corresponding commutative
  Diagram~\eqref{eq:evaluation-diagram}. Then the following equalities
  hold:
  \begin{displaymath}
    (\varepsilon_{\pv V}^S)^{-1}(K)
    =(\varepsilon_{\pv V}^S)^{-1}\Bigl(h^{-1}\bigl(h(K)\bigr)\Bigr)
    =\bigl(\pi_{\pv W}\times(h\circ\_)\bigr)^{-1}
    \Bigl((\varepsilon_{\pv W}^T)^{-1}\bigl(h(K)\bigr)\Bigr).
  \end{displaymath}
  Since $\Om XW\times T^X$ is a finite discrete space, the continuity
  of $\varepsilon_{\pv V}^S$ reduces to that of the mapping %
  $h\circ\_ :S^X\to T^X$, which is just $h$ on
  each component, whence continuous.
\end{proof}

The following are immediate consequences of
Theorem~\ref{t:evaluation-continuity} that we record here for later
reference.

\begin{Cor} 
  \label{c:implops-continuous}
  Let $S$ be a pro-\pv V algebra and $X$ a finite set. Then, for every
  $w\in \Om XV$, the mapping $w_S:S^X\to S$ defined by
  $w_S(\varphi)=\hat{\varphi}(w)$ is continuous.\qed
\end{Cor}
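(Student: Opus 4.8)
The plan is to derive Corollary \ref{c:implops-continuous} as a direct specialization of Theorem \ref{t:evaluation-continuity}, since the corollary is essentially the statement that each "slice" of the jointly continuous evaluation map is continuous. Recall that the theorem establishes continuity of the full evaluation map $\varepsilon_{\pv V}^S : \Om XV \times S^X \to S$ sending $(u,\varphi)$ to $\hat\varphi(u)$, while the corollary fixes the first coordinate at a specific pseudoword $w \in \Om XV$ and asks for continuity of the resulting partial map $w_S : S^X \to S$, $\varphi \mapsto \hat\varphi(w)$.

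The key observation is that $w_S$ factors through $\varepsilon_{\pv V}^S$ via the inclusion of a fiber. First I would define the map $\jmath_w : S^X \to \Om XV \times S^X$ by $\jmath_w(\varphi) = (w,\varphi)$, i.e.\ pairing with the constant value $w$ in the first coordinate. This map is continuous, since its first component is constant and its second component is the identity on $S^X$, and a map into a product is continuous precisely when each of its component maps is continuous. Then one checks the factorization $w_S = \varepsilon_{\pv V}^S \circ \jmath_w$ directly on elements: for any $\varphi \in S^X$,
\begin{displaymath}
  \bigl(\varepsilon_{\pv V}^S \circ \jmath_w\bigr)(\varphi)
  = \varepsilon_{\pv V}^S(w,\varphi)
  = \hat\varphi(w)
  = w_S(\varphi).
\end{displaymath}

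Finally, $w_S$ is continuous as a composite of continuous maps: $\jmath_w$ is continuous by the preceding paragraph and $\varepsilon_{\pv V}^S$ is continuous by Theorem \ref{t:evaluation-continuity}. This completes the argument. I do not anticipate any genuine obstacle here, as the corollary is a routine instance of the general principle that partial applications of a jointly continuous map are continuous; the only point requiring care is to confirm that the domain $S^X$ carries the appropriate product topology so that the inclusion $\jmath_w$ is indeed continuous, which is immediate since $X$ is finite and $S^X$ is the finite product of copies of $S$.
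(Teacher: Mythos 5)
Your proposal is correct and matches the paper's intent exactly: the paper marks this corollary as an immediate consequence of Theorem~\ref{t:evaluation-continuity} (with no written proof), and the intended argument is precisely the restriction of the jointly continuous evaluation map $\varepsilon_{\pv V}^S$ to the slice $\{w\}\times S^X$, which is what your factorization $w_S=\varepsilon_{\pv V}^S\circ\jmath_w$ makes explicit.
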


\begin{Cor}
  \label{c:implops}
  Let $h :S\to T$ be a continuous homomorphism between pro-\pv V
  algebras and $X$ a finite set. Then the following diagram commutes
  for each $w\in\Om XV$:
  \begin{displaymath}
    \xymatrix{
      S^X \ar[r]^{w_S} \ar[d]^{h\circ\_}
      &
      S \ar[d]^h
      \\
      T^X \ar[r]^{w_T}
      &
      **[r]{T.}
    }
  \end{displaymath}
\end{Cor}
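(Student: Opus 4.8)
The plan is to unwind the definitions and reduce the statement to the uniqueness clause of the universal property of $\Om XV$. Commutativity of the displayed square means precisely that, for every $\varphi\in S^X$, we have $h\bigl(w_S(\varphi)\bigr)=w_T(h\circ\varphi)$. By the definitions of $w_S$ and $w_T$ (via Corollary~\ref{c:implops-continuous}), this is the equality $h\bigl(\hat\varphi(w)\bigr)=\widehat{h\circ\varphi}(w)$, where $\hat\varphi\colon\Om XV\to S$ and $\widehat{h\circ\varphi}\colon\Om XV\to T$ are the continuous homomorphisms furnished by Diagram~\eqref{eq:universal-free-pro-V}. Since this is to hold for every $w\in\Om XV$, it suffices to establish the equality of continuous homomorphisms $h\circ\hat\varphi=\widehat{h\circ\varphi}\colon\Om XV\to T$.

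Both of these are continuous homomorphisms into the pro-\pv V algebra $T$, so by the uniqueness part of the universal property it is enough to check that they agree after precomposition with $\iota\colon X\to\Om XV$. For each $x\in X$ one computes $(h\circ\hat\varphi)\bigl(\iota(x)\bigr)=h\bigl(\varphi(x)\bigr)$, using $\hat\varphi\circ\iota=\varphi$, and $\widehat{h\circ\varphi}\bigl(\iota(x)\bigr)=(h\circ\varphi)(x)=h\bigl(\varphi(x)\bigr)$, using $\widehat{h\circ\varphi}\circ\iota=h\circ\varphi$. The two values coincide, so the two homomorphisms agree on the image of $\iota$ and hence are equal.

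In fact this is nothing more than the special case $\pv W=\pv V$ of the auxiliary commutative diagram established in the proof of Theorem~\ref{t:evaluation-continuity}: when $\pv W=\pv V$, the comparison map $\pi_{\pv W}$ is the identity on $\Om XV$, and the outer square of that diagram collapses to exactly $h\circ\hat\varphi=\widehat{h\circ\varphi}$. There is essentially no obstacle in the argument; the only point requiring care is to invoke uniqueness against the correct target, namely $T$, which is legitimate precisely because the hypothesis places both $S$ and $T$ among the pro-\pv V algebras, so that $\widehat{h\circ\varphi}$ is defined and unique by the same universal property.
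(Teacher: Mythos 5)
Your proof is correct and is essentially the paper's argument: the paper proves the corollary by citing the commutativity of Diagram~\eqref{eq:evaluation-diagram} with $\pv W=\pv V$ and $\pi_{\pv W}$ the identity, and the identity $h\circ\hat\varphi=\widehat{h\circ\varphi}$ you establish via the uniqueness clause of the universal property is exactly the content of that diagram in this special case, as you yourself note in your final paragraph.
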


\begin{proof}
  This follows from the commutativity of
  Diagram~\eqref{eq:evaluation-diagram}
  where we consider $\pv W=\pv V$ and $\pi_{\pv W}$ is 
  the identity mapping.
\end{proof}

In particular, a pro-\pv V algebra may be viewed as a natural
topological $\Sigma$-algebra for the signature
$\Sigma=\bigcup_{n=1}^\infty\Om{X_n}V$, where $\{x_1,x_2,\ldots\}$ is
a countable set of distinct variables and $X_n=\{x_1,\ldots,x_n\}$.
Note that indeed the value of an $n$-ary operation
depends continuously jointly on the $n$ arguments and the operation
itself. In particular, taking the pro-\pv V algebra to be any of the
free pro-\pv V algebras \Om{X_n}V, we see that $\Sigma$ is a clone.

An equivalence relation on a topological space $X$ is said to be
\emph{closed} (respectively \emph{clopen}) if it is a subset of
$X\times X$ with that property. In particular, it is well known that
the equality relation $\Delta_X$ on $X$ is closed if and only if $X$
is Hausdorff. A congruence $\theta$ on a topological algebra $S$ is
\emph{profinite} if $S/\theta$ is a profinite algebra for the quotient
algebraic and topological structures.

Sometimes, it is useful to consider presentations in the profinite
context. Given a pseudovariety \pv V, a set $X$, and a binary relation
$R\subseteq(\Om XV)^2$, the pro-\pv V algebra \emph{presented} by
$\langle X; R\rangle$ is the largest quotient $\Om XV/\theta$ where
$\theta$ is a profinite congruence containing~$R$. So, we are
interested in determining the smallest profinite congruence on \Om XV
that contains $R$; note that it is the intersection of all clopen
congruences containing $R$.
    
On the other hand, one may describe ``constructively'' the smallest
closed congruence on~\Om XV containing $R$ by applying the following
natural ``procedure'' considered in~\cite{Almeida&Klima:2017a}:
\begin{itemize}
\item start with the set
  \begin{align*}
    \Delta_{\Om XV}\cup
    &\left\{\bigl(\mathbf{t}_{\Om XV}(a,b_1,\ldots,b_n), %
      \mathbf{t}_{\Om XV}(a',b_1,\ldots,b_n)\bigr):\right.\\
    &\qquad\mathbf{t}(x_1,\ldots,x_{n+1}) \text{ a term},
      (a,a')\in R\cup R^{-1}, b_i\in\Om XV\Big\};
  \end{align*}
\item  alternate successively transitive closure
  and topological closure;
\item repeat transfinitely until the relation obtained stabilizes.
\end{itemize}
If a fully invariant congruence is sought, it suffices to replace the
occurrences of $a$ and $a'$ in the evaluation of the term operations
in the above procedure by $\varphi(a)$ and $\varphi(a')$,
respectively, where $\varphi$ runs over the monoid of continuous
endomorphisms of~\Om XV. As argued in~\cite{Almeida&Klima:2017a}, the
profiniteness of the smallest fully invariant closed congruence
containing $R$ may be viewed as a sort of completeness theorem in the
proof theory for pseudoidentities. Unlike the expectation expressed in
that paper, we show in this paper that such a completeness theorem
does not hold in full generality.

It should be noted that whether a closed congruence on a profinite
algebra is profinite is a purely topological property. Indeed,
Gehrke~\cite[Theorem~4.3]{Gehrke:2016a} proved that a quotient of a
profinite algebra is profinite if and only if it is zero-dimensional.

For a profinite semigroup $S$, we denote $S^I$ the semigroup obtained
from $S$ by adding a new identity element, even if $S$ already has
one, which is an isolated point. Note that $S^I$ is again a profinite
semigroup. Moreover, if $\varphi:S\to T$ is a continuous homomorphism
between profinite semigroups, then so is its extension %
$\varphi^I:S^I\to T^I$ sending the new identity element of $S^I$ to
the new identity element of~$T^I$.

Given an element $s$ of a profinite semigroup $S$, the sequence
$(s^{n!})_n$ converges to an idempotent, denoted $s^\omega$; this
follows from the simple combinatorial fact that, if the semigroup is
finite, then the sequence in question is eventually constant with
idempotent value.

For a positive integer $n$,
the pseudovariety $\pv K_n$ consists of all finite semigroups
satisfying the identity $x_1\cdots x_ny=x_1\cdots x_n$. The union of
the ascending chain of pseudovarieties $\pv K_n$ is denoted \pv K. For
a finite set $A$, the structure of both $\Om AK_n$ and \Om AK is well
known and quite transparent: the former consists of all words in the
letters of~$A$ of length at most $n$ where the product of two such
words is the longest prefix of their concatenation of length at
most~$n$; the latter is realized as the set of all finite words
$a_1\ldots a_m$ together with all right infinite words $a_1a_2\ldots$,
with each $a_i$ in~$A$, where right infinite words are left zeros and
otherwise multiplication is obtained by concatenation. By the
\emph{content} of a finite or infinite word we mean the set of letters
that appear in it.

\section{A non-profinite closed congruence}
\label{sec:non-profinite-closed}

Our starting observation is the well-known fact that there is a
continuous mapping $C\to[0,1]$ from the Cantor set $C$ onto the
interval $[0,1]$ of real numbers. One simple description of such a
mapping $\varphi$ is obtained by first writing each element $s$ of the
Cantor set as a ternary expansion $s=\sum_{n=1}^\infty a_n/3^n$ with
$a_n\in\{0,2\}$, which is identified with the infinite word
$a_1a_2\cdots$; replacing the digit 2 by 1 and viewing the result as a
binary expansion $\sum_{n=1}^\infty a_n/2^{n+1}$ of a real number, we
obtain the image $\varphi(s)$. Because of the ambiguity in binary
expansions resulting from the equality $\sum_{n=1}^\infty1/2^n=1$, we
see that $\varphi$ identifies two elements of $C$, viewed as infinite
words on the alphabet $\{0,2\}$, if and only if they constitute a pair
of the form $w02^\infty,w20^\infty$.

The above observations suggest considering the closed congruence
$\theta$ on the profinite semigroup \Om AK generated by the pair
$(ab^\omega,ba^\omega)$, where $A=\{a,b\}$.

\begin{Lemma}
  \label{l:theta}
  The closed congruence $\theta$ is obtained at the first step of the
  procedure described in Section~\ref{sec:prelims}, that is, it
  consists of the diagonal $\Delta_{\Om AK}$ plus all pairs of the
  form $(wab^\omega,wba^\omega)$ together with
  $(wba^\omega,wab^\omega)$, with $w\in A^*$.
\end{Lemma}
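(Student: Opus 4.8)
The plan is to show that the very first set produced by the procedure of Section~\ref{sec:prelims}, applied to $R=\{(ab^\omega,ba^\omega)\}$, is already a closed congruence; then the procedure stabilises at once and that set must be $\theta$.

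First I would compute the initial set explicitly from the concrete description of \Om AK. Here $ab^\omega$ and $ba^\omega$ are the right infinite words $ab^\infty$ and $ba^\infty$, hence left zeros. Thus, substituting one of them for $x_1$ in a term $\mathbf{t}(x_1,\dots,x_{n+1})$ yields the product of the arguments substituted for the variables preceding the first occurrence of $x_1$, followed by that left zero; in particular the whole right context is absorbed. Writing $u$ for that preceding product, one gets $wab^\omega$ resp.\ $wba^\omega$ when $u$ is a finite word $w\in A^*$ (including $u$ empty), and one gets the diagonal pair $(u,u)$ when $u$ is infinite. Hence the initial set is exactly $\Delta_{\Om AK}\cup\{(wab^\omega,wba^\omega),(wba^\omega,wab^\omega):w\in A^*\}$, the relation claimed.

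Next I would check the algebraic properties. Symmetry is explicit. For the equivalence property I would note that each $wab^\omega$ determines $w$ uniquely, as $w$ is the prefix preceding the last non-$b$ position, and likewise $wba^\omega$ determines $w$; since a word ending in $b^\infty$ never equals one ending in $a^\infty$, the nondiagonal classes are exactly the two element sets $\{wab^\omega,wba^\omega\}$ and everything else is a singleton, so transitivity is immediate. For compatibility, right multiplication fixes these left zeros, left multiplication by a finite word $w'$ sends $wab^\omega$ to $w'wab^\omega$ (preserving the form), and left multiplication by an infinite word collapses both coordinates to that word, i.e.\ onto the diagonal.

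The step where the topology really enters, and the main point, is closedness. Using compactness and metrisability of \Om AK I would take a convergent sequence in the relation and pass to a subsequence of one fixed type, say $(w_nab^\omega,w_nba^\omega)$. If the lengths $|w_n|$ are bounded, a subsequence is constant and the limit is one of the listed pairs. If $|w_n|\to\infty$, a convergent subsequence of the $w_n$ tends to an infinite word $\gamma$; since $\gamma$ is a left zero, continuity of the multiplication gives $w_nab^\omega\to\gamma\cdot ab^\omega=\gamma$ and $w_nba^\omega\to\gamma\cdot ba^\omega=\gamma$, so the limit lies on the diagonal. Thus the relation is closed. Finally, being a closed congruence containing $R$, it contains $\theta$ by minimality; and each of its pairs arises by a term substitution from a pair of $R\cup R^{-1}$, hence lies in $\theta$. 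Therefore it equals $\theta$, which is the assertion.
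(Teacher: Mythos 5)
Your proposal is correct and follows essentially the same route as the paper: show that the explicitly described relation is already a closed congruence (using that $ab^\omega$ and $ba^\omega$ are left zeros, that non-diagonal classes are the two-element sets $\{wab^\omega,wba^\omega\}$, and a compactness argument for closedness), so the procedure stabilises at the first step. The paper's own proof is just a terser version of this, asserting without detail the closedness and stability properties that you verify explicitly.
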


\begin{proof}
  It suffices to show that the relation described in the statement of
  the lemma is a closed congruence. It is clearly closed and a
  reflexive and symmetric binary relation that is stable under both
  left and right multiplication by the same element. Since each
  element is related with at most one element different from it, the
  relation is also transitive.
\end{proof}

The following lemma captures a key property that holds in finite
quotients of $\Om AK/\theta$.

\begin{Lemma}
  \label{l:K-closed-cong}
  If $S\in\pv K_n$ and $s,t$ are elements of $S$ such that
  $st^\omega=ts^\omega$, then the subsemigroup $\langle s,t\rangle$ is
  nilpotent.
\end{Lemma}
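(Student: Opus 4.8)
The plan is to reduce everything to the combinatorics of words in $s,t$, exploiting the fact that in $\pv K_n$ every product of at least $n$ factors is a left zero and coincides with its prefix of length $n$. First I would record the basic facts about $\pv K_n$: for any $u\in S$ one has $u^\omega=u^n$, and whenever $m\ge n$ a product $u_1\cdots u_m$ equals $u_1\cdots u_n$, since $u_1\cdots u_n$ is a left zero by the defining identity $x_1\cdots x_ny=x_1\cdots x_n$. In particular the left zeros of $S$ form a left-zero band, and any product of length $\ge n$ in $s,t$ is determined by its first $n$ letters. Using this I would rewrite the hypothesis: as $t^\omega=t^n$, we get $st^\omega=st^n=st^{n-1}$ and likewise $ts^\omega=ts^{n-1}$, so $st^\omega=ts^\omega$ is equivalent to the single relation $st^{n-1}=ts^{n-1}$ between two words of length $n$, which I will call $R$.

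Next I would turn $R$ into a whole family of relations by left multiplication. Left-multiplying a length-$n$ word $u_1\cdots u_n$ by a word $P$ of length $k$ and reducing to the length-$n$ prefix sends it to $Pu_1\cdots u_{n-k}$. Applying this to both sides of $R$ yields, for every $0\le k\le n-1$ and every word $P$ of length $k$, the equality $Pst^{\,n-1-k}=Pts^{\,n-1-k}$ of length-$n$ words. The extreme case $k=n-1$ states that the last letter of a length-$n$ word is irrelevant, i.e.\ $Ps=Pt$ for all $|P|=n-1$.

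The heart of the argument is to show that this family collapses all length-$n$ words to a single value. I would prove by downward induction on $j$ the statement $N_j$: any two length-$n$ words agreeing in their first $j$ letters are equal in $S$. The base case $N_{n-1}$ is exactly the relation $Ps=Pt$. For the inductive step $N_j\Rightarrow N_{j-1}$, given words agreeing on the first $j-1$ letters but differing at position $j$, I would write them as $PsA$ and $PtB$ with $|P|=j-1$; by $N_j$ each equals its canonical form $Pst^{\,n-j}$, respectively $Pts^{\,n-j}$, and the depth-$(j-1)$ relation with prefix $P$ identifies these two, giving equality. Then $N_0$ asserts that all length-$n$ words equal a common element $z$.

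Finally I would conclude. Since every element of $\langle s,t\rangle$ is a nonempty word in $s,t$, any product of $n$ of them has length $\ge n$ and hence equals $z$; moreover $z$ is a two-sided zero, because $zx=z$ as $z$ is a left zero, while $xz$ has length $\ge n$ and so also equals $z$. Thus $\langle s,t\rangle$ has zero $z$ with $\langle s,t\rangle^{\,n}=\{z\}$, which is precisely nilpotency. The main obstacle is the collapsing step: verifying that the single relation $R$, once propagated by left multiplication and prefix reduction, genuinely forces all length-$n$ words together. The downward induction on the length of agreement is what keeps this organized, rather than relying on an ad hoc chain of rewrites.
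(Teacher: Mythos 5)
Your proof is correct and follows essentially the same route as the paper's: both arguments show that all products of $n$ factors from $\{s,t\}$ coincide by changing one letter at a time, your downward induction on the length of the agreed prefix being the mirror image of the paper's induction on the number of factors to the right of the modified position. The only (cosmetic) difference is that you first replace $st^\omega=ts^\omega$ by the finite relation $st^{n-1}=ts^{n-1}$ and work with words of length $n$, whereas the paper keeps the $\omega$-powers as absorbing tails throughout.
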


\begin{proof}
  To prove the lemma, it suffices to establish the claim that
  $usv=utv$ whenever $u,v$ are products of factors $s$ and $t$ with a
  total number of factors for $uv$ equal to $n-1$. To prove it, we
  proceed by induction on the number $k$ of factors in~$v$. If $k=0$,
  then we have $us=ust^\omega=uts^\omega=ut$. Assuming that $v=sv'$ and that
  the claim holds for right factors which are products of $k-1$ factors $s$ and $t$,
  we obtain
  \begin{displaymath}
    usv %
    =ussv'%
    =usst^\omega %
    =usts^\omega %
    =ustt^\omega %
    =uts^\omega %
    =utsv'=utv
  \end{displaymath}
  and similarly for $v=tv'$.
\end{proof}

\begin{Cor}
  \label{c:K-closed-cong-1}
  If $S\in\pv K$, $s,t\in S$, and $st^\omega=ts^\omega$, then the
  equality $s^\omega=t^\omega$ holds.\qed
\end{Cor}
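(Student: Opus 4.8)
The plan is to deduce Corollary~\ref{c:K-closed-cong-1} from Lemma~\ref{l:K-closed-cong} by reducing the statement about $S\in\pv K$ to a statement about the finite members of~\pv K. Since $\pv K=\bigcup_n\pv K_n$, every profinite semigroup in~\pv K is a projective limit of its finite quotients in~\pv K, each of which lies in some~$\pv K_n$. First I would observe that both sides of the desired equality $s^\omega=t^\omega$ are built from $s$ and $t$ using the continuous operations of taking products and $\omega$-powers, all of which commute with continuous homomorphisms; the same is true of the hypothesis $st^\omega=ts^\omega$. So it suffices to prove the equality in each finite quotient and then pass to the limit.

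Concretely, I would argue as follows. Let $\psi:S\to F$ be any continuous homomorphism onto a finite semigroup $F\in\pv K$, so that $F\in\pv K_n$ for some~$n$. Writing $s'=\psi(s)$ and $t'=\psi(t)$, continuity of the $\omega$-power (recorded in the Preliminaries via the convergence of $(x^{k!})_k$ to $x^\omega$) gives $\psi(s^\omega)=(s')^\omega$ and similarly for~$t$, and hence $s'(t')^\omega=\psi(st^\omega)=\psi(ts^\omega)=t'(s')^\omega$. By Lemma~\ref{l:K-closed-cong}, the subsemigroup $\langle s',t'\rangle$ of~$F$ is nilpotent. In a nilpotent semigroup there is a single idempotent, namely the zero, and every element has that zero as its unique idempotent power; in particular $(s')^\omega=(t')^\omega$. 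Therefore $\psi(s^\omega)=\psi(t^\omega)$.

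Since $S\in\pv K$ is residually~\pv K, the points $s^\omega$ and $t^\omega$ are separated by continuous homomorphisms onto finite members of~\pv K; as every such homomorphism identifies $s^\omega$ and $t^\omega$ by the previous paragraph, we conclude $s^\omega=t^\omega$. The only point needing a little care, and thus the one I would regard as the main (minor) obstacle, is the step from Lemma~\ref{l:K-closed-cong} to $(s')^\omega=(t')^\omega$: Lemma~\ref{l:K-closed-cong} yields nilpotency of $\langle s',t'\rangle$, and one must note that a finite nilpotent semigroup has a unique idempotent which is its zero, so that $(s')^\omega$ and $(t')^\omega$ both equal that zero. This is exactly why the corollary is stated for~\pv K rather than a single~$\pv K_n$: the conclusion $s^\omega=t^\omega$ is an equality of idempotents that survives the projective limit, even though the finite quotients have varying nilpotency degrees.
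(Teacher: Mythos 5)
Your proof is correct, and its essential content is exactly what the paper intends: apply Lemma~\ref{l:K-closed-cong} to get that $\langle s,t\rangle$ is nilpotent, and observe that a finite nilpotent semigroup has a unique idempotent, namely its zero, so $s^\omega=t^\omega$. That single observation is the whole reason the paper marks the corollary as immediate. The extra layer you build around it --- projective limits, passing to finite quotients, and separating $s^\omega$ from $t^\omega$ by residual finiteness --- is logically sound but rests on a misreading of the hypothesis: in this paper a pseudovariety is by definition a class of \emph{finite} algebras, so $S\in\pv K$ already means that $S$ is a finite semigroup lying in $\pv K_n$ for some $n$, and Lemma~\ref{l:K-closed-cong} applies to $S$ directly with no quotients or limits needed. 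Consequently your closing explanation of ``why the corollary is stated for $\pv K$ rather than a single $\pv K_n$'' is off the mark: the point is merely that $n$ is not fixed in advance, not that an equality of idempotents must survive a projective limit. What your more elaborate argument buys, incidentally, is a genuinely stronger statement --- the same conclusion for arbitrary pro-$\pv K$ semigroups --- which is in the spirit of how Corollary~\ref{c:K-closed-cong-2} uses the result, but it is not what the corollary asserts and is not needed there either, since the homomorphism $\psi$ in that proof already lands in a finite member of~$\pv K$.
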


We may now easily obtain the following result without referring to the
topology of the real numbers.

\begin{Cor}
  \label{c:K-closed-cong-2}
  The quotient topological semigroup $\Om AK/\theta$ is not profinite.
\end{Cor}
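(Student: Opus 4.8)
The plan is to show that the quotient $\Om AK/\theta$ fails to be profinite by exhibiting a violation of residual finiteness, using Corollary~\ref{c:K-closed-cong-1} as the decisive tool. By Gehrke's theorem, it would suffice to prove that the quotient is not zero-dimensional, but I expect a cleaner route through the residual finiteness definition directly. Concretely, I want to find two distinct points of $\Om AK/\theta$ that cannot be separated by any continuous homomorphism into a finite semigroup, which immediately shows the quotient is not residually finite, hence not profinite.

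First I would identify natural candidate points. The obvious choice is the pair of classes $[a^\omega]_\theta$ and $[b^\omega]_\theta$. By Lemma~\ref{l:theta}, the congruence $\theta$ only identifies pairs of the form $wab^\omega,wba^\omega$ together with the diagonal, so neither $a^\omega$ nor $b^\omega$ is $\theta$-related to anything other than itself; in particular $a^\omega\not\mathrel\theta b^\omega$, so these are genuinely distinct points of the quotient. Next I would suppose, for contradiction, that there is a continuous homomorphism $\psi:\Om AK/\theta\to F$ with $F\in\pv{Fin}$ and $\psi([a^\omega]_\theta)\ne\psi([b^\omega]_\theta)$. Composing with the quotient map gives a continuous homomorphism $\Phi:\Om AK\to F$ onto a finite semigroup that respects $\theta$, so its image lies in some $\pv K$ (indeed any finite continuous quotient of $\Om AK$ lies in $\pv K$).

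The heart of the argument is then to apply the finite-quotient property. Writing $s=\Phi(a)$ and $t=\Phi(b)$ in the finite semigroup $F\in\pv K$, the fact that $\Phi$ factors through $\theta$ forces $\Phi(ab^\omega)=\Phi(ba^\omega)$. Since $\Phi$ is a continuous homomorphism and $b^\omega,a^\omega$ are the $\omega$-powers in $\Om AK$, continuity gives $\Phi(b^\omega)=t^\omega$ and $\Phi(a^\omega)=s^\omega$, whence $st^\omega=ts^\omega$ in $F$. Corollary~\ref{c:K-closed-cong-1} now yields $s^\omega=t^\omega$, that is, $\Phi(a^\omega)=\Phi(b^\omega)$. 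This contradicts $\psi([a^\omega]_\theta)\ne\psi([b^\omega]_\theta)$, since $\Phi=\psi\circ({}\cdot{})$ sends $a^\omega$ and $b^\omega$ to $\psi([a^\omega]_\theta)$ and $\psi([b^\omega]_\theta)$ respectively. Therefore no finite continuous quotient separates $[a^\omega]_\theta$ from $[b^\omega]_\theta$, so $\Om AK/\theta$ is not residually finite and hence not profinite.

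The main obstacle I anticipate is the bookkeeping around continuity of the $\omega$-power under $\Phi$ and the precise claim that every finite continuous quotient of $\Om AK$ lies in $\pv K$; both are routine given the structure of $\Om AK$ recalled in Section~\ref{sec:prelims}, but they must be invoked cleanly so that Corollary~\ref{c:K-closed-cong-1} applies with $S=F\in\pv K$. Everything else is a direct translation of the separation failure into the language of residual finiteness, and the distinctness of the two chosen points follows immediately from the explicit description of $\theta$ in Lemma~\ref{l:theta}.
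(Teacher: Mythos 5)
Your argument is correct and is essentially the paper's own proof: both reduce to the observation that a finite continuous quotient of \Om AK separating $a^\omega$ from $b^\omega$ while identifying $ab^\omega$ with $ba^\omega$ would contradict Corollary~\ref{c:K-closed-cong-1}, using Lemma~\ref{l:theta} to see that $a^\omega$ and $b^\omega$ are not $\theta$-equivalent. The extra details you spell out (continuity of $\omega$-powers under $\Phi$ and the fact that finite continuous quotients of \Om AK lie in~\pv K) are exactly the routine facts the paper leaves implicit.
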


\begin{proof}
  If it were profinite, since $a^\omega$ and $b^\omega$ are not
  $\theta$-equivalent by Lemma~\ref{l:theta}, there would be a
  continuous homomorphism $\psi:\Om AK\to S$ onto a semigroup from~\pv
  K such that $\psi(ab^\omega)=\psi(ba^\omega)$ and
  $\psi(a^\omega)\ne\psi(b^\omega)$. This is impossible by
  Corollary~\ref{c:K-closed-cong-1}.
\end{proof}

Note that the minimum ideal $I$ of~\Om AK is homeomorphic with the
Cantor set and carries a multiplication in which every element is a
left-zero. By the considerations at the beginning of the section, the
restriction $\theta'$ of the congruence $\theta$ to~$I$ is such that
the quotient topological semigroup $I/\theta'$ is homeomorphic with the
unit real interval $[0,1]$, carrying also the left-zero
multiplication. Of course, $\theta'$ is an easier example of a closed
congruence on a profinite semigroup that is not profinite as no
algebraic considerations are necessary to justify this statement. Yet,
the example in this section has two advantages that justify
considering it: the profinite semigroup \Om AK is finitely generated
and the arguments are essentially algebraic.

\section{A finitely generated countable profinite semigroup with
  infinitely many idempotents}
\label{sec:K}

In~\cite{Almeida&Klima:2019b}, we have investigated in the realm of
semigroup theory the property of a pseudovariety \pv V which consists
in all finitely generated pro-\pv V algebras to be countable; such a
pseudovariety \pv V is said to be \emph{locally countable}. There, we
have shown that countable finitely generated profinite semigroups with
finitely many idempotents are rather well behaved. In particular, they
are algebraically generated by the given finite set of generators
together with the idempotents. Thus, it is natural to ask whether the
property of having only finitely many idempotents always holds for a
countable finitely generated profinite semigroup. In this section, we
present an example of a locally countable pseudovariety whose
relatively free profinite semigroups on at least two generators have
infinitely many idempotents.

The example in this section came about in an attempt to find a fully
invariant closed congruence on a relatively free profinite semigroup
that is not a profinite congruence; the existence of such an example
remains an open problem. The starting point is the congruence of
Section~\ref{sec:non-profinite-closed} but we need to suitably modify
it to obtain a fully invariant closed congruence. While the example
turns out not to be a profinite congruence, it still enjoys properties
that justify considering it here. It also illustrates how difficult it
seems to address the conjecture of~\cite{Almeida&Klima:2017a} for
semigroups.

Let $A=\{a,b\}$ and let $\theta$ be the fully invariant closed
congruence on the profinite semigroup $\Om AK$ generated by the set of
all pairs of the form
\begin{equation}
  \label{eq:1example-K-def}
  (ab^nab^\omega,ab^{n+1}a^\omega)\quad(n\ge1).
\end{equation}

For a pseudword $w\in\Om AK$, denote by $\tilde{0}(w)$ the shortest
prefix of~$w$ with the same content as~$w$. Note that $\tilde{0}(w)$
is a well-defined finite word. it turns out to define an invariant for
$\theta$-classes.

\begin{Lemma}
  \label{l:example-K-0bar0}
  If $u\mathrel\theta v$ then $\tilde{0}(u)=\tilde{0}(v)$.
\end{Lemma}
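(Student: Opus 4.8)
The plan is to show that $\tilde{0}$ is constant on the generating pairs and then that it is preserved under the operations that build up the fully invariant closed congruence, so that it is an invariant of every $\theta$-class.

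\medskip

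First I would check that $\tilde{0}$ is unaffected by passing between the two sides of a generating pair. For a generating pair $(ab^nab^\omega, ab^{n+1}a^\omega)$ with $n\ge1$, both words have content $\{a,b\}$, and the shortest prefix realizing that content is $ab$ in each case; hence $\tilde{0}$ agrees on the two sides. More generally, I must understand how $\tilde{0}$ behaves under the operations used in the construction of the fully invariant closed congruence, namely: (i) applying a continuous endomorphism $\varphi$ of $\Om AK$ to the two components of a generating pair; (ii) multiplying on the left and right by a fixed element; (iii) transitive closure; and (iv) topological closure. The key observation I would isolate is a ``prefix compatibility'' property of $\tilde{0}$: since $\Om AK$ is built from finite and right-infinite words with concatenation (and right-infinite words acting as left zeros), the content of a product $uv$ is $c(u)$ if $u$ is infinite and $c(u)\cup c(v)$ otherwise, and $\tilde{0}(uv)$ is determined by scanning $u$ first.

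\medskip

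The core step is to prove that if $(p,q)$ is one of the basic related pairs obtained from a generating pair by an endomorphism and by left/right multiplication, then $\tilde{0}(p)=\tilde{0}(q)$. For a fixed element $w\in\Om AK$ and a related pair $(\alpha,\beta)$ with $\tilde{0}(\alpha)=\tilde{0}(\beta)$ and $c(\alpha)=c(\beta)$, I would argue that $\tilde{0}(w\alpha)=\tilde{0}(w\beta)$ and $\tilde{0}(\alpha w)=\tilde{0}(\beta w)$. Left multiplication is the cleaner case: when scanning $w\alpha$ to find the shortest prefix attaining full content, the prefix of $w$ alone may already suffice, and otherwise one continues into $\alpha$ (respectively $\beta$) where the equality $\tilde{0}(\alpha)=\tilde{0}(\beta)$ and the coincidence of contents guarantee the same outcome. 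Right multiplication requires care precisely when $\alpha$ (hence $\beta$) is a finite word that does not already contain the whole content of $\alpha w$; here I would use that $c(\alpha)=c(\beta)$ together with $\tilde{0}(\alpha)=\tilde{0}(\beta)$ to conclude that the scan of $\alpha w$ and $\beta w$ reaches full content at the same place. For endomorphisms, an endomorphism of $\Om AK$ is determined by the images of $a$ and $b$; I would verify that applying $\varphi$ does not alter the equality of the $\tilde{0}$-values of the two sides of a generating pair, using that $\tilde{0}(\varphi(u))$ is computed from $\tilde{0}$ of the images of the letters in the order they first appear in $u$.

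\medskip

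Finally, the two closure operations are handled easily once the generating relation is shown to respect $\tilde{0}$. Transitive closure is immediate since $\tilde{0}$ being equal on $(u,v)$ and on $(v,w)$ forces it to be equal on $(u,w)$. For topological closure, I would invoke the continuity of $\tilde{0}$ as a map $\Om AK\to A^*$: since $\tilde{0}(w)$ depends only on the initial segment of $w$ up to the point where full content is first achieved, the map $\tilde{0}$ is continuous (locally constant on the relevant clopen sets), so the relation $\{(u,v): \tilde{0}(u)=\tilde{0}(v)\}$ is closed, and any closure operation stays inside it. The main obstacle I expect is the right-multiplication case of the core step, since that is where the finite-word subcase interacts most delicately with the contents and where one must be certain that $c(\alpha)=c(\beta)$ genuinely holds for all basic related pairs; establishing that content is itself an invariant of $\theta$ (which is nearly automatic from the form of the generators) would be the prerequisite that unlocks the argument.
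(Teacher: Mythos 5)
Your overall strategy---induction on the transfinite construction of $\theta$, checking the generating pairs and then stability under substitution, left and right multiplication, transitive closure and topological closure---is the same as the paper's. However, two of the steps, as you propose to carry them out, would fail. The more serious one is the right-multiplication step. You propose to deduce $\tilde{0}(\alpha w)=\tilde{0}(\beta w)$ from $\tilde{0}(\alpha)=\tilde{0}(\beta)$ and $c(\alpha)=c(\beta)$ in the case where $\alpha,\beta$ are finite; that implication is simply false: take $\alpha=a$, $\beta=aa$, $w=b$, so that $\tilde{0}(\alpha)=\tilde{0}(\beta)=a$ and the contents agree, yet $\tilde{0}(ab)=ab\ne aab=\tilde{0}(aab)$. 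The observation you are missing---and the one the paper uses to dispose of this case in a single clause---is that the case is vacuous: in \Om AK every infinite pseudoword is a left zero, both components of every generating pair \eqref{eq:1example-K-def} are infinite, and continuous endomorphisms of~\Om AK send infinite pseudowords to infinite pseudowords; hence every nontrivial pair arising at any stage of the construction has left-zero components, and right multiplication acts trivially on it.

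The second problem is your justification of the topological-closure step. The map $\tilde{0}\colon\Om AK\to A^*$ is \emph{not} continuous: $a^nb\to a^\omega$ in \Om AK, while $\tilde{0}(a^nb)=a^nb$ does not converge to $\tilde{0}(a^\omega)=a$ in the discrete space $A^*$ (equivalently, the content function is not continuous on \Om AK). Nor is $\tilde{0}$ locally constant: the fibre $\tilde{0}^{-1}(p)$ is closed but not open. So you cannot conclude that $\{(u,v):\tilde{0}(u)=\tilde{0}(v)\}$ is closed by continuity of $\tilde{0}$. What is continuous is the prefix-of-fixed-length map (because the languages $wA^*$ are \pv K-recognizable), which is what the paper invokes; to finish one must argue, say, that for a convergent net of pairs sharing a common $\tilde{0}$-value, either those values have bounded length, in which case one may pass to a constant value $p$ and use that $\tilde{0}^{-1}(p)=(\{p\}\cup pA^*)\cap\{u:c(u)\subseteq c(p)\}$ is closed, or they have unbounded length, in which case the two limit points share all finite prefixes and therefore coincide. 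The statement is true and your decomposition of the problem is the right one, but both of these steps need the repairs indicated.
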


\begin{proof}
  The lemma is proved by transfinite induction on the construction
  of~$\theta$. In step 0, we consider the defining
  pairs~\eqref{eq:1example-K-def}, which do have the required property
  since both components start by $ab$; this property is clearly
  preserved by substitution and multiplication on the left;
  multiplication on the right does not affect it since all nontrivial
  pairs have left-zero components. The property is also clearly
  preserved in the transitive closure steps. Finally, it is also
  preserved in the topological closure steps since the languages of
  the form $wA^*$ are \pv K-recognizable, which implies that the
  prefix of a fixed length defines a continuous function on~\Om AK.
\end{proof}

Next, we establish that $\theta$ identifies various pairs of infinite
pseudowords.

\begin{Lemma}
  \label{l:example-K}
  The following $\theta$ relations hold:
  \begin{enumerate}[(i)]
  \item\label{item:example-K-1}
    $ab^na^\omega\mathrel\theta ab^\omega$ for $n\ge3$;
  \item\label{item:example-K-2}
    $ab^\omega\mathrel\theta (ab^n)^\omega$ for $n\ge2$;
  \item\label{item:example-K-3}
    $(ab)^2a^\omega\mathrel\theta (ab)^\omega$;
  \item\label{item:example-K-4}
    $(ab)^\omega\mathrel\theta (aba)^\omega$;
  \item\label{item:example-K-5}
    $(aba)^\omega\mathrel\theta aba^\omega$;
  \item\label{item:example-K-6}
    $(ab)^\omega\mathrel\theta aba^\omega$;
  \item\label{item:example-K-7}
    $ab^\omega\mathrel\theta aba^\omega$;
  \item\label{item:example-K-8}
    $ab^\omega\mathrel\theta abw$ for every $w\in\Om AK\setminus A^+$.
  \end{enumerate}
\end{Lemma}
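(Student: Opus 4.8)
The plan is to derive every identity in (i)--(viii) from the defining family
\[
(*_n)\colon\quad ab^nab^\omega \mathrel\theta ab^{n+1}a^\omega \qquad (n\ge1)
\]
using only three operations, chained by transitivity: full invariance (apply a continuous endomorphism $\varphi$ of $\Om AK$, which is determined by the pair $(\varphi(a),\varphi(b))$), the congruence property (left multiplication; right multiplication is harmless since every pseudoword occurring below is an infinite word, hence a left zero), and closedness of $\theta$ (passage to limits). Lemma~\ref{l:example-K-0bar0} is kept in reserve only as a consistency check: every pseudoword appearing below starts with $ab$, so all the asserted pairs automatically have equal $\tilde0$.

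I would begin with (i). Since $(*_{n-1})$ reads $ab^{n-1}ab^\omega\mathrel\theta ab^na^\omega$, item (i) reduces to proving $ab^mab^\omega\mathrel\theta ab^\omega$ for $m\ge2$. Applying the substitution $b\mapsto b^k$ (with $a\mapsto a$) to $(*_n)$ gives $ab^{kn}ab^\omega\mathrel\theta ab^{kn+k}a^\omega$, and comparing with $(*_{kn+k-1})$ yields $ab^{kn}ab^\omega\mathrel\theta ab^{kn+k-1}ab^\omega$. A short connectivity argument on the integers $\ge2$ (the pairs $\{kn,kn+k-1\}$ for $k\ge2$ generate a connected graph) shows that all the pseudowords $ab^mab^\omega$ with $m\ge2$ lie in a single $\theta$-class. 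Because $ab^mab^\omega\to ab^\omega$ as $m\to\infty$ and $\theta$ is closed, $ab^\omega$ belongs to that class, which is exactly $ab^mab^\omega\mathrel\theta ab^\omega$, giving (i). Item (ii) then follows by a clean limiting argument: left-multiplying $ab^nab^\omega\mathrel\theta ab^\omega$ repeatedly by $ab^n$ gives $(ab^n)^kab^\omega\mathrel\theta ab^\omega$ for every $k$, and since $(ab^n)^kab^\omega\to(ab^n)^\omega$, closedness delivers $(ab^n)^\omega\mathrel\theta ab^\omega$.

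For (iii)--(vii) I would feed the already-proved infinite identities back through full invariance. The substitution $b\mapsto ba$ sends a power of $b$ to a power of $ab$ (using $a(ba)^n=(ab)^na$ and $a(ba)^\omega=(ab)^\omega$), so applying it to (i) and (ii) produces the relations involving $(ab)^\omega$, $(aba)^\omega$ and $aba^\omega$ that underlie (iii), (iv), (v); the coordinate swap $a\leftrightarrow b$ (also an endomorphism, hence permitted) supplies the companion relations needed to close the transitive chains. Item (vi) is then immediate from (iv) and (v). For (vii) the point is that $ab\cdot a^\omega$ and $ab\cdot b^\omega$ must be identified \emph{in the context of the prefix} $ab$, even though $a^\omega\not\mathrel\theta b^\omega$ globally; concretely, I would establish an absorption relation $ab^Naba^\omega\mathrel\theta aba^\omega$ for large $N$ and let $N\to\infty$, using $ab^Naba^\omega\to ab^\omega$, or alternatively derive (vii) together with (vi) from (iv)--(v) and (ii).

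Finally, (viii) is the capstone and is where the real work lies. The map $w\mapsto abw$ is continuous and the purely periodic words $\{v^\omega:v\in A^+\}$ are dense in the closed set $\Om AK\setminus A^+$ of infinite pseudowords, so by closedness of $\theta$ it suffices to prove $abv^\omega\mathrel\theta ab^\omega$ for every finite nonempty word $v$. When $v$ uses only the letter $b$ this is trivial; when $v$ contains $a$ one rewrites $abv^\omega$ using identities such as $(ba)^\omega=b(ab)^\omega$ and reduces it, via full invariance, to the instances already handled in (i), (ii) and (vii). The main obstacle throughout is the passage from the finitary relations to the genuinely infinite identifications: one must interleave transitive and topological closure correctly, and in (i) control the connectivity bookkeeping, while in (viii) one must organize the case analysis on $v$ so that every periodic infinite word is reached from the finite list (i)--(vii). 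Note that (vii) is exactly the instance $w=a^\omega$ of (viii), so a self-contained proof of (viii) would subsume it.
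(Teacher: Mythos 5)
Your toolbox (full invariance, left multiplication, transitivity, topological closure) is the right one and matches the paper's, and your treatment of (i) and (ii) is correct: the connectivity argument for (i) is a harmless variant of the paper's iteration $n\mapsto 2(n-1)$ inside a single class, and (ii) is handled exactly as in the paper. The gaps begin at (iii). Substituting $b\mapsto ba$ into (i) yields $(ab)^na^\omega\mathrel\theta(ab)^\omega$ only for $n\ge3$, whereas (iii) asserts precisely the case $n=2$, which this substitution cannot reach; the paper has to work for it, passing through $(ab)^2a^\omega\mathrel\theta a(baba^2)^\omega=(a(ba)^2)^\omega\mathrel\theta(ab)^\omega$ via two different instances of (ii) under different substitutions. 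Similarly, (iv) and (v) are not single substitutions applied to (i)--(ii): each requires first manufacturing a relation of the form $z\mathrel\theta uz$ (e.g.\ $(ab)^\omega\mathrel\theta aba\cdot(ab)^\omega$, $aba^\omega\mathrel\theta aba\cdot aba^\omega$) and then iterating and taking limits to reach $u^\omega$; your sketch omits this mechanism for these items. For (vii), the proposed absorption relation $ab^Naba^\omega\mathrel\theta aba^\omega$ is asserted but not derived, and it is not clear how to obtain it without already having something equivalent to (vii); in the paper this is the most delicate step, requiring (vi) under the substitution $a\mapsto ab$, $b\mapsto b$, a defining pair, and two separate iteration-and-limit arguments.

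For (viii), the reduction by density to periodic words $abv^\omega$ is a legitimate framework, but the substantive content --- that every $abv^\omega$ with $v$ containing an $a$ reduces ``via full invariance'' to (i), (ii), (vii) --- is exactly the case analysis you would have to carry out, and it is missing. The paper's route is more economical and worth comparing: from (vii), the substitution $b\mapsto b^m$ gives $ab^\omega\mathrel\theta ab^ma^\omega$, the swap $a\leftrightarrow b$ applied to the suffix $ba^\omega$ gives $ab^\omega\mathrel\theta ab^ma^nb^\omega$, whence the $\theta$-class of $ab^\omega$ is stable under left multiplication by every $ab^ma^{n-1}$, and a single limit argument produces all infinite words beginning with $ab$. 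In short: correct skeleton and correct proofs of (i)--(ii), but (iii)--(viii) contain genuine gaps, and the one concrete mechanism you name for (iii) demonstrably falls one case short.
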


\begin{proof}
  To prove~(\ref{item:example-K-1}), respectively from the definition
  of~$\theta$ and its full invariance, we observe that
  \begin{displaymath}
    ab^na^\omega 
    \mathrel\theta ab^{n-1}ab^\omega 
    =ab^{n-1}a(b^{n-1})^\omega 
    \mathrel\theta ab^{2(n-1)}a^\omega.
  \end{displaymath}
  Hence, for every $n\ge3$, there is $m>n$ such that %
  $ab^na^\omega\mathrel\theta ab^ma^\omega$. Iterating this procedure,
  we obtain a strictly increasing sequence $(n_k)_k$ of integers
  starting at $n$ such that all $ab^{n_k}a^\omega$ lie in the same
  $\theta$-class. Since $\theta$ is closed, it follows that
  $ab^\omega=\lim ab^{n_k}a^\omega$ also lies in the same
  $\theta$-class.

  For~(\ref{item:example-K-2}), note that, for $n\ge 2$, 
  \begin{displaymath}
    ab^\omega 
    \stackrel[\text{(\ref{item:example-K-1})}]{}\theta ab^{2n}a^\omega 
    =ab^nb^na^\omega 
    \mathrel\theta ab^na(b^n)^\omega.
  \end{displaymath}

  Hence, we have $ab^\omega\mathrel\theta ab^nab^\omega$. Since
  $\theta$ is stable under multiplication and transitive, it follows
  that $ab^\omega\mathrel\theta (ab^n)^mab^\omega$ for every $m\ge1$.
  Taking limits, we conclude that %
  $ab^\omega\mathrel\theta (ab^n)^\omega$.

  The relation~(\ref{item:example-K-3}) follows from the following
  calculations:
  \begin{displaymath}
    (ab)^2a^\omega 
    =a\cdot bab\cdot a^\omega 
    \stackrel[\text{(\ref{item:example-K-2})}]{}\theta a(baba^2)^\omega 
    =(a(ba)^2)^\omega 
    \stackrel[\text{(\ref{item:example-K-2})}]{}\theta a(ba)^\omega 
    =(ab)^\omega.
  \end{displaymath}

  For~(\ref{item:example-K-4}), note that
  \begin{displaymath}
    aba\cdot(ab)^\omega
    =aba^2(ba)^\omega
    =a\cdot ba\cdot a\cdot(ba)^\omega
    \mathrel\theta a(ba)^2 a^\omega
    =(ab)^2a^\omega
    \stackrel[\text{(\ref{item:example-K-3})}]{}\theta (ab)^\omega.
  \end{displaymath}
  Hence, we have $(ab)^\omega\mathrel\theta (aba)^n(ab)^\omega$ for
  every $n\ge1$. Taking limits, yields~(\ref{item:example-K-4}).

  Next, for~(\ref{item:example-K-5}), we observe that
  \begin{displaymath}
    aba^\omega
    \stackrel[\text{(\ref{item:example-K-1})}]{}\theta aba^3b^\omega
    \mathrel\theta aba^2ba^\omega
    =aba\cdot aba^\omega.
  \end{displaymath}
  Iterating, we get $aba^\omega\mathrel\theta (aba)^naba^\omega$
  which, taking limits, yields ~(\ref{item:example-K-5}).

  Transitivity applied to~(\ref{item:example-K-4})
  and~(\ref{item:example-K-5}) gives~(\ref{item:example-K-6}).

  For~(\ref{item:example-K-7}), we first note that
  \begin{displaymath}
    ab^\omega 
    \stackrel[\text{(\ref{item:example-K-2})}]{}\theta (ab^2)^\omega
    \stackrel[\text{(\ref{item:example-K-6})}]{}\theta ab^2(ab)^\omega
    \stackrel[\text{(\ref{item:example-K-6})}]{}\theta ab^2aba^\omega
    \mathrel\theta ab^2a^2b^\omega
    =ab^2a\cdot ab^\omega
  \end{displaymath}
  where, in the second step we use invariance of~$\theta$ under the
  substitution sending $a$ to $ab$ and fixing~$b$. Iterating and
  taking limits yields the first step in the following string of
  relations:
  \begin{displaymath}
    ab^\omega
    \mathrel\theta (ab^2a)^\omega
    =a(b^2a^2)^\omega
    \stackrel[\text{(\ref{item:example-K-2})}]{}\theta ab^2a^\omega
    \mathrel\theta ab\cdot ab^\omega.
  \end{displaymath}
  Again, iterating and taking limits gives %
  $ab^\omega\mathrel\theta (ab)^\omega$ which, combined
  with~(\ref{item:example-K-6}) yields~(\ref{item:example-K-7}).

  Finally, for~(\ref{item:example-K-8}), we start by observing that,
  substituting $b$ by $b^m$ in~(\ref{item:example-K-7}), we get
  $ab^\omega\mathrel\theta ab^ma^\omega$. Interchanging $a$ and $b$
  and applying the same argument to the suffix $ba^\omega$ of
  $ab^ma^\omega$ yields $ab^\omega\mathrel\theta ab^ma^nb^\omega$.
  Thus, multiplying on the left any element of the $\theta$-class of
  $ab^\omega$ by $ab^ma^{n-1}$ with $m,n\ge1$ we stay in the same
  class. We may, therefore obtain any infinite word in the letters $a$
  and $b$ starting with $ab$, which proves~(\ref{item:example-K-8}).
\end{proof}

Note that in the proof of
Lemma~\ref{l:example-K}(\ref{item:example-K-8}) we only used the
relation $ab^\omega\mathrel\theta aba^\omega$. Combined with
Lemma~\ref{l:example-K}, this observation shows that $\theta$ is also
the fully invariant closed congruence generated by the pair
$(aba^\omega,ab^\omega)$.

\begin{Prop}
  \label{p:example-K}
  The quotient topological semigroup $\Om AK/\theta$ is profinite.
\end{Prop}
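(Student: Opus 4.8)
The plan is to derive the proposition from the characterization recalled in Section~\ref{sec:prelims}, due to Gehrke~\cite[Theorem~4.3]{Gehrke:2016a}: a quotient of a profinite algebra is profinite precisely when it is zero-dimensional. Since $\theta$ is closed, the quotient $\Om AK/\theta$ is compact and Hausdorff, so it suffices to prove that it is zero-dimensional. I would obtain this from the stronger assertion that $\Om AK/\theta$ is \emph{countable}: a countable Hausdorff space has no nondegenerate connected subsets, hence is totally disconnected, and a totally disconnected compact Hausdorff space is zero-dimensional. The whole proof thus reduces to counting the $\theta$-classes.

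First I would show that $\theta$ relates only infinite pseudowords, so that every finite word forms a singleton class. For this it suffices to check that the property of having both components infinite is preserved throughout the transfinite construction of~$\theta$. The defining pairs~\eqref{eq:1example-K-def} have infinite components; a continuous endomorphism of~$\Om AK$ sends infinite pseudowords to infinite pseudowords, while multiplication on either side preserves infiniteness because infinite pseudowords are left zeros of~$\Om AK$; transitive closure trivially preserves the property; and the topological-closure steps preserve it because $\Om AK\setminus A^+$ is a closed subset of~$\Om AK$, the finite words being isolated points. Hence the non-diagonal part of~$\theta$ is contained in $(\Om AK\setminus A^+)^2$, and the finite words contribute only countably many singleton classes.

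It then remains to show that the infinite pseudowords split into only countably many $\theta$-classes. I would use the invariant $\tilde{0}$ of Lemma~\ref{l:example-K-0bar0}: on an infinite pseudoword it takes one of the values $a$, $b$ (for $a^\omega$, $b^\omega$) or $a^kb$, $b^ka$ with $k\ge1$ (for pseudowords of content $\{a,b\}$), a countable set of values, and pseudowords with different values of~$\tilde{0}$ are never $\theta$-related. For the converse, I would check that any two infinite pseudowords with the same value of~$\tilde{0}$ are $\theta$-equivalent. There is nothing to prove for the values $a$ and $b$, realized only by $a^\omega$ and $b^\omega$. For $\tilde{0}=a^kb$, every pseudoword of this class has the form $a^kbw$ with $w$ infinite, and multiplying the relation $ab^\omega\mathrel\theta abw$ of Lemma~\ref{l:example-K}(\ref{item:example-K-8}) on the left by $a^{k-1}$ gives $a^kb^\omega\mathrel\theta a^kbw$; the case $\tilde{0}=b^ka$ follows by the symmetry interchanging $a$ and $b$. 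Hence each admissible value of~$\tilde{0}$ indexes a single class of infinite pseudowords, so there are countably many such classes and $\Om AK/\theta$ is countable, as required.

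The main obstacle is precisely this last collapse: the infinite pseudowords form a Cantor set, yet they must fall into countably many classes. The mechanism that makes it work is Lemma~\ref{l:example-K}(\ref{item:example-K-8}), which absorbs an arbitrary infinite suffix after an initial $ab$; everything else is bookkeeping with the invariant~$\tilde{0}$ and the left-zero structure of~$\Om AK$. By contrast, the preservation under topological closure invoked in the second paragraph, which might look delicate, becomes immediate once one observes that the infinite pseudowords form a closed subset of~$\Om AK$.
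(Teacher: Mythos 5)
Your argument is correct, and its mathematical core coincides with the paper's: finite words form singleton $\theta$-classes, and two infinite pseudowords are $\theta$-equivalent exactly when they share the same value of the invariant $\tilde{0}$, the collapse being driven by Lemma~\ref{l:example-K}(\ref{item:example-K-8}) together with stability of~$\theta$ under left multiplication (and under the swap of $a$ and~$b$, which preserves $\theta$ by full invariance). Where you diverge is the final topological step. The paper verifies profiniteness directly, by exhibiting for each pair of distinct classes a separating clopen union of classes: the singleton $\{w\}$ for $w$ a finite word, the set $\tilde{0}(w)\,\Om AK$ for $w$ infinite of full content, and $a\,\Om AK$ to separate $\{a^\omega\}$ from $\{b^\omega\}$. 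You instead count the classes, conclude that $\Om AK/\theta$ is a countable compact Hausdorff space, and invoke Gehrke's characterization via zero-dimensionality. Your route is shorter at the end but leans on Gehrke's theorem, which the paper quotes in the preliminaries yet does not need in this proof; the paper's version is self-contained and produces the separating clopen sets explicitly. A side benefit of your write-up is that you actually justify the assertion that finite words form singleton classes, by tracking infiniteness of both components through the transfinite construction of~$\theta$ --- a point the paper states without proof.

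One small repair: the claim that a countable Hausdorff space has no nondegenerate connected subsets is false in general, as countable connected Hausdorff spaces exist. What you need, and have, is that $\Om AK/\theta$ is compact Hausdorff, hence Tychonoff, as are all its subspaces; a connected Tychonoff space with more than one point maps continuously onto $[0,1]$ and so has at least continuum many points. With that one-word strengthening the countability argument goes through.
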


\begin{proof}
  We must show that distinct $\theta$-classes may be separated by
  clopen unions of $\theta$-classes. The $\theta$-classes of finite
  words are singleton sets. Thus, the $\theta$-class of a finite word
  is itself a clopen set separating it from any other $\theta$-class.
  On the other hand, by Lemmas~\ref{l:example-K-0bar0}
  and~\ref{l:example-K}(\ref{item:example-K-8}), two infinite
  pseudowords are $\theta$-equivalent if and only if they have the
  same value under $\tilde{0}$. Hence the $\theta$-class of an
  infinite pseudoword $w\in\Om AK$ of full content is separated from
  any other class containing infinite pseudowords by the clopen set
  $\tilde{0}(w)\,\Om AK$. The remaining classes
  are the singletons $\{a^\omega\}$ and $\{b^\omega\}$, which are
  separated by the clopen set $a\,\Om AK$, which in turn is a union of
  $\theta$-classes by Lemma~\ref{l:example-K-0bar0}.
\end{proof}

Note that $\Om AK/\theta$ is a countable relatively free profinite
semigroup with infinitely many idempotents.

\section{A non-profinite fully invariant closed congruence}
\label{sec:unary}

We originally proposed our conjecture that fully invariant closed
congruences on relatively free profinite semigroups are profinite as
being valid in all reasonable algebraic contexts
\cite{Almeida&Klima:2017a}. We proceed to present a counterexample for
unary algebras.

Throughout the remainder of the paper, we fix a finite set $A$ whose
elements are viewed as symbols of unary operations. Note that an
\emph{$A$-algebra} is a nonempty set $U$ together with a function
$a_U:U\to U$ for each $a\in A$. These functions generate a
subsemigroup $\Cl S_U$ of the full transformation semigroup $\Cl T_U$
of the set $U$ and induce a homomorphism $\alpha_U:A^+\to\Cl S_U$, 
where $\alpha_U(a)=a_U$. 
For
$w\in A^+$ and $u\in U$, we write $w\cdot u$ for $\alpha_U(w)(u)$.
  
An onto homomorphism $\varphi:U\to V$ of $A$-algebras induces a
homomorphism $\bar\varphi:\Cl S_U\to\Cl S_V$. Indeed, for $w,w'\in
A^+$, $\alpha_U(w)=\alpha_U(w')$ means that, for every $u\in U$,
$w\cdot u=w'\cdot u$, which implies that
$w\cdot\varphi(u)=w'\cdot\varphi(u)$. Since $\varphi(u)$ represents an
arbitrary element of~$V$, we conclude that $\alpha_V(w)=\alpha_V(w')$.
Hence, for the homomorphism $\bar\varphi$, the following diagram
commutes: 
\begin{displaymath}
  \xymatrix@R=3pt{
    & **[r]{\Cl S_U}
    \ar@{-->}[dd]^{\bar\varphi} \\
    A^+
    \ar[ru]^{\alpha_U}
    \ar[rd]_{\alpha_V}
    &\\
    & **[r]{\Cl S_V.} }
\end{displaymath}
Note that $\bar\varphi$ is onto and it is the unique homomorphism for which 
the diagram commutes. From these observations one may deduce that 
$\overline{\psi\circ\varphi}=\bar{\psi}\circ \bar{\varphi}$, for 
every pair of onto homomorphisms
$\varphi : U \rightarrow V$ and $\psi : V\rightarrow W$.
This property suggests that one may interpret the operator 
$\Cl S$ applied on $A$-algebras together with the operator 
$\bar{\phantom{\varphi}}$ applied on homomorphisms between 
$A$-algebras as a functor from the category of $A$-algebras
to the category of semigroups (or monoids).

Given an $A$-generated semigroup $S$, we may consider $S^I$ as an
$A$-algebra where $a\cdot s=as$ for $a\in A$ and $s\in S^I$. Note that
$\Cl S_{S^I}\simeq S$ is essentially the Cayley representation theorem
for semigroups.
To continue with the categorical point of view, one may see the
operator $\underline{\phantom{S}}^I$ as a functor which is adjoint to
the functor mentioned in the previous paragraph. For discrete algebras
and monoids this was established in~\cite{Planting:2013}.
This idea is not needed for the purpose of this section, but we return
to it in the next two sections, where we establish appropriate results
in the realm of profinite $A$-algebras and monoids.

\begin{Lemma}
  \label{l:A-algebra-from-ts}
  Let $U$ be an $A$-algebra generated by an element $x$. Then the
  mapping $1\mapsto x$ extends uniquely to an onto homomorphism of
  $A$-algebras $(\Cl S_U)^I\to U$.
\end{Lemma}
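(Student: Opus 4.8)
The plan is to exhibit the homomorphism explicitly by evaluation at $x$, and then to check the three assertions (homomorphism, uniqueness, surjectivity) one at a time. Concretely, I would define $\psi\colon(\Cl S_U)^I\to U$ by $\psi(1)=x$ and $\psi(s)=s(x)$ for $s\in\Cl S_U$, recalling that an element of $\Cl S_U\subseteq\Cl T_U$ is literally a transformation of~$U$ and so may be evaluated at~$x$. This map is well defined because the adjoined identity $1$ is by construction a new isolated point, distinct from every element of $\Cl S_U$; and the two clauses are mutually consistent, since $x=\mathrm{id}_U(x)$.

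The first thing to verify is that $\psi$ is a homomorphism of $A$-algebras, i.e.\ that $\psi(a\cdot s)=a_U\bigl(\psi(s)\bigr)$ for every $a\in A$ and $s\in(\Cl S_U)^I$. By the definition of the $A$-algebra structure on $(\Cl S_U)^I$, the left-hand side equals $\psi(a_U\,s)$, where $a_U\,s$ denotes the product in $\Cl S_U$, that is, the composite transformation; evaluating at~$x$ gives $a_U\bigl(s(x)\bigr)$, which is exactly the right-hand side. The case $s=1$ is the same computation with $s$ the identity transformation.

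Next I would record that $(\Cl S_U)^I$ is generated as an $A$-algebra by the single element~$1$: for a word $w=a_1\cdots a_k\in A^+$ one computes $w\cdot 1=\alpha_U(w)$, so the subalgebra generated by $1$ contains $\{1\}\cup\{\alpha_U(w):w\in A^+\}$, which is all of $(\Cl S_U)^I$ since $\Cl S_U$ is by definition the subsemigroup generated by the transformations $a_U=\alpha_U(a)$. This immediately yields uniqueness: any $A$-algebra homomorphism sending $1$ to $x$ agrees with $\psi$ on the generator, hence everywhere. Surjectivity follows from the dual observation that $U$ is generated by~$x$: the image of $\psi$ contains $x=\psi(1)$ together with every $\alpha_U(w)(x)=\psi\bigl(\alpha_U(w)\bigr)$ for $w\in A^+$, i.e.\ exactly the subalgebra of $U$ generated by~$x$, which equals~$U$ by hypothesis.

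There is no genuine obstacle here; the whole argument is bookkeeping once the formula $\psi(s)=s(x)$ is on the table. The one point demanding care is to keep the semigroup product in $\Cl S_U$ (composition of transformations) aligned with the evaluation-at-$x$ map, so that the homomorphism identity comes out with the correct composition convention and the computation $w\cdot 1=\alpha_U(w)$ reads off in the right order.
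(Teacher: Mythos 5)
Your proof is correct and follows essentially the same route as the paper's: both define the map by evaluation at $x$ (the paper writes it as $\alpha_U(w)\mapsto w\cdot x$, you write it as $s\mapsto s(x)$, which is the same thing and makes well-definedness automatic) and then check the homomorphism property, surjectivity from $U$ being generated by $x$, and uniqueness from $(\Cl S_U)^I$ being generated by $1$. Your write-up is somewhat more detailed than the paper's one-line verification, but there is no difference in substance.
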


\begin{proof}
  Given $w,w'\in A^+$ acting the same way on every element of~$U$,
  we have, in particular $w\cdot x=w'\cdot x$. Hence, we may
  define a mapping $\varphi:(\Cl S_U)^I\to U$ by
  $\varphi(\alpha_U(w))=w\cdot x$ and $\varphi(1)=x$. Clearly this
  mapping respects the action of~$A$ and it is onto.  Moreover,
  $\varphi$ is the unique homomorphism mapping $1$ to~$x$.
\end{proof}

A profinite $A$-algebra $U$ is an inverse limit of finite
$A$-algebras.
Note that $A$-algebras in general do not have finitely determined
syntactic congruences, a property that is known to entail that a
compact zero-dimensional algebra is profinite (see
\cite{Clark&Davey&Freese&Jackson:2004}). Nevertheless, one may ask
whether every compact zero-dimensional $A$-algebra is profinite. We
proceed to describe a simple counterexample.

\begin{eg}
  \label{eg:profiniteness-A-algebras}
  Take $A=\{a\}$ to be a singleton set of operation symbols. Let $U$
  be the one point compactification of the set of natural numbers,
  which is obtained by adding a point $\infty$ to which all sequences
  without bounded subsequences converge. Define a structure of
  $A$-algebra on $U$ by letting $a(n)=\max\{0,n-1\}$ for $n\ge0$ and
  $a(\infty)=\infty$. Then $U$ is a compact zero-dimensional $A$-algebra. 
  Given a continuous homomorphism $\varphi:U\to F$
  into a finite $A$-algebra $F$, there must exist $m>n$ such that
  $\varphi(m)=\varphi(n)$. Then, we have
  $\varphi(n)=\varphi(a^{m-n}(m))=a^{m-n}(\varphi(n))$ and so
  $\varphi(n)=a^{k(m-n)}(\varphi(n))=\varphi(a^{k(m-n)}(m))$ for all
  $k\ge1$. Taking $k$ sufficiently large, we conclude that
  $\varphi(n)=\varphi(0)$ which yields $\varphi(r)=\varphi(0)$ for all
  $r\le m$ as $\varphi(0)$ is a fixed point under the action of~$a$.
  Since there must be arbitrarily large $m$ in the above conditions,
  we deduce that $\varphi$ is constant on natural numbers. Hence, $U$
  is not residually finite and, therefore, it is not profinite.
\end{eg}

Let $U=\varprojlim U_i$ be an inverse limit of finite $A$-algebras
with onto connecting homomorphisms $\varphi_{ij}:U_i\to U_j$ ($i\ge
j$). There is an associated inverse system of $A$-generated finite
semigroups $\Cl S_{U_i}$ with connecting (onto) homomorphisms
$\bar\varphi_{ij}:\Cl S_{U_i}\to\Cl S_{U_j}$ ($i\ge j$). Thus, we may
consider the inverse limit $\varprojlim \Cl S_{U_i}$,
which may be described as the closed subsemigroup of the product
$\prod_i\Cl S_{U_i}$ consisting of all $(s_i)_i$ (with $s_i\in\Cl
S_{U_i}$) such that $\bar\varphi_{ij}(s_i)=s_j$ whenever $i\ge j$.

As defined above, $\Cl S_U$ is the subsemigroup of~$\Cl T_U$ generated
by the $a_U$ ($a\in A$). For each $i$, there is an onto homomorphism
of $A$-algebras $\varphi_i:U\to U_i$, which induces an onto
semigroup homomorphism $\bar\varphi_i:\Cl S_U\to\Cl S_{U_i}$.

Suppose that $w,w'\in A^+$ are such that there exists $u\in U$ with
$w\cdot u\ne w'\cdot u$. Then, there exists $i$ such that
\begin{displaymath}
  w\cdot\varphi_i(u)
  =\varphi_i(w\cdot u)
  \ne\varphi_i(w'\cdot u)
  =w'\cdot\varphi_i(u).
\end{displaymath}
Hence, the homomorphism $\Cl S_U\to\prod_i\Cl S_{U_i}$ induced by the
$\bar\varphi_i$ is injective and it takes values in the profinite
semigroup $\varprojlim \Cl S_{U_i}$, 
which we denote $\hat{\Cl S}_U$.

\begin{Lemma}
  \label{l:completion}
  Let $V=\varprojlim V_\gamma$ be another inverse limit of finite
  $A$-algebras with onto connecting homomorphisms and let $\rho:V\to
  U$ be an onto continuous homomorphism. Then there is a continuous
  semigroup homomorphism $\hat{\rho}:\hat{\Cl S}_V\to\hat{\Cl S}_U$
  respecting generators from $A$.
\end{Lemma}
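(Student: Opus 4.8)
The plan is to construct $\hat\rho$ level by level over the inverse system defining $\hat{\Cl S}_U=\varprojlim\Cl S_{U_i}$. By the universal property of this inverse limit, it suffices to produce, for each index $i$, a continuous semigroup homomorphism $r_i\colon\hat{\Cl S}_V\to\Cl S_{U_i}$ sending the generator coming from each $a\in A$ to $a_{U_i}$, and to verify that $\bar\varphi_{ij}\circ r_i=r_j$ whenever $i\ge j$; then $\hat\rho=\varprojlim r_i$ is the desired map. As a guiding remark, since $\rho$ is onto it already induces a homomorphism $\bar\rho\colon\Cl S_V\to\Cl S_U$ respecting generators (by the construction preceding Lemma~\ref{l:A-algebra-from-ts}), and $\Cl S_V$ is dense in $\hat{\Cl S}_V$; so $\hat\rho$ will necessarily be the unique continuous extension of $\Cl S_V\xrightarrow{\bar\rho}\Cl S_U\hookrightarrow\hat{\Cl S}_U$, which settles uniqueness at the same time.

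To build $r_i$, fix $i$. The composite $\varphi_i\circ\rho\colon V\to U_i$ is a continuous homomorphism of $A$-algebras from the profinite algebra $V=\varprojlim V_\gamma$ into the finite algebra $U_i$, hence it factors through one of the defining projections: there are an index $\gamma=\gamma(i)$ and an onto homomorphism $\psi_i\colon V_\gamma\to U_i$ of $A$-algebras with $\psi_i\circ\pi_\gamma=\varphi_i\circ\rho$, where $\pi_\gamma\colon V\to V_\gamma$ is the canonical projection (onto because $\varphi_i\circ\rho$ and $\pi_\gamma$ are). Applying the operator $\Cl S$, the map $\psi_i$ yields an onto semigroup homomorphism $\bar\psi_i\colon\Cl S_{V_\gamma}\to\Cl S_{U_i}$ respecting generators; composing it with the canonical projection $q_\gamma\colon\hat{\Cl S}_V\to\Cl S_{V_\gamma}$ gives a continuous homomorphism $r_i=\bar\psi_i\circ q_\gamma$ with $r_i(\hat a)=a_{U_i}$ for the image $\hat a$ of $a$ in $\hat{\Cl S}_V$.

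For compatibility, take $i\ge j$ and choose an index $\delta$ above both $\gamma(i)$ and $\gamma(j)$ in the directed system defining $V$. Pulling $\psi_i$ and $\psi_j$ back along the connecting homomorphisms $\pi_{\delta\gamma(i)},\pi_{\delta\gamma(j)}\colon V_\delta\to V_{\gamma(i)},V_{\gamma(j)}$, both $\varphi_{ij}\circ\psi_i\circ\pi_{\delta\gamma(i)}$ and $\psi_j\circ\pi_{\delta\gamma(j)}$ are factorizations of the single homomorphism $\varphi_j\circ\rho\colon V\to U_j$ through $V_\delta$ (via the onto projection $V\to V_\delta$); since such a factorization through a fixed finite level is unique, these two homomorphisms $V_\delta\to U_j$ coincide. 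Applying $\Cl S$ and invoking the functoriality identity $\overline{\psi\circ\varphi}=\bar\psi\circ\bar\varphi$ noted before the lemma, together with $\bar\pi_{\delta\gamma}\circ q_\delta=q_\gamma$, I obtain $\bar\varphi_{ij}\circ r_i=r_j$. Thus the $r_i$ form a cone over $(\Cl S_{U_i},\bar\varphi_{ij})$, and $\hat\rho=\varprojlim r_i$ is well defined, continuous, and respects generators.

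The main obstacle is precisely this compatibility check: the levels $\gamma(i)$ and the factor maps $\psi_i$ are chosen independently for each $i$, so one must ensure they assemble into a genuine cone. The two ingredients that resolve it are (a) that a continuous homomorphism from the inverse limit $V$ into a finite $A$-algebra factors through a finite level $V_\gamma$, uniquely once the level is fixed, and (b) the functoriality $\overline{\psi\circ\varphi}=\bar\psi\circ\bar\varphi$, which transports the uniqueness of factorizations of $A$-algebra homomorphisms to the required equalities of transformation-semigroup homomorphisms. Everything else — continuity of $\hat\rho$, its respecting generators, and its uniqueness — then follows formally from the inverse-limit description and the density of $\Cl S_V$ in $\hat{\Cl S}_V$.
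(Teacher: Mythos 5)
Your proposal is correct and follows essentially the same route as the paper: factor $\varphi_i\circ\rho$ through a finite level of the system defining $V$, apply the operator $\Cl S$ to get a homomorphism onto $\Cl S_{U_i}$, and assemble the resulting cone via the universal property of the inverse limit. The only difference is cosmetic and lies in the compatibility check, where the paper simply observes that all the semigroups involved are $A$-generated so that continuous homomorphisms respecting the generators from $A$ are unique, while you pass to a common level $V_\delta$ and use uniqueness of factorizations through an onto projection; both arguments are valid.
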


Before establishing Lemma~\ref{l:completion}, we register the
following immediate application which amounts to the independence of
the profinite semigroup $\hat{\Cl S}_U$ on the particular expression
$U=\varprojlim U_i$ of the profinite $A$-algebra $U$ as an inverse
limit of finite $A$-algebras.

\begin{Cor}
  \label{c:completion}
  Up to isomorphism, the profinite semigroup $\hat{\Cl S}_U$
  defined above does not depend on the concrete inverse limit
  $U=\varprojlim U_i$ of finite $A$-algebras with onto connecting
  homomorphisms considered but only on $U$.
\end{Cor}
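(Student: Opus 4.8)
The plan is to deduce the corollary from Lemma~\ref{l:completion} by applying it to the identity homomorphism in both directions. Suppose $U=\varprojlim U_i$ and $U=\varprojlim V_\gamma$ are two expressions of the same profinite $A$-algebra as inverse limits of finite $A$-algebras with onto connecting homomorphisms, producing the profinite semigroups $\hat{\Cl S}_U$ and $\hat{\Cl S}_U'$, respectively. The identity map $\mathrm{id}_U:U\to U$ is an onto continuous homomorphism of $A$-algebras regardless of which presentation is placed on the source and which on the target. Applying Lemma~\ref{l:completion} with $\rho=\mathrm{id}_U$ in each of the two directions yields continuous semigroup homomorphisms $f:\hat{\Cl S}_U'\to\hat{\Cl S}_U$ and $g:\hat{\Cl S}_U\to\hat{\Cl S}_U'$, both of which respect the generators from $A$.

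First I would record that each of $\hat{\Cl S}_U$ and $\hat{\Cl S}_U'$ is topologically generated by the images of the operation symbols $a\in A$. Indeed, $\Cl S_U$ is generated as an abstract semigroup by the $a_U$, and it embeds into $\hat{\Cl S}_U=\varprojlim\Cl S_{U_i}$ with dense image: its projection to each factor $\Cl S_{U_i}$ is onto via $\bar\varphi_i$, and a subset of an inverse limit of finite semigroups with onto connecting maps that surjects onto every factor is dense. Hence the closure of the abstract subsemigroup generated by the images of $A$ is all of $\hat{\Cl S}_U$, and likewise for $\hat{\Cl S}_U'$.

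Next I would invoke the standard rigidity of generator-respecting continuous homomorphisms: two continuous homomorphisms between profinite semigroups that agree on a topologically generating set agree everywhere, since they agree on the abstract subsemigroup generated by that set and hence, by continuity, on its closure. Applying this to the composite $g\circ f:\hat{\Cl S}_U'\to\hat{\Cl S}_U'$, which fixes every generator $a$, shows that $g\circ f=\mathrm{id}$; symmetrically $f\circ g=\mathrm{id}$. Therefore $f$ and $g$ are mutually inverse isomorphisms, and the isomorphism they provide respects the generators from $A$. This identifies $\hat{\Cl S}_U$ with $\hat{\Cl S}_U'$ canonically, proving that the construction depends only on $U$.

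The genuine content is all packed into Lemma~\ref{l:completion}; once that is available, the only point requiring care is the density of $\Cl S_U$ in $\hat{\Cl S}_U$, which underlies both the existence of a topologically generating set and the uniqueness argument. I expect no real obstacle here beyond making that density statement precise, since it is a routine inverse-limit fact given that the connecting homomorphisms $\bar\varphi_{ij}$ are onto.
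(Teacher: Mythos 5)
Your proof is correct and follows exactly the route the paper intends: the paper records the corollary as an immediate application of Lemma~\ref{l:completion}, and your argument (apply the lemma to $\mathrm{id}_U$ in both directions, then use density of the $A$-generated subsemigroup in the inverse limit to conclude the two composites are identities) is the standard fleshing-out of that claim. The density point you flag is indeed the only detail needing care, and your justification of it via surjectivity onto each finite factor is sound.
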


\begin{proof}[Proof of Lemma~\ref{l:completion}]
  Let $\psi_\gamma:V\to V_\gamma$ be the natural mapping. Let
  $\hat{\Cl S}_{U}=\varprojlim\Cl S_{U_i}$ and %
  $\hat{\Cl S}_{V}=\varprojlim\Cl S_{V_\gamma}$. For every $i$, the
  composite continuous homomorphism %
  $\varphi_i\circ\rho$ entails the
  existence of an index $\gamma_i$ and a
  homomorphism $\rho_{\gamma i}:V_{\gamma_i}\to U_i$ such that the
  following diagram commutes (see, for instance,
  \cite[Lemma~3.1.37]{Rhodes&Steinberg:2009qt}):
  \begin{displaymath}
    \xymatrix{%
      V \ar[r]^{\psi_{\gamma_i}}
      \ar[d]^\rho
      & V_{\gamma_i} \ar@{-->}[d]^{\rho_{\gamma_i}} \\
      U \ar[r]^{\varphi_i}
      & \,U_i.}
  \end{displaymath}
  It follows that there is an onto homomorphism %
  $\Cl S_{V_{\gamma_i}}\to\Cl S_{U_i}$ respecting the choice of
  generators from~$A$, whence also an onto continuous homomorphism
  $\delta_i:\hat{\Cl S}_{V}\to\Cl S_{U_i}$, which does not depend on the choice
  of~$\gamma_i$ as it also respects the choice of generators from~$A$.
  If $i\ge j$, it follows that the diagram
  \begin{displaymath}
    \xymatrix@R=3pt{
      & \Cl S_{U_i}
      \ar[dd]^{\bar\varphi_{ij}} \\
      \hat{\Cl S}_V
      \ar[ru]^{\delta_i}
      \ar[rd]_{\delta_j}
      &\\
      & \Cl S_{U_j}}
  \end{displaymath}
  commutes. From the universal property of the inverse limit, we
  deduce 
  that there exists a continuous homomorphism $\hat{\Cl
    S}_V\to\hat{\Cl S}_U$ respecting generators.
\end{proof}

We have shown how to associate an $A$-generated profinite semigroup
with a profinite $A$-algebra. We now show how to, conversely,
associate a profinite $A$-algebra with an $A$-generated profinite
semigroup.

\begin{Lemma}
  \label{l:Cayley-profinite}
  Let $S$ be an $A$-generated profinite semigroup. Then $U=S^I$ is a
  profinite $A$-algebra and $\hat{\Cl S}_U$ is isomorphic with~$S$ as
  a topological semigroup.
\end{Lemma}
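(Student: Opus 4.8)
The plan is to verify the two assertions in turn. First, that $U=S^I$ is a profinite $A$-algebra. Since $S$ is profinite, we may write $S=\varprojlim S_i$ as an inverse limit of finite semigroups with onto continuous connecting homomorphisms; because $S$ is $A$-generated, each $S_i$ is $A$-generated, and the adjunction of an isolated identity is functorial (as recalled in the excerpt for the operator $\underline{\phantom{S}}^I$), so $S^I=\varprojlim S_i^I$ as an inverse limit of finite $A$-algebras. Hence $U=S^I$ is profinite as an $A$-algebra. This also gives a concrete inverse-limit presentation $U=\varprojlim S_i^I$ to which we may apply the construction of $\hat{\Cl S}_U$; by Corollary~\ref{c:completion}, the resulting semigroup is independent of this choice, so working with this presentation loses no generality.

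The heart of the lemma is the isomorphism $\hat{\Cl S}_U\simeq S$. Here the key observation is the Cayley remark already made in the excerpt: for an $A$-generated semigroup $S$, one has $\Cl S_{S^I}\simeq S$, since the action of $w\in A^+$ on $S^I$ by left multiplication is faithful (if $w\cdot t=w'\cdot t$ for all $t\in S^I$, then taking $t=1$ gives $w=w'$ in $S$, using $A$-generation to identify $\alpha_S$-values with elements of $S$). Applying this fact levelwise to each finite quotient $S_i$ gives $\Cl S_{S_i^I}\simeq S_i$, and these isomorphisms are compatible with the connecting homomorphisms $\bar\varphi_{ij}$, which by the functoriality established earlier in the section correspond to the given connecting homomorphisms $S_i\to S_j$. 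Passing to the inverse limit then yields
\[
  \hat{\Cl S}_U=\varprojlim\Cl S_{S_i^I}\simeq\varprojlim S_i=S
\]
as topological semigroups.

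The main obstacle I expect is the bookkeeping needed to make the levelwise isomorphisms $\Cl S_{S_i^I}\simeq S_i$ genuinely compatible with the inverse system, i.e.\ to check that the square relating $\Cl S_{S_i^I}\to\Cl S_{S_j^I}$ to $S_i\to S_j$ actually commutes and respects the chosen generators from $A$. This is precisely where the uniqueness clause for $\bar\varphi$ (the unique homomorphism making the relevant triangle commute) and the relation $\overline{\psi\circ\varphi}=\bar\psi\circ\bar\varphi$, both noted in the excerpt, do the work: they guarantee that the isomorphisms assemble into a morphism of inverse systems rather than a mere collection of isomorphisms. A secondary point is confirming that the inverse limit of isomorphisms of finite (hence compact) semigroups is again an isomorphism, which follows from compactness. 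Once these compatibilities are in place, the identification $\hat{\Cl S}_U\simeq S$ is immediate, and the continuity and the respect for the generators are automatic from the construction.
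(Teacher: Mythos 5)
Your proposal is correct and follows essentially the same route as the paper: write $S=\varprojlim S_i$, observe $S^I=\varprojlim S_i^I$, apply the Cayley identification $\Cl S_{S_i^I}\simeq S_i$ levelwise, and pass to the inverse limit. The paper states this in one line; your version merely makes explicit the compatibility checks (via the uniqueness of $\bar\varphi$ and $\overline{\psi\circ\varphi}=\bar\psi\circ\bar\varphi$) that the paper leaves implicit.
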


\begin{proof}
  Let $S=\varprojlim S_i$ be a description of $S$ as an inverse limit
  of finite semigroups with onto connecting homomorphisms. Then, we
  have $U=S^I=\varprojlim S_i^I$ and so %
  $\hat{\Cl S}_U=\varprojlim \Cl S_{S^I_i}\simeq\varprojlim S_i=S$.
\end{proof}

Recall that $\Cl S_U\subseteq \Cl T_U$. In other words, 
there is an action of $\Cl S_U$ on $U$, namely 
the mapping $\Cl S_U \times U \rightarrow U$
which maps $(\alpha_U(w),u)$ to $w\cdot u$.
Elements of $\hat{\Cl S}_U$ may also be viewed as transformations
of~$U$ as the next result shows. 

\begin{Prop}
  \label{p:profinite-action}
  For a profinite $A$-algebra $U$, there is a natural continuous
  action of $\hat{\Cl S}_U$ on $U$, that is a continuous mapping
  $\beta:\hat{\Cl S}_U\times U\to U$ such that 
  $\beta(\alpha_U(w),u)=w\cdot u$ for every $w\in A^+$. 
  The corresponding mapping
  $\hat{\Cl S}_U\to\Cl T_U$, which maps $s$ to $\beta(s,\_)$, is injective.
\end{Prop}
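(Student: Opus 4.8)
The plan is to build $\beta$ as an inverse limit of the obvious actions at each finite level. Fix a description $U=\varprojlim U_i$ as an inverse limit of finite $A$-algebras with onto connecting homomorphisms $\varphi_{ij}:U_i\to U_j$ (for $i\ge j$), so that $\hat{\Cl S}_U=\varprojlim\Cl S_{U_i}$ with the connecting homomorphisms $\bar\varphi_{ij}$. For each $i$, since $\Cl S_{U_i}\subseteq\Cl T_{U_i}$, there is a natural action $\beta_i:\Cl S_{U_i}\times U_i\to U_i$ sending $(s,u)$ to $s(u)$; being a map between finite discrete spaces, it is continuous.

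First I would verify that these finite actions are compatible with the connecting morphisms, that is, that for $i\ge j$ the square with horizontal maps $\beta_i,\beta_j$ and vertical maps $\bar\varphi_{ij}\times\varphi_{ij}$ and $\varphi_{ij}$ commutes. Since $\Cl S_{U_i}$ is generated by the $a_{U_i}$, it suffices to check this on an element $s=\alpha_{U_i}(w)$ with $w\in A^+$. Then $\varphi_{ij}(\beta_i(s,u))=\varphi_{ij}(w\cdot u)=w\cdot\varphi_{ij}(u)$ because $\varphi_{ij}$ is a homomorphism of $A$-algebras, while $\bar\varphi_{ij}(s)=\alpha_{U_j}(w)$ because $\bar\varphi_{ij}$ respects the generators from $A$, so $\beta_j(\bar\varphi_{ij}(s),\varphi_{ij}(u))=w\cdot\varphi_{ij}(u)$ as well.

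With the squares commuting, I would pass to the inverse limit. Using that inverse limits commute with finite products, $\varprojlim(\Cl S_{U_i}\times U_i)$ is canonically $\hat{\Cl S}_U\times U$, and the $\beta_i$ assemble into a continuous map $\beta:\hat{\Cl S}_U\times U\to U$ characterised by $\varphi_i\circ\beta=\beta_i\circ(\pi_i\times\varphi_i)$, where $\pi_i:\hat{\Cl S}_U\to\Cl S_{U_i}$ is the projection. That $\beta$ extends the $A^+$-action then follows by projecting: for $w\in A^+$, projecting $\beta(\alpha_U(w),u)$ to level $i$ gives $\beta_i(\alpha_{U_i}(w),\varphi_i(u))=w\cdot\varphi_i(u)=\varphi_i(w\cdot u)$, and since this holds for every $i$ we conclude $\beta(\alpha_U(w),u)=w\cdot u$. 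The action identity $\beta(st,u)=\beta(s,\beta(t,u))$ holds on the dense subsemigroup $\Cl S_U$, where it is the genuine $A^+$-action, and extends by continuity, $\Cl S_U$ being dense in $\hat{\Cl S}_U$ because its projection to each $\Cl S_{U_i}$ is onto.

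For injectivity, suppose $s,t\in\hat{\Cl S}_U$ satisfy $\beta(s,u)=\beta(t,u)$ for all $u\in U$. To see $s=t$ it suffices to show $\pi_i(s)=\pi_i(t)$ for every $i$, and since these lie in $\Cl T_{U_i}$ it is enough to check they agree at each point of $U_i$. Here I would use that $\varphi_i:U\to U_i$ is onto: writing an arbitrary point of $U_i$ as $\varphi_i(u)$, we have $\pi_i(s)(\varphi_i(u))=\varphi_i(\beta(s,u))=\varphi_i(\beta(t,u))=\pi_i(t)(\varphi_i(u))$. I expect the points requiring most care to be the commutativity of the compatibility squares, which is exactly where the fact that $\bar\varphi_{ij}$ respects generators is used, and, for injectivity, the surjectivity of the projections $\varphi_i$, which is what lets pointwise equality of the limit action descend to equality of transformations at each finite level.
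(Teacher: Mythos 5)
Your proposal is correct and follows essentially the same route as the paper: both construct $\beta$ by composing the projections $\hat{\Cl S}_U\times U\to\Cl S_{U_i}\times U_i$ with the finite-level actions, check compatibility with the connecting maps on elements $\alpha_{U_i}(w)$, pass to the inverse limit, and derive injectivity from the surjectivity of the $\varphi_i$. Your additional verification of the action identity $\beta(st,u)=\beta(s,\beta(t,u))$ by density is a harmless bonus that the paper does not spell out.
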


For $s\in\hat{\Cl S}_U$ and $u\in U$, we denote the image under
$\beta$ of the pair $(s,u)$ also by $s\cdot u$.
  
\begin{proof}
  Let $U=\varprojlim U_i$ be an inverse limit of finite $A$-algebras
  with onto connecting homomorphisms $\varphi_{ij}$. For the natural
  continuous homomorphism $\varphi_i:U\to U_i$, we have a
  corresponding continuous homomorphism %
  $\hat\varphi_i:\hat{\Cl S}_U\to\Cl S_{U_i}$. The two mappings
  provide a continuous mapping %
  $\hat\varphi_i\times\varphi_i:\hat{\Cl S}_U\times U %
  \to\Cl S_{U_i}\times U_i $. %
  Composing with the action $\Cl S_{U_i}\times U_i\to U_i$ gives a
  continuous mapping $\psi_i:\hat{\Cl S}_U\times U\to U_i$. From the
  definition of these mappings, it is clear that, if $i\ge j$, then
  $\varphi_{ij}\circ \psi_i=\psi_j$. Hence, the mappings $\psi_i$
  induce a continuous mapping $\beta:\hat{\Cl S}_U\times U\to U$ such
  that the following diagram commutes for every $i$:
  \begin{displaymath}
    \xymatrix{
      \hat{\Cl S}_U\times U \ar@{-->}[r]^(.6)\beta
      \ar[d]^{\hat\varphi_i\times\varphi_i}
      &
      U \ar[d]^{\varphi_i} \\
      \Cl S_{U_i}\times U_i \ar[r]
      & **[r]{U_i.}
    }
  \end{displaymath}
  For $s\in \Cl S_U$, that is $s=\alpha_U(w)$ with $w\in A^+$, we have
  $\varphi_i(\beta(s,u))=w\cdot \varphi_i(u)$, and consequently 
  $\beta(s,u)=w\cdot u$.
  
  Given distinct $s,t\in\hat{\Cl S}_U$, there is $i$ such that
  $\hat\varphi_i(s)\ne\hat\varphi_i(t)$. Since $\Cl S_{U_i}$ is a
  subsemigroup of $\Cl T_{U_i}$, there is $u_i\in U_i$ such that
  $\hat{\varphi}_i(s)(u_i)\ne\hat{\varphi}_i(t)(u_i)$. From the
  commutativity of the above diagram it follows that
  $\beta(s,u)\ne\beta(t,u)$ for every $u\in\varphi_i^{-1}(u_i)$, which
  proves the required injectivity.
\end{proof}

Notice that, for every $s\in\hat{\Cl S}_U$, the transformation 
$\beta(s,\_)$ is continuous as we show in the next section. 
This aspect is not important for the purpose of this section but plays
a role later in the paper.

Let \pv{Un} be the pseudovariety consisting of all finite $A$-algebras
and let $X=\{x\}$. Abusing notation, we may write \Om xV instead of
\Om XV for a pseudovariety \pv V of $A$-algebras.
  
\begin{Lemma}
  \label{l:iso}
  The $A$-generated profinite semigroup %
  $S=\hat{\Cl S}_{\Om x{Un}}$ is free.
\end{Lemma}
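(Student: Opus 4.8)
The plan is to identify $S=\hat{\Cl S}_{\Om x{Un}}$ with the free profinite semigroup on $A$, i.e.\ to show that the canonical map into $S$ from that free object is an isomorphism. Write $\Om AS$ for the free profinite semigroup on $A$, the free pro-$\pv S$ semigroup where $\pv S$ is the pseudovariety of all finite semigroups; recall that $\Om AS=\varprojlim T$ is the inverse limit of all finite $A$-generated semigroups $T$, taken with the connecting homomorphisms respecting generators from $A$. Since $\Cl S_{\Om x{Un}}$ is generated by the images of the elements of $A$ and is dense in $\hat{\Cl S}_{\Om x{Un}}=S$, the semigroup $S$ is $A$-generated, so there is a unique continuous homomorphism $\eta:\Om AS\to S$ respecting generators; its image is closed and contains the generators, hence equals $S$. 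As a continuous onto homomorphism between profinite semigroups is an isomorphism as soon as it is injective, it suffices to prove that $\eta$ is injective.

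The core of the argument is to realize every finite $A$-generated semigroup $T$ as a continuous quotient of $S$ respecting generators. Given such a $T$, with structure homomorphism $\alpha_T:A^+\to T$, form the finite $A$-algebra $T^I$ as before, with $a\cdot s=\alpha_T(a)\,s$. Starting from the adjoined identity $1$ one has $a\cdot 1=\alpha_T(a)$, and more generally $w\cdot 1=\alpha_T(w)$ for every $w\in A^+$; since $T$ is $A$-generated, the $A$-subalgebra of $T^I$ generated by $1$ is all of $T^I$. Hence, by the universal property~\eqref{eq:universal-free-pro-V} of the free profinite $A$-algebra $\Om x{Un}$, the assignment $x\mapsto 1$ extends to a continuous homomorphism $\pi_T:\Om x{Un}\to T^I$, whose image is a closed $A$-subalgebra containing $1$ and is therefore all of $T^I$; thus $\pi_T$ is onto.

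I would then feed $\pi_T$ into Lemma~\ref{l:completion} to obtain a continuous semigroup homomorphism $\hat\pi_T:S=\hat{\Cl S}_{\Om x{Un}}\to\hat{\Cl S}_{T^I}$ respecting the generators from $A$. By Lemma~\ref{l:Cayley-profinite} applied to the finite semigroup $T$, the target $\hat{\Cl S}_{T^I}$ is isomorphic with $T$, so composing yields a continuous homomorphism $q_T':S\to T$ respecting generators, which is therefore onto. To conclude, I would compare $q_T'\circ\eta$ with the canonical projection $q_T:\Om AS\to T$: both are continuous homomorphisms $\Om AS\to T$ respecting the generators from $A$, so by freeness of $\Om AS$ they coincide. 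Now if $u\ne v$ in $\Om AS$, then $u,v$ are separated by some finite $A$-generated $T$, i.e.\ $q_T(u)\ne q_T(v)$; since $q_T=q_T'\circ\eta$, it follows that $\eta(u)\ne\eta(v)$. Hence $\eta$ is injective, and therefore an isomorphism, which proves that $S$ is free.

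The step I expect to be the main obstacle is the middle one: correctly passing from the onto homomorphism $\pi_T$ of $A$-algebras to a generator-respecting semigroup homomorphism $S\to T$ and identifying the target as $T$ itself. Everything there rests on Lemma~\ref{l:completion} together with the Cayley identification $\hat{\Cl S}_{T^I}\simeq T$ of Lemma~\ref{l:Cayley-profinite}; the remaining verifications — that $T^I$ is generated by $1$, that $\pi_T$ and $q_T'$ are onto, and that $q_T'\circ\eta=q_T$ by freeness of $\Om AS$ — are routine diagram chases.
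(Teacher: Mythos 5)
Your proposal is correct and follows essentially the same route as the paper: for each $A$-generated finite semigroup $T$ you map $x\mapsto 1$ to get an onto homomorphism of $A$-algebras $\Om x{Un}\to T^I$, feed it into Lemma~\ref{l:completion}, and identify $\hat{\Cl S}_{T^I}$ with $T$. The paper's proof is just terser, leaving implicit the final step (that an $A$-generated profinite semigroup admitting generator-respecting onto maps to every $A$-generated finite semigroup is free) which you spell out via the canonical map $\eta:\Om AS\to S$ and a separation argument.
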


\begin{proof}
  Let $T$ be an $A$-generated finite semigroup. Consider the unique
  continuous homomorphism of $A$-algebras $\varphi:\Om x{Un}\to T^I$
  mapping $x$ to~$1$. By Lemma~\ref{l:completion}, 
  it induces an onto continuous homomorphism of
  semigroups $S\to\Cl S_{T^I}=T$ respecting the generators from~$A$.
  Hence, $S$~is freely generated by $A$ as a profinite semigroup.
\end{proof}

Denote by \emph{$\eta$} the unique continuous isomorphism %
$\Om AS\to\hat{\Cl S}_{\Om x{Un}}$ respecting generators.

Consider now a pseudovariety \pv V of semigroups. There is an
associated pseudovariety \emph{$\pv V^u$} of $A$-algebras which is
defined by the pseudoidentities of $A$-algebras $\eta(u)\cdot
x=\eta(v)\cdot x$, where $u=v$ runs over all semigroup
pseudoidentities on~$A$ satisfied by~\pv V. Note that, in particular,
$\pv{Un}=\pv S^u$. Let $\lambda:\Om x{Un}\to\Om xV^u$ and %
$\psi:\Om AS\to\Om AV$ be the natural onto homomorphisms.

By definition, $\eta$ induces a continuous homomorphism %
$\eta_\pv V:\Om AV\to\hat{\Cl S}_{\Om xV^u}$. Indeed, let %
$u,v\in\Om AS$. If $\psi(u)=\psi(v)$, then the pseudoidentity $u=v$
holds in~\pv V, so that by definition of $\pv V^u$, the pseudoidentity
$\eta(u)\cdot x=\eta(v)\cdot x$ holds in~$\pv V^u$; substituting an
arbitrary element of~\Om x{Un}
for~$x$, it follows that
$\bar\lambda(\eta(u))=\bar\lambda(\eta(v))$. Hence, there is indeed a
continuous homomorphism $\eta_{\pv V}$ such that the following diagram
commutes:
\begin{displaymath}
  \xymatrix{ 
    \Om AS \ar[r]^(.4)\eta \ar[d]^\psi 
    & \,\hat{\Cl S}_{\Om x{Un}}\, \ar[d]^{\hat\lambda} \\
    \Om AV \ar[r]^(.45){\eta_\pv V} 
    & \,\hat{\Cl S}_{\Om xV^u}.}
\end{displaymath}
The continuity of $\eta_\pv V$ follows from the compactness of \Om AS.

\begin{Prop}
  \label{p:iso2}
  There is a continuous isomorphism of profinite $A$-algebras %
  $\varphi_{\pv V}:\Om xV^u\to(\Om AV)^I$ such that, for every %
  $w\in\Om xV^u$, the equality %
  \begin{equation}
    \label{eq:iso2-1}
    w=\eta_{\pv V}^I(\varphi_{\pv V}(w))\cdot x
  \end{equation}
  holds.
\end{Prop}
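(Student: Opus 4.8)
The plan is to build an explicit continuous homomorphism in the reverse direction, prove it is an isomorphism, and then recognize $\varphi_{\pv V}$ as its inverse so that~\eqref{eq:iso2-1} becomes automatic. Using the continuous action $\beta$ of Proposition~\ref{p:profinite-action} (extended to $(\hat{\Cl S}_{\Om xV^u})^I$ by letting the adjoined identity act trivially), I would define
\begin{align*}
  \theta:(\Om AV)^I &\to \Om xV^u\\
  \sigma &\mapsto \eta_{\pv V}^I(\sigma)\cdot x.
\end{align*}
Then $\theta$ is continuous, being a composite of $\eta_{\pv V}^I$, the continuous action $\beta$, and evaluation at the fixed point $x$. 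Since $\eta_{\pv V}$ respects generators, $\eta_{\pv V}(a)$ is the operation $a_{\Om xV^u}$; using that $\beta$ is an action, $\theta(a\sigma)=\eta_{\pv V}(a)\cdot\bigl(\eta_{\pv V}^I(\sigma)\cdot x\bigr)=a\cdot\theta(\sigma)$ for $a\in A$, so $\theta$ is a homomorphism of $A$-algebras. Its image is a closed $A$-subalgebra containing $\theta(1)=x$, and since $A^*\cdot x$ is dense in $\Om xV^u$, it follows that $\theta$ is onto.

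Next I would verify that $(\Om AV)^I$ is itself a pro-$\pv V^u$ algebra, so that $\theta$ can be inverted through a universal property. By Lemma~\ref{l:Cayley-profinite} we may identify $\hat{\Cl S}_{(\Om AV)^I}$ with $\Om AV$, and under this identification the natural generator-respecting map $\Om AS\to\hat{\Cl S}_{(\Om AV)^I}$ is exactly $\psi$. Consequently, for a defining pseudoidentity $\eta(u)\cdot x=\eta(v)\cdot x$ of $\pv V^u$ (so that $u=v$ holds in $\pv V$), the operation $\eta(u)$ acts on $(\Om AV)^I$ as left multiplication by $\psi(u)$; as $\psi(u)=\psi(v)$, the two sides agree on every element and the pseudoidentity holds. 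Hence $(\Om AV)^I$ satisfies all the pseudoidentities defining $\pv V^u$ and is therefore pro-$\pv V^u$.

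With this in hand, the universal property~\eqref{eq:universal-free-pro-V} of $\Om xV^u$ produces a continuous homomorphism of $A$-algebras $\varphi_{\pv V}:\Om xV^u\to(\Om AV)^I$ determined by $\varphi_{\pv V}(x)=1$. I would then show that $\theta$ and $\varphi_{\pv V}$ are mutually inverse by a density-and-uniqueness argument. The composite $\theta\circ\varphi_{\pv V}$ is a continuous $A$-algebra endomorphism of $\Om xV^u$ fixing $x$, hence the identity since $A^*\cdot x$ is dense. Likewise $\varphi_{\pv V}\circ\theta$ is a continuous $A$-algebra endomorphism of $(\Om AV)^I$ fixing $1$, and since the set of finite words $A^*$ (that is, $A^*\cdot 1$) is dense in $(\Om AV)^I$, it too is the identity. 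Therefore $\varphi_{\pv V}=\theta^{-1}$ is a continuous isomorphism, and~\eqref{eq:iso2-1} is simply the identity $w=\theta\bigl(\varphi_{\pv V}(w)\bigr)=\eta_{\pv V}^I(\varphi_{\pv V}(w))\cdot x$.

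The main obstacle I expect is the middle step: correctly matching the semigroup pseudoidentities valid in $\pv V$ with the one-variable $A$-algebra pseudoidentities defining $\pv V^u$, and in particular justifying that the operation symbol $\eta(u)$ acts on $(\Om AV)^I$ precisely as left multiplication by $\psi(u)$. This rests on the identification $\hat{\Cl S}_{(\Om AV)^I}\cong\Om AV$ from Lemma~\ref{l:Cayley-profinite} together with the injectivity of the action in Proposition~\ref{p:profinite-action}, which guarantees that equality of operations on $(\Om AV)^I$ is already detected in $\hat{\Cl S}_{(\Om AV)^I}$.
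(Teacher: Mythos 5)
Your proposal is correct and follows essentially the same route as the paper: the map you call $\theta$ is exactly the paper's $\nu(v)=\eta_{\pv V}^I(v)\cdot x$, the verification that $(\Om AV)^I$ is pro-$\pv V^u$ (via left multiplication by $\psi(u)$) matches the paper's argument, and $\varphi_{\pv V}$ is obtained from the same universal property. The only difference is cosmetic: you check both composites explicitly, whereas the paper verifies only $\nu\circ\varphi_{\pv V}=\mathrm{id}$ and defers the full inverse statement to the subsequent corollary.
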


In particular, $(\Om AS)^I$ is the free 1-generated profinite
$A$-algebra.
  
\begin{proof}[Proof of Proposition~\ref{p:iso2}]
  Given $u,v\in\Om AS$ such that the pseudoidentity $u=v$ is satisfied
  by~\pv V, $uw=vw$ is also valid in~\pv V for all $w\in\Om AS$. Hence
  the profinite $A$-algebra $(\Om AV)^I$ satisfies the
  defining pseudoidentities for~$\pv V^u$ and, whence, it is a
  pro-$\pv V^u$ $A$-algebra. Hence,  
  there is a continuous homomorphism of profinite $A$-algebras %
  $\varphi_{\pv V}:\Om xV^u\to(\Om AV)^I$ mapping $x$ to the identity
  element of~$(\Om AV)^I$.

  Let $w$ be an arbitrary point of the $A$-algebra $\Om xV^u$. To
  establish that the equality \eqref{eq:iso2-1} holds and, therefore,
  that $\varphi_{\pv V}$ is an isomorphism, consider the following
  diagram:
  \begin{equation}
    \label{eq:iso2-2}
    \xymatrix{
      (\Om AV)^I \ar[r]^{\eta_{\pv V}^I} \ar[rd]^\nu
      &
      (\hat{\Cl S}_{\Om xV^u})^I \ar[d]^{\cdot x}
      \\
      \Om xV^u \ar[u]^{\varphi_{\pv V}} \ar@{-->}[r]^{\mathrm{id}}
      &
      \Om xV^u
    }
  \end{equation}
  where $\nu$ is defined to make the upper triangle commutative. The
  desired equality means that the lower triangle also commutes. To
  prove it, we first claim that the mapping $\nu$, sending each
  $v\in(\Om AV)^I$ to %
  $\eta_{\pv V}^I(v)\cdot x$, is a homomorphism of $A$-algebras.
  Indeed, given $v\in(\Om AV)^I$ and $a\in A$, we have
  \begin{displaymath}
    \eta_{\pv V}^I(av)\cdot x 
    =(\eta_{\pv V}^I(a)\eta_{\pv V}^I(v))\cdot x 
    =\eta_{\pv V}^I(a)\cdot(\eta_{\pv V}^I(v)\cdot x) 
    =a_{\Om XV^u}(\eta_{\pv V}^I(v)\cdot x)
  \end{displaymath}
  Since $\varphi_{\pv V}$ is also a continuous homomorphism of
  $A$-algebras and $\nu(\varphi_{\pv V}(x))=x$, we deduce that 
  Diagram \eqref{eq:iso2-2} commutes.
\end{proof}

The following is an easy consequence of the proof of
Proposition~\ref{p:iso2}.

\begin{Cor}
  \label{c:iso1}
  The mapping $\eta_\pv V$ is an isomorphism.
\end{Cor}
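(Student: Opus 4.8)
The plan is to show that the continuous homomorphism $\eta_{\pv V}\colon\Om AV\to\hat{\Cl S}_{\Om xV^u}$ is a bijection; since both semigroups are compact and Hausdorff, a continuous bijection between them is automatically a homeomorphism, and hence a topological isomorphism. So it remains only to verify surjectivity and injectivity, and I expect the latter to be the crux.

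For surjectivity I would use that $\eta_{\pv V}$ respects the chosen generators from~$A$, as is clear from the commutative diagram defining it, since the maps $\psi$, $\eta$ and $\hat\lambda$ all do. Writing $\Om xV^u=\varprojlim U_i$ as an inverse limit of finite $A$-algebras, the target $\hat{\Cl S}_{\Om xV^u}=\varprojlim\Cl S_{U_i}$ is an inverse limit of the $A$-generated finite semigroups $\Cl S_{U_i}$ with generator-respecting connecting homomorphisms, and is therefore an $A$-generated profinite semigroup. Consequently the image of $\eta_{\pv V}$ is a closed subsemigroup containing all the generators, whence it equals all of $\hat{\Cl S}_{\Om xV^u}$.

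For injectivity I would exploit the identity~\eqref{eq:iso2-1} together with the fact, proved in Proposition~\ref{p:iso2}, that $\varphi_{\pv V}\colon\Om xV^u\to(\Om AV)^I$ is an \emph{isomorphism} of $A$-algebras. Suppose $s,t\in\Om AV$ satisfy $\eta_{\pv V}(s)=\eta_{\pv V}(t)$. Since $\varphi_{\pv V}$ is onto, choose $w_s,w_t\in\Om xV^u$ with $\varphi_{\pv V}(w_s)=s$ and $\varphi_{\pv V}(w_t)=t$. Applying~\eqref{eq:iso2-1} yields $w_s=\eta_{\pv V}^I(s)\cdot x=\eta_{\pv V}^I(t)\cdot x=w_t$, and the injectivity of $\varphi_{\pv V}$ then forces $s=t$. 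Here one uses that for $s\in\Om AV$ the monoid extension $\eta_{\pv V}^I$ agrees with $\eta_{\pv V}$, and that the expression $\eta_{\pv V}(s)\cdot x$ refers to the continuous action of $\hat{\Cl S}_{\Om xV^u}$ on $\Om xV^u$ provided by Proposition~\ref{p:profinite-action}.

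The main obstacle is precisely this injectivity step: the whole argument hinges on having the exact relation~\eqref{eq:iso2-1} at hand and on $\varphi_{\pv V}$ being a genuine isomorphism rather than merely a homomorphism. Once these are secured, injectivity follows from the short substitution above, and the compactness argument closes the proof. This is why the corollary drops out so directly from the proof of Proposition~\ref{p:iso2}.
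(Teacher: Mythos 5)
Your proof is correct and follows essentially the same route as the paper: both rest on Proposition~\ref{p:iso2}, the paper deducing the result from the fact that $\nu=\varphi_{\pv V}^{-1}$ is an isomorphism in Diagram~\eqref{eq:iso2-2}, while you unwind that same commutativity into the explicit injectivity computation via~\eqref{eq:iso2-1}. Your explicit surjectivity argument (the image is a closed subsemigroup containing the generators from~$A$) is a useful addition, as the paper leaves that step implicit.
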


\begin{proof}
  From the commutativity of Diagram \eqref{eq:iso2-2}, we see that
  $\nu$ is an isomorphism of $A$-algebras, namely %
  $\nu=\varphi_{\pv V}^{-1}$. It follows that so is $\eta_{\pv V}^I$
  and, therefore also $\eta_{\pv V}$.
\end{proof}

Proposition~\ref{p:iso2} is also instrumental in establishing the
following key result for our purposes.

\begin{Lemma}
  \label{l:fi}
  Let $\theta$ be a closed congruence on~\Om AV and $\theta'$ its
  extension to the semigroup $(\Om AV)^I$ that is obtained by making 1
  a singleton class. In view of Proposition~\ref{p:iso2}, that
  determines an equivalence relation $\tilde{\theta}$ on $\Om xV^u$.
  The relation $\tilde{\theta}$ is a fully invariant closed congruence
  on~$\Om xV^u$.
\end{Lemma}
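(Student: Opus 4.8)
The plan is to push the entire statement through the isomorphism $\varphi:=\varphi_{\pv V}$ of Proposition~\ref{p:iso2}, so that $\tilde\theta=(\varphi\times\varphi)^{-1}(\theta')$, and then reduce full invariance of $\tilde\theta$ to a transparent property of the semigroup congruence $\theta'$. The topological and congruence parts I expect to be routine. Since $\varphi$ is a continuous bijection between compact Hausdorff spaces it is a homeomorphism, so it suffices to check that $\theta'$ is a closed $A$-algebra congruence on $(\Om AV)^I$: as the adjoined identity is isolated, $\Om AV$ is clopen in $(\Om AV)^I$, whence $\theta$ is closed and $\{(1,1)\}$ is clopen in the product $(\Om AV)^I\times(\Om AV)^I$, so that $\theta'=\theta\cup\{(1,1)\}$ is closed; and, being the restriction of a semigroup congruence, $\theta'$ is in particular stable under left multiplication by the letters of~$A$, hence a congruence of the $A$-algebra $(\Om AV)^I$. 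Transporting through the $A$-algebra isomorphism $\varphi$ then gives that $\tilde\theta$ is a closed congruence of the $A$-algebra $\Om xV^u$.

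The substance lies in full invariance. By the universal property, every continuous endomorphism of $\Om xV^u$ is a map $\sigma_p$ determined by $\sigma_p(x)=p$, and as $p$ runs over $\Om xV^u$ the element $r:=\varphi(p)$ runs over all of $(\Om AV)^I$. The key step I would carry out is to show that the conjugate $f_p:=\varphi\circ\sigma_p\circ\varphi^{-1}$ is nothing but right multiplication by $r$ on the monoid $(\Om AV)^I$. Writing $E:=\eta_{\pv V}^I$, an isomorphism by Corollary~\ref{c:iso1}, equation~\eqref{eq:iso2-1} reads $\varphi^{-1}(q)=E(q)\cdot x$. Since $\sigma_p$ is a continuous $A$-algebra homomorphism, the action of $\hat{\Cl S}_{\Om xV^u}$ is continuous (Proposition~\ref{p:profinite-action}), and the letters of~$A$ generate a dense subsemigroup, one obtains the commutation law $\sigma_p(s\cdot u)=s\cdot\sigma_p(u)$ for all $s$ and $u$; combining this with $p=E(r)\cdot x$ and the fact that $E$ is a monoid homomorphism yields
\[
  f_p(q)=\varphi\bigl(\sigma_p(E(q)\cdot x)\bigr)
        =\varphi\bigl(E(q)\cdot p\bigr)
        =\varphi\bigl(E(qr)\cdot x\bigr)=qr .
\]

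With this identification in hand, the conclusion is immediate: $\tilde\theta$ is fully invariant precisely when $\theta'$ is stable under right multiplication by every element of $(\Om AV)^I$, and this holds automatically because $\theta'$ is a semigroup congruence on the monoid $(\Om AV)^I$ (the singleton class $\{1\}$ being dealt with trivially). The hard part is thus concentrated entirely in the middle paragraph, namely in verifying the commutation law $\sigma_p(s\cdot u)=s\cdot\sigma_p(u)$ and assembling it into the formula $f_p(q)=qr$; once continuous endomorphisms of $\Om xV^u$ are recognised as right translations of $(\Om AV)^I$, nothing further remains.
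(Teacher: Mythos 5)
Your argument is correct and follows essentially the same route as the paper: the paper's displayed computation $\psi(u)=\eta_{\pv V}(\varphi_{\pv V}(u))\cdot\psi(x)=\eta_{\pv V}\bigl(\varphi_{\pv V}(u)\varphi_{\pv V}(\psi(x))\bigr)\cdot x$ is exactly your identification of the conjugated endomorphism with right translation by $\varphi_{\pv V}(\psi(x))$, after which full invariance reduces, as in your last paragraph, to stability of $\theta'$ under right multiplication. If anything, you are slightly more explicit than the paper on two points it leaves tacit — the closedness of $\theta'=\theta\cup\{(1,1)\}$ and the density/continuity justification of the commutation law $\sigma_p(s\cdot u)=s\cdot\sigma_p(u)$ — but the decomposition and the key steps coincide.
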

  
\begin{proof}
  We first show that $\tilde{\theta}$ is a congruence. Suppose that
  $u,v\in\Om xV^u$ are $\tilde{\theta}$-equivalent and let
  $\varphi_{\pv V}:\Om xV^u\to(\Om AV)^I$ be the isomorphism of
  $A$-algebras given by Proposition~\ref{p:iso2}. Then, the pair
  $(\varphi_{\pv V}(u),\varphi_{\pv V}(v))$ belongs to~$\theta'$ and
  so does %
  $(\varphi_{\pv V}(a\cdot u),\varphi_{\pv V}(a\cdot v)) %
  =(a\varphi_{\pv V}(u),a\varphi_{\pv V}(v))$ for every $a\in A$.
  Hence, $(a\cdot u,a\cdot v)$ belongs to $\tilde{\theta}$.
    
  Next, we verify that $\tilde{\theta}$ is fully invariant. Given a
  continuous endomorphism $\psi$ of~$\Om xV^u$ and a pair
  $(u,v)\in\tilde{\theta}$, if one of $u$ and $v$ is~$x$ then so is the
  other, and so obviously $(\psi(u),\psi(v))\in\tilde{\theta}$. So, we
  may assume that $u,v\ne x$. Let $w=\psi(x)$; we may assume that
  $w\ne x$. By~Proposition~\ref{p:iso2}, we have
  \begin{align*}
    \psi(u)
    &=\psi(\eta_\pv V(\varphi_{\pv V}(u))\cdot x) %
      =\eta_\pv V(\varphi_{\pv V}(u))\cdot w
      =\eta_\pv V(\varphi_{\pv V}(u)\varphi_{\pv V}(w))\cdot x\\
    &\mathrel{\tilde{\theta}} %
      \eta_\pv V(\varphi_{\pv V}(v)\varphi_{\pv V}(w))\cdot x
      =\eta_\pv V(\varphi_{\pv V}(v))\cdot w
      =\psi(\eta_\pv V(\varphi_{\pv V}(v))\cdot x) = \psi(v).\qed\popQED
  \end{align*}
\end{proof}
  
After all the above preparation, we are ready for our counterexample
to \cite[Conjecture~3.3]{Almeida&Klima:2017a} for unary algebras.

\begin{Thm}
  \label{t:ce}
  There is a pseudovariety \pv U of unary algebras and a closed
  fully invariant congruence $\theta$ on~\Om XU that is not
  profinite.
\end{Thm}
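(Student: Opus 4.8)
The plan is to transport the non-profinite closed congruence of Section~\ref{sec:non-profinite-closed} across the correspondence between profinite $A$-algebras and $A$-generated profinite semigroups developed above. I take $A=\{a,b\}$, I work with the semigroup pseudovariety $\pv V=\pv K$, and I set $\pv U=\pv K^u$, the pseudovariety of $A$-algebras associated with $\pv K$. Let $\rho$ denote the closed congruence on $\Om AK$ generated by the pair $(ab^\omega,ba^\omega)$ of Section~\ref{sec:non-profinite-closed}; by Corollary~\ref{c:K-closed-cong-2} the quotient semigroup $\Om AK/\rho$ is not profinite. Applying Lemma~\ref{l:fi} with $\pv V=\pv K$ to the closed congruence $\rho$ yields a fully invariant closed congruence $\theta:=\tilde\rho$ on $\Om XU=\Om xK^u$, and this $\theta$ is the congruence we seek. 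By construction it is already fully invariant and closed, so the whole issue is to check that it is \emph{not} profinite; note that full invariance comes for free from Lemma~\ref{l:fi}, even though $\rho$ itself is not fully invariant.

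To see that $\theta$ is not profinite, I would use the isomorphism $\varphi_{\pv K}:\Om xK^u\to(\Om AK)^I$ of Proposition~\ref{p:iso2}. By the very definition of $\tilde\rho$ in Lemma~\ref{l:fi}, the relation $\theta$ is the pullback along $\varphi_{\pv K}$ of the extension $\rho'$ of $\rho$ to $(\Om AK)^I$ that keeps the adjoined identity a singleton class. Hence $\varphi_{\pv K}$ descends to a homeomorphism of topological $A$-algebras
\begin{displaymath}
  \Om XU/\theta\ \cong\ (\Om AK)^I/\rho'\ \cong\ (\Om AK/\rho)^I,
\end{displaymath}
so that $\Om XU/\theta$ is, up to homeomorphism, nothing but the non-profinite semigroup $\Om AK/\rho$ with one adjoined isolated identity.

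It remains to deduce non-profiniteness purely topologically. Every profinite algebra is zero-dimensional, so it is enough to show that $\Om XU/\theta$ is not zero-dimensional. The adjoined identity of $(\Om AK/\rho)^I$ is an isolated point, so $\Om AK/\rho$ sits as a subspace of $(\Om AK/\rho)^I\cong\Om XU/\theta$; since zero-dimensionality is inherited by subspaces, it suffices to show that $\Om AK/\rho$ is not zero-dimensional. This is exactly where Section~\ref{sec:non-profinite-closed} feeds in: $\Om AK/\rho$ is a quotient of the profinite semigroup $\Om AK$ by the closed congruence $\rho$, and by Gehrke's criterion \cite[Theorem~4.3]{Gehrke:2016a} such a quotient is profinite as soon as it is zero-dimensional. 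Since it is not profinite by Corollary~\ref{c:K-closed-cong-2}, it is not zero-dimensional, hence neither is $\Om XU/\theta$, and therefore $\theta$ is a fully invariant closed congruence on $\Om XU$ that is not profinite.

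The main obstacle, conceptually, is the transfer of non-profiniteness between the two categories. One cannot carry it out by an algebraic separation argument, because full invariance is manufactured automatically in Lemma~\ref{l:fi} regardless of $\rho$. Instead the resolution is to replace profiniteness by the purely topological property of zero-dimensionality, so that the homeomorphism $\Om XU/\theta\cong(\Om AK/\rho)^I$ transports the obstruction unchanged; all the genuinely algebraic content — the adjunction, the identification $\Om xK^u\cong(\Om AK)^I$, and the automatic full invariance — has already been established in the preceding results, so the proof reduces to this assembly.
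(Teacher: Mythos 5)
Your proof is correct and follows essentially the same route as the paper: take the closed congruence of Section~\ref{sec:non-profinite-closed} on~\Om AK, transport it via Lemma~\ref{l:fi} to a fully invariant closed congruence on~$\Om xK^u$, and show that the quotient fails to be zero-dimensional. The only (valid) variation is in the last step: the paper observes directly that the quotient contains a closed copy of $[0,1]$ via the Cantor function, whereas you deduce non-zero-dimensionality of $\Om AK/\rho$ from Corollary~\ref{c:K-closed-cong-2} together with Gehrke's criterion and then transfer it through the homeomorphism $\Om XU/\theta\cong(\Om AK/\rho)^I$.
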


\begin{proof}
  Let $A=\{a,b\}$ and let $\theta$ be the equivalence relation on \Om
  AK of Section~\ref{sec:non-profinite-closed}, namely the one that
  identifies two infinite words if and only if they are equal or
  constitute a pair of the form $(wab^\omega,wba^\omega)$, with $w$ a
  finite word; finite words form singleton classes. Note that $\theta$
  is a closed congruence on~\Om AK. Hence, the associated relation
  $\tilde{\theta}$ on~$\Om xK^u$ is a fully invariant closed
  congruence by Lemma~\ref{l:fi}. Note that the infinite words form a
  closed subspace of~\Om AK homeomorphic with the Cantor set (it is
  the perfect kernel of~\Om AK, cf.~\cite[Section~6.B]{Kechris:1995}).
  The relation $\theta$ corresponds to the kernel of the Cantor
  function. Hence, $\Om xK^u/\tilde{\theta}$ is not profinite (it
  contains a closed subspace homeomorphic with the interval $[0,1]$,
  which is not zero-dimensional) thereby showing that $\tilde{\theta}$ is
  not a profinite congruence.
\end{proof}

\section{From profinite unary algebras to profinite monoids}
\label{sec:functor-mu}

In the previous section we constructed, for a given profinite
$A$-algebra $U$, the profinite semigroup $\hat{\Cl S}_U$. Now we
exhibit an alternative construction, with the minor modification that
the resulting structure is a profinite monoid.

Recall that $A$ is a fixed unary signature.
Terms over one single variable $x$ may be written as $w(x)$ with $w\in
A^*$.
Every term $w(x)$ induces, for a given $A$-algebra $U$, the
transformation $\alpha_U(w)\in \Cl S_U$, with one exception, where $w$
is the empty word, the considered term is $x$, and the resulting
transformation is the identity which 
is not
a member of $\Cl S_U$ as
defined in Section~\ref{sec:unary}. For that reason, we add the
identity transformation of~$U$ to $\Cl S_U$, if it is not already
present, and denote the resulting monoid $\Cl M_U$. Moreover, the
considered mapping $\pi : T_A(x) \rightarrow \Cl M_U$ is a monoid
homomorphism if we consider the operation of substitution in $
T_A(x)$; more formally, we definite the operation $\cdot $ on $T_A(x)$
by the rule %
$w(x)\cdot u(x) = wu(x)$.

To imitate the previous construction in the case of profinite
$A$-algebras and avoid working with inverse limits, we consider a
pseudovariety \pv V of $A$-algebras, the free pro-\pv V algebra $\Om
xV$ over the set $\{x\}$, and look at the transformations that are
determined by members of $\Om xV$. We claim that $\Om xV$ is equipped
with the binary operation of substitution generalizing that for
$T_A(x)$. Indeed, by Theorem~\ref{t:evaluation-continuity}, the
evaluation mapping
\begin{align*}
  \Om xV\times\Om xV &\to \Om xV \\
  (u,v) &\mapsto \sigma_v^{\pv V}(u)
\end{align*}
is continuous where, in the notation of Section~\ref{sec:prelims},
$\sigma_v^{\pv V}=\widehat{\varphi_v}$ for the mapping %
$\varphi_v:\{x\}\to\Om xV$ sending $x$ to $v$. %
In case $\pv V=\pv{Un}$, we drop the superscript in the notation
$\sigma_v^{\pv V}$. 

For each pair $u,v\in  \Om xV$, we put $v\cdot u= \sigma_u^{\pv V}(v)$.
In this way we define a continuous multiplication on $\Om xV$. 
We claim that this binary operation is also associative. Indeed, for 
$u,v,w\in \Om xV$ we have 
$$(w\cdot v)\cdot u=\sigma^{\pv V}_{u}(w\cdot v)=
\sigma_u^{\pv V}(\sigma_v^{\pv V} (w)),$$ and 
$$w\cdot (v\cdot u)=\sigma_{v\cdot u}^{\pv V}(w)= 
\sigma^{\pv V}_{\sigma_u^{\pv V} (v)}(w) .$$
Hence, the associativity of the operation $\cdot$ 
is equivalent to stating that
\begin{equation}
  \label{eq:sigma}
  \sigma_{\sigma^{\pv V}_u(v)}^{\pv V}
  =\sigma_{v\cdot u}^{\pv V}
  =\sigma_u^{\pv V}\circ\sigma_v^{\pv V},
\end{equation}
which follows from the commutativity of the following
diagram:
\begin{displaymath}
  \xymatrix{
    \{x\}
    \ar[rr]^\iota
    \ar[rrd]_{\varphi_v}
    \ar@/_1cm/[rrdd]_{\varphi_{v\cdot u}}
    &&
    **[r]{\Om xV}
    \ar[d]^{\sigma_v^{\pv V}}
    \ar@/^1cm/[dd]^{\sigma_{v\cdot u}^{\pv V}}
    \\
    &
    \{x\}
    \ar[r]^\iota
    \ar[rd]_{\varphi_u}
    &
    **[r]{\Om xV}
    \ar[d]^{\sigma_u^{\pv V}}
    \\
    &&
    **[r]{\Om xV .}
  }
\end{displaymath}
Since \Om xV is a topological monoid under the operation $\cdot$, with
neutral element~$x$, and a compact zero-dimensional space, by
\cite{Numakura:1957} it is a profinite monoid.

For a profinite $A$-algebra $U$ we consider the set $\Cl C(U)$ of all
continuous transformations from $U$ to $U$. We endow $\Cl C(U)$ with
the compact-open topology, with subbase of open sets formed by the
subsets of the form
\begin{displaymath}
  [K,O]=\{f\in \Cl C(U) : f(K)\subseteq O\}
\end{displaymath}
where $K$ is compact and $O$ is open. Since the profinite space $U$ is
compact (and Hausdorff), we may choose as subbasic open sets only
those $[K,O]$ where $O$~is clopen: indeed, $O$ is the union of a
family $(O_i)_{i\in I}$ of clopen sets and, from the compactness of
$K$ it follows that
\begin{displaymath}
  [K,O]
  =\bigcup_{F\text{ finite }\subseteq I}[K,\bigcup_{i\in F}O_i].
\end{displaymath}
Note that this smaller subbase of open sets actually consists of clopen
sets:
\begin{displaymath}
  \Cl C(U)\setminus[K,O]
  =\{f\in\Cl U:f(K)\setminus O\ne\emptyset\}
  =\bigcup_{u\in K}[\{u\}, U \setminus O].
\end{displaymath}
It is also a basic observation from general topology that $\Cl C(U)$
is Hausdorff, even under much weaker hypotheses
\cite[Theorem~43.4]{Willard:1970}. Moreover, since $\Cl C(U)$ is
closed under composition, it is a monoid. Notice that composition in
$\Cl C(U)$ is continuous with respect to the compact-open topology ---
more generally, this holds whenever $U$ is locally compact, see
\cite[Theorem 3.4.2]{Engelking:1989}.

Let \pv V be a pseudovariety of $A$-algebras and $U$ a pro-\pv V
algebra. We are now ready to state basic properties of the mapping %
$\tau_U^{\pv V} : \Om xV \rightarrow \Cl C(U)$ defined by the rule
$\tau_U^{\pv V}(w)=w_U$; this is well defined by
Corollary~\ref{c:implops-continuous}.

\begin{Lemma}
  \label{l:closure-of-MU}
  Let \pv V be a pseudovariety of $A$-algebras, $U$ be a pro-\pv V
  algebra, and %
  $\tau_U^{\pv V} : \Om xV \rightarrow \Cl C(U)$ be the mapping
  defined above. Then
  \begin{enumerate}[(i)]
  \item\label{item:closure-of-MU-1} $\tau_U^{\pv V}$ is a continuous
    homomorphism of topological monoids;
  \item\label{item:closure-of-MU-2} $\tau_U^{\pv V}$ is a closed mapping;
  \item\label{item:closure-of-MU-3} the image $\mathrm{Im}(\tau_U^{\pv V})$
    is the closure of $\Cl M_U$ in $\Cl C(U)$.
  \end{enumerate}
\end{Lemma}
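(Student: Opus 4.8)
The plan is to prove the three assertions in sequence, exploiting the structural results already established for the substitution monoid $\Om xV$ and the general facts about the compact-open topology on $\Cl C(U)$.

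For part~(\ref{item:closure-of-MU-1}), I would first check that $\tau_U^{\pv V}$ is a monoid homomorphism. The neutral element $x$ of $\Om xV$ maps to the identity transformation, and the homomorphism property $\tau_U^{\pv V}(v\cdot u)=\tau_U^{\pv V}(v)\circ\tau_U^{\pv V}(u)$ should follow from the associativity identity~\eqref{eq:sigma} together with the definition of $w_U$ via the universal property: evaluating $v\cdot u=\sigma_u^{\pv V}(v)$ on $U$ reduces to composing the operations induced by $v$ and $u$. For continuity, since the subbasic clopen sets $[K,O]$ (with $O$ clopen) generate the compact-open topology, it suffices to show that $(\tau_U^{\pv V})^{-1}([K,O])$ is open in $\Om xV$. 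By definition this preimage is $\{w\in\Om xV : w_U(K)\subseteq O\}$, and I would deduce its openness from the joint continuity of the evaluation map $\varepsilon^U$ of Theorem~\ref{t:evaluation-continuity} (equivalently, from Corollary~\ref{c:implops-continuous} applied uniformly): for each $u\in K$ the set of $w$ with $w_U(u)\in O$ is open, and a compactness argument over $K$ assembles these into the desired open set.

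Parts~(\ref{item:closure-of-MU-2}) and~(\ref{item:closure-of-MU-3}) I would handle together. Since $\Om xV$ is a profinite monoid, hence compact, and $\Cl C(U)$ is Hausdorff (as noted in the text), a continuous map from a compact space to a Hausdorff space is automatically closed; this gives~(\ref{item:closure-of-MU-2}) immediately once~(\ref{item:closure-of-MU-1}) is in hand. In particular $\mathrm{Im}(\tau_U^{\pv V})$ is closed in $\Cl C(U)$. For~(\ref{item:closure-of-MU-3}), I would argue by double inclusion. The image is closed and contains the transformations $w_U$ for all finite words $w\in A^*$, that is, it contains $\Cl M_U$; hence it contains the closure $\overline{\Cl M_U}$. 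For the reverse inclusion, observe that $A^*$ (identified with the finite terms $w(x)$) is dense in $\Om xV$, since $\Om xV$ is the free pro-\pv V algebra generated topologically by the word algebra; applying the continuous map $\tau_U^{\pv V}$ shows $\mathrm{Im}(\tau_U^{\pv V})=\tau_U^{\pv V}(\overline{A^*})\subseteq\overline{\tau_U^{\pv V}(A^*)}=\overline{\Cl M_U}$, using that continuous images of dense sets are dense in the image and that the image is closed.

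The main obstacle I anticipate is the continuity in part~(\ref{item:closure-of-MU-1}): one must pass correctly from the joint continuity of evaluation (Theorem~\ref{t:evaluation-continuity}, with the generating set taken to be $\{x\}$ so that $\Cl C(\{x\},U)=U$) to openness of preimages of the subbasic sets $[K,O]$ in the \emph{compact-open} topology. The subtlety is uniform control over the compact set $K$: knowing that $\{w:w_U(u)\in O\}$ is open for each fixed $u$ does not by itself give openness of $\{w:w_U(K)\subseteq O\}$, so I would exploit compactness of $K$ and of $\Om xV$ to produce, around any $w_0$ with $w_0{}_U(K)\subseteq O$, a neighborhood of $w_0$ whose elements still map $K$ into $O$, via a tube-lemma style argument on the product $\Om xV\times U$ using continuity of $\varepsilon^U$ and clopenness of $O$. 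Everything else reduces to routine verification or to the cited general-topology facts.
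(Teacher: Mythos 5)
Your proposal is correct and follows essentially the same route as the paper: the homomorphism property via the identity~\eqref{eq:sigma} and the naturality of the operations $w_U$ (Corollary~\ref{c:implops}), closedness of $\tau_U^{\pv V}$ from compactness of $\Om xV$ and Hausdorffness of~$\Cl C(U)$, and part~(iii) from density of the term algebra in~$\Om xV$ combined with closedness of the image. The only divergence is that for continuity the paper simply cites \cite[Theorem~3.4.7]{Engelking:1989}, whereas your tube-lemma argument over the compact set $K$ using the joint continuity of the evaluation map (Theorem~\ref{t:evaluation-continuity} with $X=\{x\}$) is exactly the standard proof of the relevant direction of that exponential-law theorem, so the two are the same argument at different levels of detail.
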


\begin{proof}
  We first show that $\tau_U^{\pv V}$ is a monoid homomorphism. So, we
  want to show that $\tau_U^{\pv V}(u\cdot v)$ and $\tau_U^{\pv
    V}(u)\circ \tau_U^{\pv V}(v)$ are the same transformation of~$U$
  for every $u,v\in\Om xV$. We choose an arbitrary $w\in U$ and check
  that %
  $(u\cdot v)_U(w)=u_U\bigl(v_U(w)\bigr)$. Let $\psi$ be the unique
  continuous homomorphism $\Om xV\to U$ that maps $x$ to~$w$. Consider
  the following diagram:
  \begin{displaymath}
    \xymatrix{
      \Om xV
      \ar[d]^\psi
      \ar[r]_{v_{\Om xV}}
      \ar@/^5mm/[rr]^{(u\cdot v)_{\Om xV}}
      &
      \Om xV
      \ar[d]^\psi
      \ar[r]_{u_{\Om xV}}
      &
      \Om xV
      \ar[d]^\psi
      \\
      U
      \ar[r]^{v_U}
      \ar@/_5mm/[rr]_{(u.v)_U}
      &
      U
      \ar[r]^{u_U}
      &
      **[r]{U.}
    }
  \end{displaymath}
  By Corollary~\ref{c:implops}, the two inner squares and the outer
  square commute while the upper triangle commutes
  by~\eqref{eq:sigma}. Hence, we have
  \begin{align*}
    (u\cdot v)_U(w)
    &=(u\cdot v)_U(\psi(x))
      =\psi\bigl((u\cdot v)_{\Om xV}(x)\bigr)
      =\psi\bigl(u_{\Om xV}(v_{\Om xV}(x))\bigr)
    \\
    &=u_U\bigl(\psi(v_{\Om xV}(x))\bigr)
      =u_U\bigl(v_U(\psi(x))\bigr)
      =u_U\bigl(v_U(w)\bigr).
  \end{align*}
  Continuity of $\tau_U^{\pv V}$ follows from
  \cite[Theorem~3.4.7]{Engelking:1989}. This
  proves~(\ref{item:closure-of-MU-1}).
  
  Since $\tau_U^{\pv V}$ is a continuous mapping from the compact
  space $\Om xV$ to the Hausdorff space $\Cl C(U)$, it follows that
  $\tau_U^{\pv V}$ is a closed mapping, which
  gives~(\ref{item:closure-of-MU-2}). From that we also get that
  $\mathrm{Im}(\tau_U^{\pv V})$ is a closed and compact subspace of
  $\Cl C(U)$. We may see $T_{A}(x)$ as a dense subset of $\Om xV$.
  Hence, $\Cl M_U\subseteq \mathrm{Im}(\tau_U^{\pv V})$ and every
  element of $\mathrm{Im}(\tau_U^{\pv V})$ is a limit of some net
  consisting of elements of~$\Cl M_U$.
\end{proof}

We denote the topological monoid $\mathrm{Im}(\tau_U^{\pv V})$ by
$\mu_{\pv V}(U)$. Notice that $\mu_{\pv{Un}}(U)=\Cl M_U$ whenever $U$
is a finite $A$-algebra, because $\Cl C(U)$ is a discrete finite space
in this case.

If $\varphi : U \rightarrow V$ is a continuous homomorphism between
pro-\pv V algebras and $w\in\Om xV$ is an arbitrary element, then the
following diagram commutes by Corollary~\ref{c:implops}:
\begin{displaymath}
  \xymatrix{
    U  \ar[r]^{w_U}
    \ar[d]^{\varphi}
    & U \ar[d]^{\varphi} \\
    V  \ar[r]^{w_V}
    & **[r]{V.} }
\end{displaymath}
Now if, in addition, the mapping $\varphi : U \rightarrow V$ is
surjective, then we may define %
$\reallywidecheck\varphi: \mu_{\pv V}(U) \rightarrow \mu_{\pv V}(V)$
by $\reallywidecheck\varphi (w_U)=w_V$. To see that this definition is
correct, let $w_U=w'_U$ for a pair of elements $w,w'\in \Om xV$. Then
for every $v\in V$ we may choose $u\in U$ such that $\varphi(u)=v$.
Now we have
$w_V(v)=w_V\bigl(\varphi(u)\bigr)=\varphi\bigl(w_U(u)\bigr)$ by the
commutativity of the above diagram. Since $w_U=w'_U$, we get
$\varphi\bigl(w_U(u)\bigr)=\varphi\bigl(w'_U(u)\bigr)=
w'_V\bigl(\varphi(u)\bigr)=w'_V(v)$. Altogether we deduce that
$w_V=w'_V$. Note that $\reallywidecheck\varphi$ is the unique mapping
such that the following diagram commutes:
\begin{displaymath}
  \xymatrix@C=1mm{
    &
    \Om xV
    \ar[ld]_{\tau_U^{\pv V}}
    \ar[rd]^{\tau_V^{\pv V}}
    &
    \\
    \mu_{\pv V}(U)
    \ar[rr]^{\reallywidecheck\varphi}
    &&
    \,\mu_{\pv V}(V).
  }
\end{displaymath}

\begin{Lemma}
  \label{l:onto-homomorphisms}
  If $\varphi : U \rightarrow V$ is an onto continuous homomorphism
  between pro-\pv V algebras, then %
  $\reallywidecheck\varphi: \mu_{\pv V}(U) \rightarrow \mu_{\pv V}(V)$
  is an onto continuous monoid homomorphism.
\end{Lemma}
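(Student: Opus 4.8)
The plan is to read everything off the commutative triangle
$$\reallywidecheck\varphi\circ\tau_U^{\pv V}=\tau_V^{\pv V}$$
that was used to define $\reallywidecheck\varphi$, together with the fact, established in Lemma~\ref{l:closure-of-MU}, that $\tau_U^{\pv V}$ and $\tau_V^{\pv V}$ are continuous monoid homomorphisms with images $\mu_{\pv V}(U)$ and $\mu_{\pv V}(V)$, respectively. The algebraic assertions will then be essentially formal, and the only genuinely topological point will be continuity.

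Surjectivity I would obtain at once: since $\mu_{\pv V}(V)=\mathrm{Im}(\tau_V^{\pv V})$, every element of $\mu_{\pv V}(V)$ has the form $w_V=\tau_V^{\pv V}(w)=\reallywidecheck\varphi\bigl(\tau_U^{\pv V}(w)\bigr)$ for some $w\in\Om xV$, so it lies in the image of $\reallywidecheck\varphi$. For the monoid homomorphism property I would argue on elements, each of which has the form $w_U$. Recalling from the proof of Lemma~\ref{l:closure-of-MU}(\ref{item:closure-of-MU-1}) that $(u\cdot v)_U=u_U\circ v_U$, the product of $w_U$ and $w'_U$ in $\mu_{\pv V}(U)$ is $(w\cdot w')_U$, and the definition of $\reallywidecheck\varphi$ together with the same identity in $\mu_{\pv V}(V)$ gives
$$\reallywidecheck\varphi(w_U\circ w'_U)=\reallywidecheck\varphi\bigl((w\cdot w')_U\bigr)=(w\cdot w')_V=w_V\circ w'_V=\reallywidecheck\varphi(w_U)\circ\reallywidecheck\varphi(w'_U).$$
Since the neutral element of $\mu_{\pv V}(U)$ is $x_U=\mathrm{id}_U$ and $\reallywidecheck\varphi(x_U)=x_V=\mathrm{id}_V$, the map also preserves identities.

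The step I expect to require the most care is continuity, and the idea I would use is to recognize $\tau_U^{\pv V}$ as a topological quotient map. Indeed, $\tau_U^{\pv V}$ is a continuous surjection from the compact space $\Om xV$ onto $\mu_{\pv V}(U)$, the latter being Hausdorff as a subspace of $\Cl C(U)$; a continuous map from a compact space to a Hausdorff space is closed, and a surjective continuous closed map is a quotient map. Hence, by the universal property of quotient maps, a map out of $\mu_{\pv V}(U)$ is continuous as soon as its composite with $\tau_U^{\pv V}$ is continuous.

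Applying this criterion to $\reallywidecheck\varphi$, whose composite with $\tau_U^{\pv V}$ is exactly the continuous map $\tau_V^{\pv V}$, I would conclude that $\reallywidecheck\varphi$ is continuous, which together with the previous paragraph yields the claim. The only potential pitfalls are bookkeeping ones — making sure the corestriction of $\tau_U^{\pv V}$ to its image genuinely carries the subspace topology and that $\mu_{\pv V}(U)$ is Hausdorff — both of which are immediate from the constructions in this section.
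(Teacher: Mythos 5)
Your proof is correct and follows essentially the same route as the paper, which simply observes that $\reallywidecheck\varphi$ is clearly onto and is a continuous monoid homomorphism because $\tau_U^{\pv V}$ and $\tau_V^{\pv V}$ are, by Lemma~\ref{l:closure-of-MU}. You have merely written out the details the paper leaves implicit, in particular the quotient-map argument for continuity, which correctly uses that $\tau_U^{\pv V}$ is a closed continuous surjection onto $\mu_{\pv V}(U)$.
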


\begin{proof}
  Clearly, the mapping $\reallywidecheck\varphi$ is onto and it is a
  continuous monoid homomorphism because so are $\tau_U^{\pv V}$ and
  $\tau_V^{\pv V}$ by Lemma~\ref{l:closure-of-MU}.
\end{proof}

The expected statement follows.

\begin{Prop}
  \label{p:profinite-monoid-mu}
  If $U$ is a pro-$\pv V$ $A$-algebra, then $\mu_{\pv V}(U)$ is a profinite
  monoid.
\end{Prop}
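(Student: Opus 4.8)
The plan is to show that $\mu_{\pv V}(U)$ is a compact, zero-dimensional topological monoid and then invoke Numakura's theorem \cite{Numakura:1957}, exactly as was done above to conclude that $\Om xV$ is a profinite monoid under substitution. We already know from Lemma~\ref{l:closure-of-MU}(\ref{item:closure-of-MU-1}) that $\mu_{\pv V}(U) = \mathrm{Im}(\tau_U^{\pv V})$ is a topological monoid, and from Lemma~\ref{l:closure-of-MU}(\ref{item:closure-of-MU-2})--(\ref{item:closure-of-MU-3}) that it is a closed, hence compact, subspace of $\Cl C(U)$, being the continuous image of the compact space $\Om xV$. So the only genuinely new content is zero-dimensionality, and for this the natural route is to produce a separating family of continuous homomorphisms from $\mu_{\pv V}(U)$ onto finite monoids.

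First I would write $U = \varprojlim U_i$ as an inverse limit of finite $A$-algebras with onto connecting homomorphisms $\varphi_{ij} : U_i \to U_j$, and denote by $\varphi_i : U \to U_i$ the projections. Applying the construction $\mu_{\pv V}$ together with Lemma~\ref{l:onto-homomorphisms} to each $\varphi_i$ yields onto continuous monoid homomorphisms $\reallywidecheck{\varphi_i} : \mu_{\pv V}(U) \to \mu_{\pv V}(U_i)$. Since each $U_i$ is finite, $\mu_{\pv{Un}}(U_i) = \Cl M_{U_i}$ is a finite monoid, and (noting that the operations are interpreted the same way regardless of whether we view $U_i$ as a pro-$\pv V$ or pro-$\pv{Un}$ algebra) $\mu_{\pv V}(U_i)$ is a finite monoid as well. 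Thus we have a family of continuous homomorphisms from $\mu_{\pv V}(U)$ onto finite monoids, and it remains to check that this family separates points.

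For the separation, suppose $w_U \ne w'_U$ in $\mu_{\pv V}(U)$ for $w, w' \in \Om xV$. Then there is some $u \in U$ with $w_U(u) \ne w'_U(u)$, and since these two points of $U$ are distinct and $U$ is an inverse limit, there is an index $i$ with $\varphi_i(w_U(u)) \ne \varphi_i(w'_U(u))$. By the commuting square defining $\reallywidecheck{\varphi_i}$ (equivalently, by Corollary~\ref{c:implops}), $\varphi_i(w_U(u)) = w_{U_i}(\varphi_i(u))$ and likewise for $w'$, so $w_{U_i} \ne w'_{U_i}$ as transformations of $U_i$; that is, $\reallywidecheck{\varphi_i}(w_U) \ne \reallywidecheck{\varphi_i}(w'_U)$. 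Hence the $\reallywidecheck{\varphi_i}$ separate the points of $\mu_{\pv V}(U)$, which is therefore residually finite, in particular zero-dimensional. Combined with compactness and the topological-monoid structure, Numakura's theorem gives that $\mu_{\pv V}(U)$ is profinite.

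The main obstacle I anticipate is not the separation argument itself but making sure the reduction to the finite algebras $U_i$ is clean. Specifically, one must confirm that $\mu_{\pv V}(U_i)$ is genuinely finite and that the homomorphisms $\reallywidecheck{\varphi_i}$ land in these finite monoids in a way compatible with the implicit-operation interpretation; this hinges on the remark that $\mu_{\pv{Un}}(U_i) = \Cl M_U$ for finite $U_i$ together with the naturality encoded in Corollary~\ref{c:implops}. An equally acceptable alternative, avoiding any explicit inverse limit, would be to argue zero-dimensionality directly: since $\mu_{\pv V}(U)$ is a compact subspace of the Hausdorff space $\Cl C(U)$ whose topology has a clopen subbase $\{[\{u\}, O] : u \in U,\ O \text{ clopen}\}$ (as established just before Lemma~\ref{l:closure-of-MU}), the subspace $\mu_{\pv V}(U)$ inherits a clopen subbase and is therefore zero-dimensional; one then appeals to Numakura's theorem as before. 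I would present the residual-finiteness argument as the primary line since it most directly matches the definition of profinite algebra used in the paper.
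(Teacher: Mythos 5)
Your proposal is correct, but your primary line of argument is not the one the paper uses; it is in fact your ``equally acceptable alternative'' that coincides with the paper's proof. The paper argues in two lines: by Lemma~\ref{l:closure-of-MU}, $\mu_{\pv V}(U)$ is a compact submonoid of $\Cl C(U)$; since $\Cl C(U)$ is zero-dimensional (the clopen subbase $[K,O]$ with $O$ clopen was exhibited just before that lemma), $\mu_{\pv V}(U)$ is a compact zero-dimensional topological monoid, and Numakura's theorem finishes the job. Your primary argument instead decomposes $U=\varprojlim U_i$ and separates points of $\mu_{\pv V}(U)$ by the finite monoids $\mu(U_i)=\Cl M_{U_i}$ via the $\reallywidecheck{\varphi_i}$; this is essentially the paper's later proof of Proposition~\ref{p:alternative-construction} (that $\mu(U)\cong\varprojlim\mu(U_i)$), so you are proving a strictly stronger statement and specializing it. What your route buys is an explicit residual-finiteness witness (making Numakura redundant: compactness plus separation by continuous homomorphisms onto finite monoids is already the paper's definition of profinite); what it costs is some extra bookkeeping that the paper's route avoids, namely arranging the finite quotients $U_i$ to lie in $\pv V$ (or invoking the $\pv V$-independence of $\mu_{\pv V}$, which the paper only records afterwards in Proposition~\ref{p:V-independence}) so that Lemma~\ref{l:onto-homomorphisms} applies. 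You do flag this compatibility issue, so the argument goes through; it is just heavier machinery than needed for this particular statement.
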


\begin{proof}
  In view of Lemma~\ref{l:closure-of-MU}, $\mu_{\pv V}(U)$ is a compact
  submonoid of the topological monoid $\Cl C(U)$. Since $\Cl C(U)$ is
  zero-dimensional, $\mu_{\pv V}(U)$ is a topological monoid on a profinite
  space. By~\cite{Numakura:1957}, it follows that $\mu_{\pv V}(U)$ is a
  profinite monoid.
\end{proof}

The following is a further application of Lemma~\ref{l:closure-of-MU}.

\begin{Prop}
  \label{p:monoid-profiniteness-for-free-A-algebra}
  For a pseudovariety \pv V of $A$-algebras, the profinite monoids
  $(\Om xV,\cdot)$ and $(\mu_{\pv V} (\Om XV),\circ)$ are isomorphic.
\end{Prop}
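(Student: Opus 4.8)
The plan is to identify the canonical continuous monoid homomorphism $\tau_{\Om xV}^{\pv V}$ of Lemma~\ref{l:closure-of-MU} with the left regular representation of $(\Om xV,\cdot)$ and to check that it is injective. Since the set of generators is the singleton $X=\{x\}$, the symbols $\Om XV$ and $\Om xV$ denote one and the same profinite $A$-algebra; the wording of the statement merely stresses the two roles it plays, namely as the monoid of implicit unary operations (the first factor, with the substitution product $\cdot$) and as the carrier $A$-algebra on which these operations act (the argument of $\mu_{\pv V}$). Accordingly, $\mu_{\pv V}(\Om XV)=\mathrm{Im}(\tau_{\Om xV}^{\pv V})$, where $\tau_{\Om xV}^{\pv V}:\Om xV\to\Cl C(\Om xV)$ sends $w$ to $w_{\Om xV}$, and by Lemma~\ref{l:closure-of-MU}(\ref{item:closure-of-MU-1}) this map is already a continuous homomorphism of $(\Om xV,\cdot)$ onto $(\mu_{\pv V}(\Om XV),\circ)$.

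Next I would compute $w_{\Om xV}$ explicitly. Because $X=\{x\}$, a continuous function $X\to\Om xV$ is determined by its value $v$ at $x$, so $\Cl C(X,\Om xV)$ is identified with $\Om xV$; under this identification, $w_{\Om xV}(v)=\hat{\varphi_v}(w)=\sigma_v^{\pv V}(w)=w\cdot v$ by the very definition of the product $\cdot$. Hence $\tau_{\Om xV}^{\pv V}(w)$ is left translation by $w$, and $\tau_{\Om xV}^{\pv V}$ is the left regular representation of the monoid $(\Om xV,\cdot)$. Injectivity is then immediate: since $x$ is the neutral element of $(\Om xV,\cdot)$, evaluation at $x$ recovers $w$, as $\tau_{\Om xV}^{\pv V}(w)(x)=w\cdot x=w$; thus $\tau_{\Om xV}^{\pv V}(w)=\tau_{\Om xV}^{\pv V}(w')$ forces $w=w'$.

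Finally, a continuous bijective homomorphism from the compact monoid $\Om xV$ onto the submonoid $\mu_{\pv V}(\Om XV)$ of the Hausdorff monoid $\Cl C(\Om xV)$ is automatically a homeomorphism --- indeed $\tau_{\Om xV}^{\pv V}$ is a closed map by Lemma~\ref{l:closure-of-MU}(\ref{item:closure-of-MU-2}), so the induced bijection onto its image is a homeomorphism --- and therefore an isomorphism of topological monoids, which is the desired conclusion. I expect the only delicate point to be the bookkeeping in the previous paragraph: one must carefully reconcile the substitution convention defining $\cdot$ (where $v\cdot u=\sigma_u^{\pv V}(v)$) with the evaluation convention defining $\tau_{\Om xV}^{\pv V}$ (where $w_{\Om xV}(v)=\sigma_v^{\pv V}(w)$), so as to be sure that $w_{\Om xV}$ is left, rather than right, translation by $w$. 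Once the identity $w_{\Om xV}(v)=w\cdot v$ is confirmed, no genuine obstacle remains.
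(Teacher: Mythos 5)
Your proposal is correct and takes essentially the same route as the paper: both rely on Lemma~\ref{l:closure-of-MU} for the onto continuous closed monoid homomorphism $\tau^{\pv V}_{\Om xV}$ and then prove injectivity by evaluating at~$x$, your computation $\tau^{\pv V}_{\Om xV}(w)(x)=w\cdot x=w$ being exactly the paper's $w_U(x)=\sigma_x^{\pv V}(w)=w$. Your identification of $\tau^{\pv V}_{\Om xV}$ with the left regular representation of $(\Om xV,\cdot)$ is a pleasant conceptual gloss (echoing the Cayley-type representations of Section~\ref{sec:unary}) but does not change the argument.
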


\begin{proof}
  We already know that $(\Om xV,\cdot)$ is 
  a profinite monoid and that %
  $\tau_U^{\pv V} : \Om xV \rightarrow \mu_{\pv V} (\Om xV)$ is an onto
  continuous monoid homomorphism by Lemma~\ref{l:closure-of-MU}. We
  also know that $\tau_U^{\pv V}$ is a closed mapping. If we show that
  $\tau_U^{\pv V}$ is injective, then it is a bijective continuous
  closed mapping, and therefore it is a homeomorphism. And as a
  bijective monoid homomorphism it is also a monoid isomorphism. Thus,
  the whole statement is proved if we show that $\tau_U^{\pv V}$ is
  injective.
  
  For a pair of distinct elements $w,w'\in \Om xV$, we want to show
  that the transformations $w_U,w'_U\in\mu_{\pv V}(U)$ are distinct.
  Since $U=\Om xV$ is freely generated by $x$, we must
  show that
  $w_U(x)\ne w'_U(x)$. However, we have $w_U(x)=\sigma_x^{\pv
  V}(w)=w$.
  Hence, we get $w'_U(x)=w'\ne w=w_U(x)$.
\end{proof}

To summarize, for any pro-\pv V algebra $U$, we construct the
profinite monoid $\mu_{\pv V}(U)$. Moreover, in view of
Lemma~\ref{l:onto-homomorphisms}, this construction respects onto
homomorphisms. So, it is natural to ask whether the correspondence
$\mu_{\pv V}$ works as a functor from the point of view of category
theory. We specify appropriate categories later, now we just observe,
that $\mu_{\pv V}$ respects composition of surjective homomorphisms.

\begin{Lemma}
 \label{l:composition-vs-hat}
 Let $\varphi :U \rightarrow V$ and $\psi : V \rightarrow W$ are onto
 continuous homomorphisms between pro-\pv V algebras. Then
 the equality
 $\reallywidecheck{\psi\circ\varphi}=\reallywidecheck\psi\circ
 \reallywidecheck\varphi$ holds.
\end{Lemma}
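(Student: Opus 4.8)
The plan is to exploit the defining property of the assignment $\varphi\mapsto\reallywidecheck\varphi$, namely that for an onto continuous homomorphism $\varphi\colon U\to V$ the map $\reallywidecheck\varphi$ is the unique mapping making the triangle
$$\reallywidecheck\varphi\circ\tau_U^{\pv V}=\tau_V^{\pv V}$$
commute, as recorded just before the statement. Since $\mu_{\pv V}(U)=\mathrm{Im}(\tau_U^{\pv V})$, the homomorphism $\tau_U^{\pv V}\colon\Om xV\to\mu_{\pv V}(U)$ is onto; consequently any mapping out of $\mu_{\pv V}(U)$ is completely determined by its composite with $\tau_U^{\pv V}$, which is exactly what makes the above commutation pin down $\reallywidecheck\varphi$ uniquely. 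Therefore, to establish $\reallywidecheck{\psi\circ\varphi}=\reallywidecheck\psi\circ\reallywidecheck\varphi$ it suffices to check that both sides, precomposed with $\tau_U^{\pv V}$, yield the same mapping $\Om xV\to\mu_{\pv V}(W)$.

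First I would observe that $\psi\circ\varphi\colon U\to W$ is again an onto continuous homomorphism between pro-\pv V algebras, so that $\reallywidecheck{\psi\circ\varphi}$ is legitimately defined and satisfies $\reallywidecheck{\psi\circ\varphi}\circ\tau_U^{\pv V}=\tau_W^{\pv V}$. Then, using the defining triangles for $\varphi$ and for $\psi$ in turn, I would compute
$$(\reallywidecheck\psi\circ\reallywidecheck\varphi)\circ\tau_U^{\pv V}
=\reallywidecheck\psi\circ(\reallywidecheck\varphi\circ\tau_U^{\pv V})
=\reallywidecheck\psi\circ\tau_V^{\pv V}
=\tau_W^{\pv V}.$$
Thus both $\reallywidecheck\psi\circ\reallywidecheck\varphi$ and $\reallywidecheck{\psi\circ\varphi}$ make the triangle for $\psi\circ\varphi$ commute, and the uniqueness noted above delivers the desired equality.

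Equivalently, and perhaps more transparently, one may argue pointwise: every element of $\mu_{\pv V}(U)$ has the form $w_U$ for some $w\in\Om xV$, and unwinding the definitions gives $\reallywidecheck\varphi(w_U)=w_V$, $\reallywidecheck\psi(w_V)=w_W$, and $\reallywidecheck{\psi\circ\varphi}(w_U)=w_W$, whence $(\reallywidecheck\psi\circ\reallywidecheck\varphi)(w_U)=w_W=\reallywidecheck{\psi\circ\varphi}(w_U)$. Either route is a routine diagram chase, so I do not anticipate any genuine obstacle. The only points requiring a moment's care are purely bookkeeping: confirming that $\psi\circ\varphi$ is onto and continuous so that $\reallywidecheck{\psi\circ\varphi}$ is defined, and recalling that the assignment $w_U\mapsto w_W$ is well defined, which is precisely the verification carried out when the operator $\reallywidecheck{\phantom{\varphi}}$ was introduced.
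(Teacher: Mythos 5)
Your proposal is correct and, in its second ``pointwise'' formulation, is essentially identical to the paper's own proof, which simply computes $(\reallywidecheck\psi\circ\reallywidecheck\varphi)(w_U)=\reallywidecheck\psi(w_V)=w_W=\reallywidecheck{\psi\circ\varphi}(w_U)$ for arbitrary $w\in\Om xV$. The first formulation via uniqueness of the map making the triangle with $\tau_U^{\pv V}$ commute is just a repackaging of the same observation and is equally fine.
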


\begin{proof}
  Let $w\in\Om xV$ be an arbitrary element. Then we may see that
  \begin{displaymath}
    (\reallywidecheck\psi\circ \reallywidecheck\varphi) (w_U)
    =\reallywidecheck\psi \bigl(\reallywidecheck\varphi(w_U)\bigr)
    =\reallywidecheck\psi (w_V)
    =w_W .
  \end{displaymath}
  For composite continuous homomorphism %
  $\psi\circ \varphi : U \rightarrow W$, we also have
  $\reallywidecheck{\psi\circ\varphi}(w_U)=w_W$.
\end{proof}

Although, for a pro-\pv V algebra~$U$, we have defined $\mu_{\pv
  V}(U)$ using the pseudovariety \pv V, it turns out that this
profinite monoid depends only on~$U$.

\begin{Prop}
  \label{p:V-independence}
  Let $U$ be a pro-\pv V algebra. Then the equality %
  $\mu_{\pv V}(U)=\mu_{\pv{Un}(U)}$ holds.
\end{Prop}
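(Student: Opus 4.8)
The statement should read $\mu_{\pv V}(U)=\mu_{\pv{Un}}(U)$, an equality of submonoids of $\Cl C(U)$. The plan is to exhibit a single surjection onto $\Om xV$ that identifies the two homomorphisms $\tau_U^{\pv{Un}}$ and $\tau_U^{\pv V}$, and then read off the equality of their images. First I would note that, since $\pv{Un}$ consists of all finite $A$-algebras, we have $\pv V\subseteq\pv{Un}$; hence $U$ is also a pro-$\pv{Un}$ algebra and $\tau_U^{\pv{Un}}:\Om x{Un}\to\Cl C(U)$ is defined, landing in the same monoid $\Cl C(U)$ as $\tau_U^{\pv V}$. The universal property of $\Om x{Un}$ then yields a canonical continuous homomorphism $\pi_{\pv V}:\Om x{Un}\to\Om xV$ with $\pi_{\pv V}\circ\iota_{\pv{Un}}=\iota_{\pv V}$; since its image is a compact, hence closed, subalgebra of $\Om xV$ containing the (topological) generator $x$, the map $\pi_{\pv V}$ is onto.

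The heart of the argument is to prove the factorization $\tau_U^{\pv{Un}}=\tau_U^{\pv V}\circ\pi_{\pv V}$, which amounts to the pointwise identity $w_U=\bigl(\pi_{\pv V}(w)\bigr)_U$ for every $w\in\Om x{Un}$. I would fix $u\in U$, let $\varphi:\{x\}\to U$ send $x$ to $u$, and compare the two continuous extensions $\hat\varphi:\Om x{Un}\to U$ and $\hat\varphi':\Om xV\to U$ guaranteed by $U$ being pro-$\pv{Un}$ and pro-$\pv V$, respectively. Since $\hat\varphi'\circ\pi_{\pv V}$ is a continuous homomorphism out of $\Om x{Un}$ extending $\varphi$, the uniqueness clause in the universal property forces $\hat\varphi=\hat\varphi'\circ\pi_{\pv V}$, so that
$$
w_U(u)=\hat\varphi(w)=\hat\varphi'\bigl(\pi_{\pv V}(w)\bigr)=\bigl(\pi_{\pv V}(w)\bigr)_U(u).
$$
As $u$ is arbitrary, this gives the claimed identity of transformations, and hence the factorization.

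Granting the factorization, surjectivity of $\pi_{\pv V}$ finishes the proof at once:
$$
\mu_{\pv{Un}}(U)=\mathrm{Im}(\tau_U^{\pv{Un}})=\mathrm{Im}(\tau_U^{\pv V}\circ\pi_{\pv V})=\mathrm{Im}(\tau_U^{\pv V})=\mu_{\pv V}(U).
$$
I expect the only real obstacle to be the factorization step, and within it the careful bookkeeping of which universal property is being invoked: the equality $\hat\varphi=\hat\varphi'\circ\pi_{\pv V}$ rests on the uniqueness of maps out of $\Om x{Un}$ into the pro-$\pv{Un}$ algebra $U$, together with $\pi_{\pv V}\circ\iota_{\pv{Un}}=\iota_{\pv V}$. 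Everything else --- the surjectivity of $\pi_{\pv V}$ and the passage from a pointwise identity of transformations to an equality of images --- is routine.
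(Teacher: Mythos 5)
Your proof is correct and follows essentially the same route as the paper: the paper cites the commutativity of its evaluation diagram (established in the proof of Theorem~\ref{t:evaluation-continuity} by exactly the uniqueness argument you give) to obtain the identity $w_U=\bigl(\pi_{\pv V}(w)\bigr)_U$, and then concludes the equality of images. Your version is slightly more careful in making explicit the surjectivity of $\pi_{\pv V}$, which is needed for the inclusion $\mu_{\pv V}(U)\subseteq\mu_{\pv{Un}}(U)$ and which the paper leaves implicit.
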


\begin{proof}
  Consider the following special case of
  Diagram~\eqref{eq:evaluation-diagram}:
  \begin{displaymath}
    \xymatrix{
      \Om x{Un}\times U
      \ar[r]^(.65){\varepsilon_{\pv{Un}}^U}
      \ar[d]^{\pi_{\pv V}\times\mathrm{id}_U}
      &
      U
      \ar[d]^{\mathrm{id}_U}
      \\
      \Om xV\times U
      \ar[r]^(.65){\varepsilon_{\pv V}^U}
      &
      U
    }
  \end{displaymath}
  Since the diagram commutes, as was established in the proof of
  Theorem~\ref{t:evaluation-continuity}, we obtain the following chain
  of equalities for $w\in\Om x{Un}$ and $u\in U$:
  \begin{displaymath}
    w_U(u)
    =\varepsilon_{\pv{Un}}^U(w,u)
    =\varepsilon_{\pv V}^U\bigl(\pi_{\pv V}(w),u\bigr)
    =\bigl(\pi_{\pv V}(w)\bigr)_U(u).
  \end{displaymath}
  This shows that $\mu_{\pv{Un}}(U)=\mu_{\pv V}(U)$.
\end{proof}

In view of Proposition~\ref{p:V-independence}, from hereon we drop the
subscript \pv V in the notation $\mu_{\pv V}$.

To conclude this section, we compare the construction of the profinite
monoid $\mu (U)$ with the construction of the profinite monoid given
via inverse limits in Section~\ref{sec:unary}. Let $U=\varprojlim U_i$
be an inverse limit of finite $A$-algebras with onto connecting
homomorphisms $\varphi_{ij}:U_i\to U_j$ ($i\ge j$). There is an
associated inverse system of $A$-generated finite monoids $\Cl
M_{U_i}=\mu (U_i)$ with connecting homomorphisms
$\reallywidecheck\varphi_{ij}:\Cl M_{U_i}\to\Cl M_{U_j}$ ($i\ge j$).
Since, for every $i$, we have an onto continuous homomorphism %
$\varphi_i : U \rightarrow U_i$, we also have the corresponding
continuous homomorphism of monoids %
$\reallywidecheck{\varphi_i} :\mu(U) \rightarrow \mu(U_i)$. By
Lemma~\ref{l:composition-vs-hat}, all homomorphisms between $\mu (U)$
and the $\mu (U_i)$'s compose in the right way. To see that $\mu (U)$
is the inverse limit $\varprojlim \mu (U_i)$, we only need to show
that, for every two distinct elements $w_U,w'_U\in \mu (U)$, there is
an index $i$ such that %
$\reallywidecheck{\varphi_i} (w_U) %
\not=\reallywidecheck{\varphi_i} (w'_U)$. %
We show that in the next result.

\begin{Prop}
  \label{p:alternative-construction}
  Let $U$ be a profinite $A$-algebra which is an inverse limit of
  finite $A$-algebras $(U_i)_{i}$ as above. Then $\mu (U)$ is an
  inverse limit of the corresponding system of finite monoids
  $\bigl(\mu(U_i)\bigr)_{i}$.
\end{Prop}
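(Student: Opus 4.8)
The plan is to prove that the canonical comparison map into the inverse limit is a continuous isomorphism of profinite monoids. I would set
\[
  \Phi\colon\mu(U)\to\varprojlim\mu(U_i),\qquad
  \Phi(s)=\bigl(\reallywidecheck{\varphi_i}(s)\bigr)_i .
\]
This lands in the inverse limit because, by Lemma~\ref{l:composition-vs-hat} applied to $\varphi_j=\varphi_{ij}\circ\varphi_i$, the components satisfy $\reallywidecheck{\varphi_{ij}}\bigl(\reallywidecheck{\varphi_i}(s)\bigr)=\reallywidecheck{\varphi_j}(s)$ for $i\ge j$; and it is a continuous monoid homomorphism since each $\reallywidecheck{\varphi_i}$ is one by Lemma~\ref{l:onto-homomorphisms}. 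As $\mu(U)$ is profinite by Proposition~\ref{p:profinite-monoid-mu} and $\varprojlim\mu(U_i)$ is Hausdorff, it then suffices to show that $\Phi$ is a bijection: a continuous bijection of compact Hausdorff spaces is a homeomorphism, and a bijective monoid homomorphism is a monoid isomorphism.

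The key step, isolated already in the remarks preceding the statement, is the injectivity of $\Phi$, namely the separation property that distinct elements $w_U,w'_U\in\mu(U)$ are distinguished by some $\reallywidecheck{\varphi_i}$. Recalling that $\reallywidecheck{\varphi_i}(w_U)=w_{U_i}$, I would suppose $w_U\ne w'_U$ and choose $u\in U$ with $w_U(u)\ne w'_U(u)$. Since $U=\varprojlim U_i$ is residually finite, its points are separated by the projections, so there is an index $i$ with $\varphi_i\bigl(w_U(u)\bigr)\ne\varphi_i\bigl(w'_U(u)\bigr)$. Applying the commutative square of Corollary~\ref{c:implops} to the homomorphism $\varphi_i\colon U\to U_i$ gives $\varphi_i\bigl(w_U(u)\bigr)=w_{U_i}\bigl(\varphi_i(u)\bigr)$ and likewise for $w'$; hence $w_{U_i}\bigl(\varphi_i(u)\bigr)\ne w'_{U_i}\bigl(\varphi_i(u)\bigr)$, so $w_{U_i}\ne w'_{U_i}$ as transformations of $U_i$ and therefore $\reallywidecheck{\varphi_i}(w_U)\ne\reallywidecheck{\varphi_i}(w'_U)$.

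For surjectivity I would use a routine compactness argument. Given $(s_i)_i\in\varprojlim\mu(U_i)$, each fibre $\reallywidecheck{\varphi_i}^{-1}(s_i)$ is a nonempty closed subset of $\mu(U)$, nonempty because $\reallywidecheck{\varphi_i}$ is onto. The relations $\reallywidecheck{\varphi_{ij}}(s_i)=s_j$ together with $\reallywidecheck{\varphi_j}=\reallywidecheck{\varphi_{ij}}\circ\reallywidecheck{\varphi_i}$ give $\reallywidecheck{\varphi_i}^{-1}(s_i)\subseteq\reallywidecheck{\varphi_j}^{-1}(s_j)$ whenever $i\ge j$, so this family is directed downward and has the finite intersection property. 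By compactness of $\mu(U)$, the intersection $\bigcap_i\reallywidecheck{\varphi_i}^{-1}(s_i)$ is nonempty, and any $s$ in it satisfies $\Phi(s)=(s_i)_i$.

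I expect the injectivity step to be the conceptual crux, even though it is short: it is precisely where the transformation-monoid structure of $\mu(U)$ meets the residual finiteness of $U$, mediated by Corollary~\ref{c:implops}. Once separation is in hand, surjectivity and the passage to a topological isomorphism are formal consequences of compactness and the lemmas already established, so no delicate estimates remain.
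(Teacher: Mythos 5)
Your proposal is correct and follows essentially the same route as the paper: the crux in both is the separation property that distinct $w_U,w'_U$ are distinguished by some $\reallywidecheck{\varphi_i}$, proved by picking $u$ with $w_U(u)\ne w'_U(u)$, separating the images in some $U_i$, and transferring via the commutative square of Corollary~\ref{c:implops}. The paper delegates the remaining formalities (the comparison map being a compatible continuous homomorphism, and surjectivity via compactness) to the discussion preceding the statement and to standard facts, whereas you spell them out explicitly; this is only a difference in the level of detail, not in the argument.
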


\begin{proof}
  By the comment before the statement of the proposition, we have to
  show just one detail. Let $w,w'\in\Om x{Un}$ be such that $w_U\ne
  w'_U$. Then there is $u\in U$ such that $w_U(u)\ne w'_U(u)$. Since
  $U$ is an inverse limit of $A$-algebras $(U_i)_{i}$, there is $i$
  such that $\varphi_i(w_U(u))\ne \varphi_i(w'_U(u))$. However, from
  Corollary~\ref{c:implops}, we know that
  $\varphi_i\bigl(w_U(u)\bigr)=w_{U_i}\bigl(\varphi_i(u)\bigr)=
  \reallywidecheck{\varphi_i}(w_U)\bigl(\varphi_i(u)\bigr)$ and,
  similarly, $\varphi_i\bigl(w'_U(u)\bigr)=
  \reallywidecheck{\varphi_i}(w'_U)\bigl(\varphi_i(u)\bigr)$. It
  follows that
  $\reallywidecheck{\varphi_i}(w_U)\ne\reallywidecheck{\varphi_i}(w'_U)$
  because they map $\varphi(u)\in U_i$ to distinct points.
\end{proof}

\section{From profinite monoids to profinite algebras}
\label{sec:from-monoid-to-algebra}

In Secion~\ref{sec:unary} we constructed, for a given semigroup, a
certain $A$-algebra. We generalize that construction for an arbitrary
signature. However, the unary case is the one we study in most detail.
Once again, we work with monoids instead of semigroups in this
section.

Let $\Sigma$ be an arbitrary signature, let $M$ be a monoid, and let
$\gamma : \Sigma \rightarrow M$ be a mapping. We denote by
$\kappa_\gamma (M)$ the following $\Sigma$-algebra: the domain is $M$,
and for every arity $n$, we consider the evaluation mapping %
$E_n: \Sigma_n \times M^n \rightarrow M$ given by
\begin{displaymath}
  E_n(w,m_1,\dots , m_n)=\gamma(w)\cdot m_1 \cdot \dots \cdot m_n .
\end{displaymath}
This construction is robust in the sense of the following lemmas.

\begin{Lemma}
\label{l:kappa-preserves-homomorphisms}
Let $\varphi : M \rightarrow N$ be a monoid homomorphism. Let
$\kappa_\gamma (M)$ be the $\Sigma$-algebra defined for a fixed
mapping $\gamma : \Sigma \rightarrow M$ and %
$\kappa_{\varphi\circ \gamma} (N)$ be the $\Sigma$-algebra defined for
the mapping $\varphi\circ \gamma : \Sigma \rightarrow N$. Then %
$\varphi : \kappa_\gamma (M) \rightarrow \kappa_{\varphi\circ\gamma}
(N)$ is a homomorphism of $\Sigma$-algebras.
\end{Lemma}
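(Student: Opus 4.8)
The plan is to verify directly the defining equation of a homomorphism of $\Sigma$-algebras, namely that $\varphi$ commutes with each basic operation. The point is that in both $\kappa_\gamma(M)$ and $\kappa_{\varphi\circ\gamma}(N)$ the operations are expressed purely in terms of the ambient monoid multiplication together with the images of the operation symbols under $\gamma$ (respectively $\varphi\circ\gamma$). So the whole statement should reduce to the fact that $\varphi$ preserves finite products.

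First I would fix an arity $n$, an operation symbol $w\in\Sigma_n$, and elements $m_1,\dots,m_n\in M$. By the definition of $\kappa_\gamma(M)$, the value of the operation $w$ on this tuple is $E_n(w,m_1,\dots,m_n)=\gamma(w)\cdot m_1\cdots m_n$. Applying $\varphi$ and using that $\varphi$ is a monoid homomorphism, and hence preserves finite products, I obtain $\varphi(\gamma(w))\cdot\varphi(m_1)\cdots\varphi(m_n)$.

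On the other side, the operation $w$ in $\kappa_{\varphi\circ\gamma}(N)$ evaluated at the image tuple $(\varphi(m_1),\dots,\varphi(m_n))$ equals, again by definition, $(\varphi\circ\gamma)(w)\cdot\varphi(m_1)\cdots\varphi(m_n)=\varphi(\gamma(w))\cdot\varphi(m_1)\cdots\varphi(m_n)$. Comparing the two expressions shows they coincide, which is exactly the homomorphism condition for the operation symbol $w$. Since $w$ and the tuple $(m_1,\dots,m_n)$ were arbitrary, this establishes the condition for all basic operations and completes the argument.

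I do not expect any genuine obstacle here: the only subtlety is bookkeeping the roles of $\gamma$ and $\varphi\circ\gamma$, so that the ``constant part'' $\gamma(w)$ is correctly transported along $\varphi$. This is precisely the reason the target algebra is built from $\varphi\circ\gamma$ rather than from $\gamma$; with that choice the verification is a one-line computation resting solely on the multiplicativity of $\varphi$.
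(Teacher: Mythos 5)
Your proof is correct and follows essentially the same route as the paper: the paper phrases the verification as the commutativity of the square relating the evaluation maps $E_n^M$ and $E_n^N$ via $\mathrm{id}_{\Sigma_n}\times\varphi^n$ and $\varphi$, and the computation underlying that commutativity is exactly your one-line calculation using multiplicativity of $\varphi$ and the definition of $\kappa_{\varphi\circ\gamma}(N)$.
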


\begin{proof}
  The following diagram commutes, for every arity $n$:
  \begin{displaymath}
    \xymatrix{
      \Sigma_n\times M^n  \ar[r]^(.6){E_n^M}
      \ar[d]^{\mathrm{id}_{\Sigma_n}\times \varphi^n} 
      & M \ar[d]^{\varphi} \\
      \Sigma_n\times N^n  \ar[r]^(.6){E_n^N}
      & **[r]{N.} }
  \end{displaymath}
  It remains to observe that the commutativity of the diagram means
  that $\varphi$ is a homomorphism from the $\Sigma$-algebra
  $\kappa_\gamma (M)$ to the $\Sigma$-algebra
  $\kappa_{\varphi\circ\gamma} (N)$.
\end{proof}

\begin{Lemma}
  Let $\kappa_\gamma (M)$ be the $\Sigma$-algebra defined for a fixed
  monoid $M$ and a mapping $\gamma : \Sigma \rightarrow M$. Then the
  following hold: 
  \begin{enumerate}[(i)]
  \item If $M$ is a topological monoid, then $\kappa_\gamma (M)$
    is a topological $\Sigma$-algebra.
  \item If $M$ is a profinite monoid, then $\kappa_\gamma (M)$ is a
    profinite $\Sigma$-algebra.
  \end{enumerate}
\end{Lemma}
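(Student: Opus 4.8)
The plan is to treat the two parts in sequence: part (i) reduces to checking that each evaluation map of $\kappa_\gamma(M)$ is continuous, and part (ii) then follows by combining the compactness coming from part (i) with a residual-finiteness argument powered by Lemma~\ref{l:kappa-preserves-homomorphisms}. No genuinely new construction is needed; the work is all in recognizing that the separating homomorphisms witnessing profiniteness of $M$ transport directly to $\Sigma$-algebra homomorphisms.

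For part (i), I would observe that each evaluation map $E_n$ of $\kappa_\gamma(M)$ factors as a composite of continuous maps, namely
\begin{displaymath}
  \Sigma_n\times M^n
  \xrightarrow{\ (\gamma|_{\Sigma_n})\times\mathrm{id}_{M^n}\ }
  M^{n+1}
  \xrightarrow{\ \mu\ }
  M,
\end{displaymath}
where $\mu:M^{n+1}\to M$ denotes the $(n+1)$-fold multiplication. Since $M$ is a topological monoid, its binary multiplication is continuous, whence so is the iterated multiplication $\mu$, being a composite of copies of the binary one; and $\gamma|_{\Sigma_n}$ is continuous (automatically so when each $\Sigma_n$ is discrete, and by the standing continuity hypothesis on $\gamma$ in the topological-signature case). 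Thus each $E_n$ is continuous, so $\kappa_\gamma(M)$ is a topological $\Sigma$-algebra.

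For part (ii), I would first note that, since $M$ is profinite, its underlying space is compact and zero-dimensional; as $\kappa_\gamma(M)$ has underlying space $M$, part (i) shows it is a compact $\Sigma$-algebra, and it remains only to establish residual finiteness. Given distinct points $m,m'\in M$, profiniteness of $M$ furnishes a continuous monoid homomorphism $\varphi:M\to N$ onto a finite monoid $N$ with $\varphi(m)\ne\varphi(m')$. By Lemma~\ref{l:kappa-preserves-homomorphisms}, the very same map $\varphi$ is a $\Sigma$-algebra homomorphism $\kappa_\gamma(M)\to\kappa_{\varphi\circ\gamma}(N)$; it is continuous because it is continuous on underlying spaces, and its codomain $\kappa_{\varphi\circ\gamma}(N)$ is a finite $\Sigma$-algebra since $N$ is finite. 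As $\varphi$ separates $m$ from $m'$, this exhibits the required continuous homomorphism into a finite $\Sigma$-algebra, so $\kappa_\gamma(M)$ is residually finite and hence a profinite $\Sigma$-algebra.

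I expect the only delicate point to be the bookkeeping around continuity hypotheses rather than any substantive difficulty: part (i) genuinely requires each restriction $\gamma|_{\Sigma_n}$ to be continuous, which is automatic for a discrete signature but must be assumed otherwise, and one must check in part (ii) that $\varphi\circ\gamma|_{\Sigma_n}$ lands continuously in the finite discrete space $N$ so that $\kappa_{\varphi\circ\gamma}(N)$ is a bona fide member of $\pv{Fin}_\Sigma$ --- this is immediate from continuity of $\gamma$ and of $\varphi$. Everything else is formal, since Lemma~\ref{l:kappa-preserves-homomorphisms} does the real work of converting the separating monoid homomorphism into a $\Sigma$-algebra homomorphism.
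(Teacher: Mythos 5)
Your proof is correct and follows essentially the same route as the paper: part (i) by continuity of the iterated multiplication (the paper simply declares this ``clear''), and part (ii) by transporting the separating continuous monoid homomorphism onto a finite monoid into a continuous $\Sigma$-algebra homomorphism via Lemma~\ref{l:kappa-preserves-homomorphisms}. Your version is slightly more careful than the paper's in making explicit the compactness of the underlying space and the continuity requirements on $\gamma$, but the substance is identical.
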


\begin{proof}
  The first statement is clear, since $E_n$ is continuous whenever the
  multiplication on $M$ is continuous.
  
  Assume that $M$ is profinite. For a pair of distinct elements
  $m\not=m'$, there is a continuous homomorphism $\varphi : M
  \rightarrow N$ onto a finite monoid $N$ such that
  $\varphi(m)\not=\varphi(m')$. By
  Lemma~\ref{l:kappa-preserves-homomorphisms} we know that $\varphi$
  is a homomorphism of $\Sigma$-algebras, which is continuous, as
  topologies are kept.
\end{proof}

Now we move our attention to the unary case, that is, the case when
$\Sigma_1=A$, and $\Sigma_n=\emptyset$ for every $n> 1$. If we look at
Section~\ref{sec:unary}, we see that the results deal with
$A$-generated semigroups. In the setting of this section, this
restriction may be expressed as $\gamma(\Sigma)$ generating a dense
submonoid of the profinite monoid~$M$. Moreover, the constructed
profinite $A$-algebra $\kappa_\gamma(M)$ is generated by the element
$1\in M$ (again in the algebraic-topological sense). To put it more
formally, we let $\Sigma_0=\{1\}$ and we deal with profinite
$\Sigma$-algebras which have no proper closed subalgebras.
Then the natural homomorphisms between our structures are
automatically surjective. This restriction fits to observations from
Section~\ref{sec:functor-mu}. In this way we fix categories for which
we want to relate constructions from Section~\ref{sec:functor-mu} and
this section.

Let $\mathbb{PUA}_A^1$ be the category whose objects are profinite
unary $\Sigma$-algebras without proper closed subalgebras and
morphisms are continuous homomorphisms between such algebras. In
particular, every morphism is an onto mapping, because every object
$U$ is a profinite $A$-algebra $U$ generated by the single element
$1^U$, where $1\in\Sigma_0$ is the unique nullary operational symbol.
Notice that $\mathbb{PUA}_A^1$ is a thin category, that is, for every
two objects $U$ and $V$ there is at most one morphism from $U$ to $V$.

Furthermore, let $\mathbb{PM}_A$ be the category defined in the
following way. The objects are $A$-generated profinite monoids, or
more formally, mappings $\gamma : A \rightarrow M$, where $M$ is a
profinite monoid such that the closed submonoid generated by
$\mathrm{Im}(\gamma)$ is $M$. And morphisms are surjective continuous
monoid homomorphisms or, more formally, $\varphi$ is a morphism from
$\gamma : A \rightarrow M$ to $\beta :A\rightarrow N$ if $\varphi : M
\rightarrow N$ is a continuous monoid homomorphism and $\varphi \circ\
\gamma =\beta$. Also the category $\mathbb{PM}_A$ is thin and all
morphisms are surjective mappings.

Recall, that for an object $U$ in $\mathbb{PUA}_A^1$ we have
constructed the profinite monoid $\mu (U)$. This is not formally an
object in the category $\mathbb{PM}_A$, since we need to fix a mapping
$\gamma : A \rightarrow \mu (U)$. However, this mapping is canonically
determined in Lemma~\ref{l:closure-of-MU}. Indeed, since $\Cl M_U$ is
generated (as a monoid) by transformations given by $a\in A$, we have
$\gamma (a)=(ax)_U\in \mu(U)$. The same lemma ensures that this
mapping $\gamma : A \rightarrow \mu(U)$ is an object in the category
$\mathbb{PM}_A$. We denote this $\gamma$ by $\Gamma(U)$. Moreover,
if there is a morphism $\varphi :U \rightarrow V$ in the category
$\mathbb{PUA}_A^1$, then $\reallywidecheck\varphi$ is a morphism
between the objects $\Gamma(U)$ and $\Gamma(V)$ by
Lemma~\ref{l:onto-homomorphisms} and the considerations before that
lemma. Thus, we denote $\Gamma (\varphi) =\reallywidecheck\varphi$ and
we see that $\Gamma : \mathbb{PUA}_A^1 \rightarrow \mathbb{PM}_A$
is a functor of these thin categories by
Lemma~\ref{l:composition-vs-hat}.

Now, for a given $\gamma : A \rightarrow M$ in $\mathbb{PM}_A$, the
$A$-algebra $\kappa_\gamma (M)$ is profinite and generated by the
element $1\in M$. Thus, we may denote $\Psi(\gamma)=\kappa_\gamma
(M)$, which is an object in $\mathbb{PUA}_A^1$. If $\varphi$ is a
morphism in $\mathbb{PM}_A$, in particular it is a surjective
homomorphism $\varphi : M \rightarrow N$, then by
Lemma~\ref{l:kappa-preserves-homomorphisms} the mapping $\varphi :
\Psi(\gamma) \rightarrow \Psi (\varphi\circ\gamma)$ is a continuous
homomorphism of $A$-algebras. If we denote it as $\Psi(\varphi)$, then
we get a functor $\Psi : \mathbb{PM}_A \rightarrow \mathbb{PUA}_A^1$
between these thin categories. The only detail that might require some
checking is that $\Psi(\varphi)\circ
\Psi(\varphi')=\Psi(\varphi\circ\varphi')$ for a pair of morphisms
$\varphi : \gamma \rightarrow (\varphi\circ\gamma)$ and $\varphi' :
(\varphi\circ\gamma) \rightarrow (\varphi'\circ\varphi\circ\gamma)$ in
the category $\mathbb{PM}_A$. But this is trivial since
$\Psi(\psi)=\psi$ for every object $\psi$ in $\mathbb{PM}_A$.

We may interpret the monoid analog of Lemma~\ref{l:Cayley-profinite}
with respect to Proposition~\ref{p:alternative-construction} as the
following statement.

\begin{Lemma}
  \label{l:isomorphism-via-functors}
  For every object $\gamma$ from $\mathbb{PM}_A$, we have 
  $\Gamma(\Psi(\gamma))\cong \gamma$. \qed 
\end{Lemma}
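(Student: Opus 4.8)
The plan is to make both functors completely explicit on an object $\gamma\colon A\to M$ of $\mathbb{PM}_A$ and then recognize $\Gamma(\Psi(\gamma))$ as the left regular representation of $M$. First I would unwind $\Psi(\gamma)=\kappa_\gamma(M)$: its underlying space is $M$, its distinguished generator is the monoid identity $1\in M$ (the value of the nullary symbol), and each $a\in A$ acts as the left translation $L_{\gamma(a)}\colon m\mapsto\gamma(a)\cdot m$. Consequently, for a word $w=a_1\cdots a_k\in A^*$ the induced transformation of $U=\kappa_\gamma(M)$ is $w_U=L_{\gamma(w)}$, where $\gamma$ is extended to a monoid homomorphism $A^*\to M$; thus $\Cl M_U=\{L_s:s\in\gamma(A^*)\}$. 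By definition, $\Gamma(\Psi(\gamma))$ is the profinite monoid $\mu(\kappa_\gamma(M))=\mu(U)$ together with the mapping $\delta\colon A\to\mu(U)$, $\delta(a)=(ax)_U=L_{\gamma(a)}$.

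The heart of the argument is to identify $\mu(U)$ with the monoid of all left translations of $M$. I would consider the left regular representation $\lambda\colon M\to\Cl C(U)$, $s\mapsto L_s$. It is a monoid homomorphism since $L_{st}=L_s\circ L_t$, and it is continuous because it is the adjoint of the continuous multiplication $M\times M\to M$ (the same exponential-law argument that gives continuity of $\tau_U^{\pv V}$ in Lemma~\ref{l:closure-of-MU}). Since $M$ is compact, $\lambda(M)$ is compact, hence closed in the Hausdorff space $\Cl C(U)$; as $\gamma(A^*)$ is dense in $M$ and $\Cl M_U=\lambda(\gamma(A^*))$, continuity of $\lambda$ gives $\lambda(M)=\lambda(\overline{\gamma(A^*)})\subseteq\overline{\Cl M_U}$, while closedness of $\lambda(M)$ gives the reverse inclusion, so by Lemma~\ref{l:closure-of-MU}(\ref{item:closure-of-MU-3}) we obtain $\lambda(M)=\overline{\Cl M_U}=\mu(U)$. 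Finally, $\lambda$ is injective because $L_s(1)=s$ recovers $s$ from $L_s$; being a continuous bijective monoid homomorphism between profinite monoids, it is an isomorphism of topological monoids. It then remains to match the $A$-structure: $\lambda(\gamma(a))=L_{\gamma(a)}=\delta(a)$, so $\lambda$ is an isomorphism in $\mathbb{PM}_A$ carrying $\gamma$ to $\Gamma(\Psi(\gamma))=\delta$, which is the assertion.

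The only genuinely non-formal points are the identification $\mu(U)=\lambda(M)$, which combines the density of $\gamma(A^*)$ with the closedness of the left-translation monoid, and the faithfulness of $\lambda$, which is the monoid version of Cayley's theorem and is exactly where it is essential that the distinguished point be the identity of $M$. As an alternative that follows the hint preceding the statement, one can instead present $M=\varprojlim M_i$ as an inverse limit of finite $A$-generated monoids $\gamma_i\colon A\to M_i$; by Lemma~\ref{l:kappa-preserves-homomorphisms} the connecting homomorphisms yield $\kappa_\gamma(M)=\varprojlim\kappa_{\gamma_i}(M_i)$, for each finite factor the finite Cayley theorem gives $\mu(\kappa_{\gamma_i}(M_i))=\Cl M_{\kappa_{\gamma_i}(M_i)}\cong M_i$ compatibly with the $A$-structure, and Proposition~\ref{p:alternative-construction} then assembles these into $\mu(\kappa_\gamma(M))\cong\varprojlim M_i=M$. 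Since $\mathbb{PM}_A$ is thin, exhibiting a single such structure-preserving isomorphism suffices.
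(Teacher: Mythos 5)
Your argument is correct, and its main line is genuinely different from the paper's. The paper states this lemma with no separate proof, justifying it only as ``the monoid analog of Lemma~\ref{l:Cayley-profinite} with respect to Proposition~\ref{p:alternative-construction}'' --- which is exactly the inverse-limit route you sketch in your final paragraph: write $M=\varprojlim M_i$, apply the finite Cayley theorem to each $\kappa_{\gamma_i}(M_i)$, and reassemble using the limit-compatibility of $\mu$. Your primary argument instead avoids inverse limits entirely: you exhibit the left regular representation $\lambda\colon M\to\Cl C(\kappa_\gamma(M))$, identify its image with $\mu(\kappa_\gamma(M))$ by combining the density of $\gamma(A^*)$ in $M$ (which is what membership of $\gamma$ in $\mathbb{PM}_A$ means) with the closedness of the compact image $\lambda(M)$ and Lemma~\ref{l:closure-of-MU}(\ref{item:closure-of-MU-3}), and get injectivity by evaluating at $1$. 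All the ingredients are available in the paper (the exponential law for continuity of $\lambda$, compactness for the automatic homeomorphism), and the identification of the $A$-structure via $\lambda\circ\gamma=\delta$ is exactly what is needed for an isomorphism in $\mathbb{PM}_A$. What your direct route buys is a self-contained proof with an explicit isomorphism ($s\mapsto L_s$, with inverse $f\mapsto f(1)$) that does not lean on Proposition~\ref{p:alternative-construction}; what the paper's route buys is brevity, since the finite Cayley observation and the statement that $\mu$ commutes with inverse limits are already on record. Either version is complete.
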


Composing the functors in the reverse order leads to the following
result.

\begin{Lemma}
  \label{l:psi-after-gamma}
  For every object $U$ from $\mathbb{PUA}_A^1$,
  there exists a surjective continuous homomorphism
  from $\Psi(\Gamma(U))$ to $U$.
\end{Lemma}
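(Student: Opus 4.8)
The plan is to realize the desired surjection as evaluation at the distinguished generator $1^U$ of $U$, in direct analogy with the finite/discrete statement of Lemma~\ref{l:A-algebra-from-ts}. Recall that $\Psi(\Gamma(U))=\kappa_\gamma(\mu(U))$ has underlying set $\mu(U)$, whose elements are genuine continuous transformations of~$U$ (a subset of $\Cl C(U)$), that its nullary constant is the neutral element $\mathrm{id}_U$ of $\mu(U)$, and that its $a$-operation, for $a\in A$, sends $s$ to $\gamma(a)\cdot s=a_U\circ s$ (composition in the monoid $\mu(U)$). Accordingly, I would define
\begin{displaymath}
  \Phi:\Psi(\Gamma(U))\to U,\qquad \Phi(s)=s(1^U),
\end{displaymath}
that is, $\Phi$ evaluates the transformation $s$ at the generator $1^U$.

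First I would verify that $\Phi$ is a homomorphism of $A$-algebras. For the nullary symbol, $\Phi(\mathrm{id}_U)=\mathrm{id}_U(1^U)=1^U$, so $\Phi$ maps the constant to the constant. For $a\in A$ and $s\in\mu(U)$, one computes directly
\begin{displaymath}
  \Phi(a_U\circ s)=(a_U\circ s)(1^U)=a_U\bigl(s(1^U)\bigr)=a_U\bigl(\Phi(s)\bigr),
\end{displaymath}
which is exactly the $a$-operation of $U$ applied to $\Phi(s)$; hence $\Phi$ respects every operation. Continuity of $\Phi$ is immediate from the definition of the compact-open topology: for a clopen set $O\subseteq U$, the preimage $\Phi^{-1}(O)$ is the trace on $\mu(U)$ of the subbasic set $[\{1^U\},O]=\{f\in\Cl C(U):f(1^U)\in O\}$, hence open. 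Since the clopen sets form a base of the profinite space $U$, this shows $\Phi$ is continuous, and no appeal to the general continuity of evaluation (which would need local compactness) is required, because we evaluate at a single fixed point.

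The crux is surjectivity, where I would invoke the defining property of the category $\mathbb{PUA}_A^1$. By Proposition~\ref{p:profinite-monoid-mu}, $\mu(U)$ is a profinite monoid, hence compact; as $\Phi$ is continuous, its image is compact and therefore closed in the Hausdorff space $U$. Being the image of a homomorphism of $A$-algebras, $\mathrm{Im}(\Phi)$ is a subalgebra, and it contains $1^U=\Phi(\mathrm{id}_U)$, so it is a nonempty closed subalgebra. Since $U$ has no proper closed subalgebras, $\mathrm{Im}(\Phi)=U$, which yields the required surjective continuous homomorphism. I expect this last step to be the main obstacle: the real content is to guarantee that $\mathrm{Im}(\Phi)$ is genuinely a \emph{closed} subalgebra --- which rests on the compactness of $\mu(U)$ --- so that the hypothesis that $U$ is topologically generated by $1^U$ can be brought to bear. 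The algebraic verification that $\Phi$ is a homomorphism and the topological verification of continuity are otherwise routine.
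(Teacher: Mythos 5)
Your proposal is correct and is essentially the paper's own proof: both define the map as evaluation at the distinguished generator $1^U$ and exploit that $\mu(U)$ consists of continuous transformations of~$U$. The only (cosmetic) differences are that you verify the homomorphism property and continuity directly from the compact-open topology, and deduce surjectivity from the image being a closed subalgebra, whereas the paper factors everything through $\tau_U$ and the canonical surjection $\Om{x}{Un}\to U$; these amount to the same facts.
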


\begin{proof}
  Recall that $\Psi(\Gamma(U))=\kappa_{\Gamma(U)}(\mu (U))$ is the
  $A$-algebra with domain $\mu (U)=\{w_U : w\in\Om{x}{Un}\}$, where 
  $U$ is an $A$-algebra generated by the element $1^U$.
  We define the mapping $\beta : \mu (U) \rightarrow U$
  by the rule $\beta(w_U)=w_U(1^U)$ for every $w\in \Om{x}{Un}$. 
  This definition is correct as $w_U=w'_U$ implies $w_U(1^U)=w'_U(1^U)$.
  Moreover, $\beta$ is surjective because $U$ is generated by $1^U$ as 
  a topological algebra.
  
  By the definition of $w_U$, we have
  $\beta(w_U)=\varphi(w)$, where $\varphi :
  \Om{x}{Un} \rightarrow U$ is the unique continuous homomorphism of
  $\Sigma$-algebras.
  From the definition of $\beta$ we obtain that the
  following diagram commutes:
  \begin{displaymath}
    \xymatrix@C=1mm{
      \Om x{Un}
      \ar[rr]^{\tau_U}
      \ar[rd]_\varphi
      &&
      \mu(U)
      \ar[ld]^\beta
      \\
      &
      **[r]{U.}
      &
    }
  \end{displaymath}
  Since $\tau_U$ is a continuous homomorphism of $A$-generated
  topological monoids, it is also a homomorphism of $A$-algebras.
  Since so is~$\varphi$, it follows that $\beta$ is again a
  homomorphism of $A$-algebras. Since both mappings $\tau_U$ and
  $\varphi$ are continuous, we also conclude that $\beta$ is
  continuous: if we consider a closed subset $C$ of $U$, then
  $\beta^{-1}(C)=\tau_U(\varphi^{-1}(C))$ where $\varphi$ is
  continuous and $\tau_U$ is closed.
\end{proof}

We use the previous observations to establish the following result.

\begin{Thm} 
  \label{t:left-adjoint}
  The functor $\Psi: \mathbb{PM}_A  \rightarrow \mathbb{PUA}_A^1$
  is left-adjoint to the functor
  $\Gamma : \mathbb{PUA}_A^1 \rightarrow \mathbb{PM}_A$. 
\end{Thm}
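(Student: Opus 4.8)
The plan is to exploit the crucial feature, recorded in the construction of both categories, that $\mathbb{PUA}_A^1$ and $\mathbb{PM}_A$ are \emph{thin}: between any two objects there is at most one morphism. For an adjunction between thin categories, the required natural isomorphism
\[
  \mathrm{Hom}_{\mathbb{PUA}_A^1}(\Psi(\gamma),U)
  \;\cong\;
  \mathrm{Hom}_{\mathbb{PM}_A}(\gamma,\Gamma(U))
\]
degenerates to the mere equivalence
\[
  \text{there is a morphism }\Psi(\gamma)\to U
  \quad\Longleftrightarrow\quad
  \text{there is a morphism }\gamma\to\Gamma(U),
\]
since each hom-set has at most one element; moreover, naturality in both variables is automatic, because any two parallel morphisms in a thin category coincide, so every naturality square commutes. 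Hence the whole proof reduces to establishing this equivalence, and its two halves are supplied by the counit and unit already built in Lemmas~\ref{l:psi-after-gamma} and~\ref{l:isomorphism-via-functors}.

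For the implication from right to left, I would start from a morphism $g:\gamma\to\Gamma(U)$ in $\mathbb{PM}_A$, apply the functor $\Psi$ to obtain $\Psi(g):\Psi(\gamma)\to\Psi(\Gamma(U))$, which is a continuous homomorphism of $A$-algebras by Lemma~\ref{l:kappa-preserves-homomorphisms}, and then compose with the surjective counit $\epsilon_U:\Psi(\Gamma(U))\to U$ furnished by Lemma~\ref{l:psi-after-gamma}. The composite $\epsilon_U\circ\Psi(g)$ is then a continuous homomorphism $\Psi(\gamma)\to U$, hence a morphism of $\mathbb{PUA}_A^1$, as desired.

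For the converse, I would take a morphism $f:\Psi(\gamma)\to U$ in $\mathbb{PUA}_A^1$, apply $\Gamma$ to get $\Gamma(f)=\reallywidecheck{f}:\Gamma(\Psi(\gamma))\to\Gamma(U)$, which is a surjective continuous monoid homomorphism respecting generators by Lemma~\ref{l:onto-homomorphisms}, and precompose with the unit isomorphism $\eta_\gamma:\gamma\to\Gamma(\Psi(\gamma))$ provided by Lemma~\ref{l:isomorphism-via-functors}. This yields a morphism $\gamma\to\Gamma(U)$ in $\mathbb{PM}_A$. The two implications together give the hom-set equivalence, and thinness then promotes it to the natural isomorphism witnessing $\Psi\dashv\Gamma$; the triangle identities, being equalities of morphisms in thin categories, hold for free.

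The substance of the argument is therefore entirely carried by the unit and counit already in hand, and the only points requiring care are bookkeeping: that each constructed composite really is a morphism of the appropriate category, that is, surjective, continuous, and respecting either the nullary generator $1$ or the chosen map from $A$. These are exactly what Lemmas~\ref{l:onto-homomorphisms}, \ref{l:kappa-preserves-homomorphisms}, \ref{l:isomorphism-via-functors}, and~\ref{l:psi-after-gamma} guarantee. I expect the main conceptual step to be the initial reduction: recognizing that thinness collapses the usual verification of naturality and of the triangle identities, leaving nothing beyond the two existence statements to check.
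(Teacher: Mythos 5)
Your proposal is correct and follows essentially the same route as the paper: both reduce the adjunction to a mere existence-of-morphisms equivalence via thinness of the two categories, obtain one direction by applying $\Gamma$ and using the isomorphism $\Gamma(\Psi(\gamma))\cong\gamma$ of Lemma~\ref{l:isomorphism-via-functors}, and the other by applying $\Psi$ and composing with the surjection $\Psi(\Gamma(U))\to U$ of Lemma~\ref{l:psi-after-gamma}. The only difference is cosmetic: you make explicit that naturality and the triangle identities are automatic in thin categories, which the paper states more briefly.
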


\begin{proof}
  To show that the functors are adjoint, we have to show that,
  for every $\gamma\in \mathbb{PM}_A$ and $U\in\mathbb{PUA}_A^1$, we have 
  $$\operatorname{hom}_{\mathbb{PUA}_A^1}\big(\Psi(\gamma),U\big)\cong
  \operatorname{hom}_{\mathbb{PM}_A}\big(\gamma,\Gamma(U)\big) ,$$ where
  the family of bijections is natural. 
  Since the categories are thin, the naturality is trivial whenever 
  we show that there exists a morphism from  $\Psi(\gamma)$ to $U$ if and only 
  if there exists a morphism from $\gamma$ to $\Gamma(U)$.
 
  The implication from left to right is clear because it is enough to
  apply the functor $\Gamma$ and use
  Lemma~\ref{l:isomorphism-via-functors}. Similarly, if we have a
  morphism $\alpha :\gamma \rightarrow \Gamma(U)$, then we may use the
  functor $\Psi$ and we get $\Psi(\alpha) :\Psi(\gamma) \rightarrow
  \Psi(\Gamma(U))$. If we compose this morphism with the morphism
  given by Lemma~\ref{l:psi-after-gamma}, then we
  get an appropriate morphism from $\Psi(\gamma)$ to $U$.
\end{proof}

The main application of the previous result is that the functor
$\Gamma$ is \emph{continuous} in the sense that it preserves limits
while $\Psi$ is \emph{cocontinuous}, that is, it preserves colimits.
We already showed in Proposition~\ref{p:alternative-construction},
that $\Gamma$ preserves inverse limits, which are a special case of
limits with directed diagrams. Thus
Proposition~\ref{p:alternative-construction} may be viewed as a
consequence of Theorem~\ref{t:left-adjoint}. However, in fact, we used
Proposition~\ref{p:alternative-construction} in the proof of
Theorem~\ref{t:left-adjoint} via
Lemma~\ref{l:isomorphism-via-functors}. %
It should also be pointed out that the discrete analog of
Theorem~\ref{t:left-adjoint} has been previously considered
in~\cite{Planting:2013}.

\section{The Polish representation}
\label{sec:Polish}

Recall the \emph{Polish notation} for a term, which
basically drops from the usual notation all parentheses and commas.
For instance, for the term
\begin{displaymath}
  u(v(x_1,u(x_2,x_1),x_3),u(x_3,x_2)),
\end{displaymath}
where $u\in\Sigma_2$ and $v\in\Sigma_3$, the Polish notation is the
word $uvx_1ux_2x_1x_3ux_3x_2$ in the alphabet $X\cup\Sigma$. The idea
is that the arity of the operation symbols allows one to uniquely
recover each term from its Polish notation. The Polish notations of
terms thus live in the free monoid $(X\cup\Sigma)^*$.

Let \pv V be a pseudovariety of monoids. The mapping
$\gamma:\Sigma\to\Om{X\cup\Sigma}V$ obtained by restriction of the
natural generating mapping $X\cup\Sigma\to\Om{X\cup\Sigma}V$
determines on the set $M=\Om{X\cup\Sigma}V$ a structure of profinite $\Sigma$-algebra,
namely $\kappa_\gamma(M)$, as argued at the beginning of
Section~\ref{sec:from-monoid-to-algebra}.
The closed subalgebra generated by $X$ is denoted $S_{\pv V}$. Note
that it is a profinite $\Sigma$-algebra.

Following \cite{Almeida&Costa:2015hb}, we say that a profinite
algebra $S$ with generating mapping $\iota:X\to S$ is
\emph{self-free}, or that $S$ is \emph{self-free with basis $X$},
if every continuous mapping $\varphi:X\to S$ induces a continuous
endomorphism $\hat{\varphi}$ of~$S$ such that
$\hat{\varphi}\circ\iota=\varphi$.

\begin{Prop}
  \label{p:self-free}
  For every pseudovariety of monoids \pv V, the profinite $\Sigma$-algebra
  $S_{\pv V}$ is self-free with basis $X$. 
\end{Prop}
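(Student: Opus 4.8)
The plan is to produce the desired endomorphism of $S_{\pv V}$ by first building an endomorphism of the ambient profinite monoid $M=\Om{X\cup\Sigma}V$ and then restricting it. Given a continuous map $\varphi:X\to S_{\pv V}$, I would define a map $X\cup\Sigma\to M$ by sending each $x\in X$ to $\varphi(x)$ (viewed in $M$ through the inclusion $S_{\pv V}\subseteq M$) and each operation symbol $w\in\Sigma$ to $\gamma(w)$. This map is continuous, being continuous on each of $X$ and $\Sigma$, and $M$ is a pro-\pv V monoid, so by the universal property of the free pro-\pv V monoid it extends uniquely to a continuous monoid endomorphism $\Phi:M\to M$.

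The key observation is that $\Phi$, although defined merely as a monoid endomorphism, is automatically an endomorphism of the $\Sigma$-algebra $\kappa_\gamma(M)$. Indeed, $\Phi$ fixes each $\gamma(w)$ by construction, so for every $w\in\Sigma_n$ and $m_1,\dots,m_n\in M$ one computes
\[
\Phi\bigl(E_n(w,m_1,\dots,m_n)\bigr)
=\Phi(\gamma(w))\,\Phi(m_1)\cdots\Phi(m_n)
=\gamma(w)\,\Phi(m_1)\cdots\Phi(m_n)
=E_n\bigl(w,\Phi(m_1),\dots,\Phi(m_n)\bigr),
\]
using that $\Phi$ is a monoid homomorphism and the definition $E_n(w,m_1,\dots,m_n)=\gamma(w)\,m_1\cdots m_n$ of the operations of $\kappa_\gamma(M)$. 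Thus $\Phi$ respects all the $\Sigma$-operations.

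It then remains to show that $\Phi$ carries $S_{\pv V}$ into itself, so that its restriction $\hat\varphi:=\Phi|_{S_{\pv V}}$ is the required continuous $\Sigma$-algebra endomorphism. For this I would consider the preimage $\Phi^{-1}(S_{\pv V})$: since $\Phi$ is a continuous $\Sigma$-algebra homomorphism and $S_{\pv V}$ is a closed subalgebra, this preimage is again a closed subalgebra of $\kappa_\gamma(M)$, and it contains every $x\in X$ because $\Phi(x)=\varphi(x)\in S_{\pv V}$. As $S_{\pv V}$ is by definition the smallest closed subalgebra containing $X$, it follows that $S_{\pv V}\subseteq\Phi^{-1}(S_{\pv V})$, that is, $\Phi(S_{\pv V})\subseteq S_{\pv V}$. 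Finally $\hat\varphi(\iota(x))=\Phi(x)=\varphi(x)$ for all $x\in X$, as required by the definition of self-freeness.

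The individual steps are short; the only point that requires genuine care is the second paragraph, namely the recognition that forcing $\Phi$ to \emph{fix the operation symbols} is precisely what turns a free monoid endomorphism into a $\Sigma$-algebra endomorphism. I expect this to be the conceptual heart of the argument, whereas the extension via the universal property of $M$ and the closed-subalgebra preimage argument are routine.
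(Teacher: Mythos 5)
Your proposal is correct and follows essentially the same route as the paper: extend $\varphi$ to $X\cup\Sigma$ by fixing the operation symbols, invoke the universal property of $\Om{X\cup\Sigma}V$ to get a continuous monoid endomorphism, observe that fixing $\Sigma$ makes it a $\Sigma$-algebra endomorphism, and restrict to $S_{\pv V}$. You merely spell out two steps the paper leaves implicit (the computation showing the $\Sigma$-operations are respected, and the closed-subalgebra preimage argument for $\Phi(S_{\pv V})\subseteq S_{\pv V}$), both of which are exactly what the paper intends.
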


\begin{proof}
  Consider the natural generating mapping $\iota:X\to S_{\pv V}$ and
  let $\varphi:X\to S_{\pv V}$ be an arbitrary continuous mapping. We
  may extend $\varphi$ to a continuous function
  $X\cup\Sigma\to\Om{X\cup\Sigma}V$, which we still denote $\varphi$,
  by letting the restriction of $\varphi$ to~$\Sigma$ coincide with
  that of the natural generating mapping. By the universal property
  of~\Om{X\cup\Sigma}V, there is a unique continuous (monoid)
  endomorphism $\hat{\varphi}$ of~$\Om{X\cup\Sigma}V$ such that
  $\hat{\varphi}\circ\iota=\varphi$. Since $\hat{\varphi}$ is the
  identity on~$\Sigma$, $\hat{\varphi}$ is a homomorphism of
  $\Sigma$-algebras. Since $\varphi(X)\subseteq S_{\pv V}$, the
  restriction of~$\hat{\varphi}$ to~$S_{\pv V}$ takes its values
  in~$S_{\pv V}$. Hence, $\varphi$ does extend to a continuous
  endomorphism of~$S_{\pv V}$.
\end{proof}

Combining Proposition~\ref{p:self-free} and
\cite[Theorem~2.16]{Almeida&Costa:2015hb}, we obtain the following
result.

\begin{Cor}
  \label{c:S-V-free}\
  For every pseudovariety \pv V of monoids, discrete space $X$, and
  discrete signature $\Sigma$, there is a pseudovariety of
  $\Sigma$-algebras \Cl V such that the closed subalgebra $S_{\pv V}$
  of~\Om{X\cup\Sigma}V defined above is isomorphic with~$\Om X{\Cl
    V}$.
\end{Cor}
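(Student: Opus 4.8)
The plan is to obtain the result as a direct application of the cited characterization of self-free profinite algebras, so that the only real work lies in checking that our situation meets its hypotheses. First I would record the two inputs already at hand: by Proposition~\ref{p:self-free} the profinite $\Sigma$-algebra $S_{\pv V}$ is self-free with basis $X$, and, as noted in the paragraph introducing it, $S_{\pv V}$ is indeed a profinite $\Sigma$-algebra. Together with the standing assumptions of the corollary — that $X$ is a discrete space and $\Sigma$ a discrete signature — these are precisely the conditions under which \cite[Theorem~2.16]{Almeida&Costa:2015hb} applies.

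Next I would recall the content of \cite[Theorem~2.16]{Almeida&Costa:2015hb}: a profinite $\Sigma$-algebra that is self-free with a discrete basis $X$, over a discrete signature, is \emph{relatively free}, that is, isomorphic as a topological $\Sigma$-algebra, compatibly with the generating mapping, to the free pro-$\Cl V$ algebra $\Om X{\Cl V}$ for a suitable pseudovariety $\Cl V$ of $\Sigma$-algebras. The natural candidate for $\Cl V$ is the pseudovariety of $\Sigma$-algebras generated by $S_{\pv V}$, namely the closure under homomorphic images, subalgebras and finite direct products of the class of finite continuous homomorphic images of $S_{\pv V}$. With this choice, the universal property of $\Om X{\Cl V}$ furnishes a canonical continuous homomorphism $\Om X{\Cl V}\to S_{\pv V}$ extending $\iota$, while self-freeness provides a continuous endomorphism of $S_{\pv V}$ realizing the inverse assignment on the generators from~$X$; a routine density-and-separation argument then shows that these two maps are mutually inverse, giving the desired isomorphism.

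The step I expect to be the main obstacle is the bookkeeping needed to confirm that the notion of self-freeness established in Proposition~\ref{p:self-free} matches verbatim the hypothesis of \cite[Theorem~2.16]{Almeida&Costa:2015hb}. In particular I would check that the generating mapping $\iota:X\to S_{\pv V}$ qualifies as a \emph{basis} in the sense of that reference — that it is a topological embedding of the discrete space $X$ and that $X$ generates $S_{\pv V}$ as a closed subalgebra, which is exactly how $S_{\pv V}$ was defined — and that the existence of free pro-$\Cl V$ objects over $X$ for pseudovarieties $\Cl V$ of $\Sigma$-algebras, already guaranteed in Section~\ref{sec:prelims}, covers the signature in play here. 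Once this matching of hypotheses is verified, no further computation is required and the corollary follows immediately.
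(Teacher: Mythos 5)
Your proof takes exactly the route the paper does: the paper's entire argument is the one-line remark that the corollary follows by combining Proposition~\ref{p:self-free} with \cite[Theorem~2.16]{Almeida&Costa:2015hb}, which is precisely your plan. The extra detail you supply about the content of that theorem and the hypothesis-checking is reasonable elaboration but not a different argument.
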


For the pseudovariety \pv M of all finite monoids,
there is a natural (onto) continuous homomorphism $\Phi_X:\Om
X{Fin}_\Sigma\to S_{\pv M}$ of $\Sigma$-algebras, 
namely the only one such that
$\Phi_X(x)=x$ for each $x\in X$. We call it the \emph{Polish
  representation} (of~$\Om X{Fin}_\Sigma$).

The next result shows that $\Phi_X$ is in general not injective so
that even the pseudovariety of finite algebras \Cl V of
Corollary~\ref{c:S-V-free} corresponding to taking $\pv V=\pv M$ is a
proper subpseudovariety of~$\pv{Fin}_\Sigma$.

\begin{Prop}
  \label{p:not-faithful}
  Suppose that $\Sigma$ is a topological signature containing at least
  one symbol $u$ of arity at least~$n\ge2$. Let $X$ be a nonempty
  topological space. Then, the Polish representation $\Phi_X$ is not
  injective.
\end{Prop}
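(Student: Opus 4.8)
The plan is to exhibit two distinct points of $\Om X{Fin}_\Sigma$ with the same image under $\Phi_X$. First I would reduce to a single binary operation over a single generator. Fixing $x\in X$, the derived operation $v(p,q)=u(p,q,x,\dots,x)$, obtained by filling the last $\mathrm{ar}(u)-2$ slots with $x$, has Polish image $\gamma(u)\,p\,q\,x^{\mathrm{ar}(u)-2}$ in $M=\Om{X\cup\Sigma}M$; a construction carried out with $v$ and the leaf $x$ then transfers with only notational changes, the fixed suffixes $x^{\mathrm{ar}(u)-2}$ playing no role. So I may assume $u$ is binary and work over $\{x\}$.

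Second, I would isolate why the statement is genuinely a profinite phenomenon. On the discrete term algebra the Polish encoding is injective: giving weight $+1$ to $u$ and $-1$ to $x$, a word over $\{u,x\}$ is the Polish notation of a complete term exactly when its total weight is $-1$ while every proper prefix has weight $\ge 0$, and this prefix condition forces that the Polish word of a complete term is never a proper prefix-factor of that of another complete term. Consequently the root splitting $u\,A\,B=u\,A'\,B'$ is unique on finite terms, and by Corollary~\ref{c:S-V-free} the map $\Phi_X$ is injective if and only if the pseudovariety $\Cl V$ with $S_{\pv M}\cong\Om X{\Cl V}$ is all of $\pv{Fin}_\Sigma$; any failure of injectivity must therefore live among infinite pseudowords.

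Third, the mechanism I would exploit is that the prefix obstruction collapses in $M$. If $A$ is an \emph{infinite} complete-term pseudoword, then for every finite $k$ the length-$k$ prefix of $A\,t$ coincides with that of $A$, so the weight-of-prefixes condition imposes nothing on the factor $t$. Using the idempotents $\gamma(u)^\omega$ and $x^\omega$ I would manufacture a weight-$0$ pseudoword $t\ne1$ for which $A\,t$ is again a complete-term pseudoword and, at the same time, a right-subtree pseudoword $B$ that factors as $B=t\,B'$ with $B'$ a complete-term pseudoword. This produces two factorizations $u\,A\,B=u\,(A\,t)\,B'$ of a single $W\in S_{\pv M}$, hence two profinite terms $\sigma=u(\widehat A,\widehat B)$ and $\tau=u(\widehat{A\,t},\widehat{B'})$ with $\Phi_X(\sigma)=\Phi_X(\tau)=W$; here I would take $A$ and $A\,t$ to be infinite left combs whose exponents differ only through the idempotent collapse, so that $\widehat A\ne\widehat{A\,t}$.

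Finally, to certify $\sigma\ne\tau$ in $\Om X{Fin}_\Sigma$ I would separate them by a finite $\Sigma$-algebra that is \emph{not} of the form $\kappa_\gamma(N)$ --- for example the two-element algebra in which $u$ acts as a projection, whose evaluation sees the tree shape (say, the right subtree) rather than the flattened word. The hard part is the verification at the core of the construction: that the two factorizations yield the \emph{same} pseudoword, that is, that $A\,B=(A\,t)\,B'$ holds in $M$ and that no finite monoid separates the two sides. This is delicate precisely because it rests on the equidivisibility and idempotent structure of the free profinite monoid and not on any finite combinatorial identity; by contrast, once the common image $W$ and its two preimages are secured, the distinctness of $\sigma$ and $\tau$ and the construction of the separating non-associative algebra are comparatively routine.
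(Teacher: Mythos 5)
Your high-level strategy points in the right direction --- find two distinct elements of $\Om X{Fin}_\Sigma$ whose Polish images collide because of idempotent collapse in the free profinite monoid, then separate them by a finite $\Sigma$-algebra in which $u$ essentially only sees one argument --- and both halves of that plan match the paper in spirit. But there is a genuine gap: the core construction is never carried out. You explicitly defer ``the hard part,'' namely producing concrete pseudowords $A$, $t$, $B$, $B'$ with $B=tB'$, verifying that $A$, $At$, $B$, $B'$ all lie in $S_{\pv M}$ (i.e., are Polish images of actual profinite terms), and that $\widehat{A}\ne\widehat{At}$. None of this is routine: for an infinite pseudoword, ``being a complete-term pseudoword'' means membership in the closed subalgebra $S_{\pv M}$, and the assertion that appending a weight-zero $t$ to an infinite $A$ keeps you inside $S_{\pv M}$ requires exhibiting $At$ as a limit of term codes or as a value of the algebra operations --- exactly the content you omit. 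Your framing of the difficulty is also off: you say the needed equality should rest on equidivisibility ``and not on any finite combinatorial identity,'' but every equality in $\Om{X\cup\Sigma}M$ is, by residual finiteness, precisely an identity valid in all finite monoids, so no such escape is possible.

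For comparison, the paper resolves your ``hard part'' with an elementary idempotency computation rather than anything delicate: the pseudoidentity
\begin{displaymath}
  u^\omega(u^\omega x^{\omega+1})^{\omega+1}
  = u^\omega u^\omega(u^\omega x^{\omega+1})^{\omega+1} x^\omega
\end{displaymath}
holds in every finite monoid, and both sides are realized as limits of Polish images of explicit term sequences $t_{r!}$ and $s_{r!}$ built from nested copies of a term $w_k$; accumulation points $t,s$ of these sequences then satisfy $\Phi_X(t)=\Phi_X(s)$ with no factorization analysis at all. The separating algebra is the two-element algebra where $u$ returns the \emph{negation} of its last argument --- note that a bare projection, which is what you propose, would not separate the paper's $s$ and $t$ (both would evaluate to the same element), and whether it separates your $\sigma$ and $\tau$ cannot be checked because those elements were never pinned down. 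Your reduction to a binary operation and your observation that injectivity holds on the discrete term algebra are fine but inessential. As it stands, the proposal is a plausible research plan, not a proof.
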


\begin{proof}
  Let $x,y,z$ be three distinct variables, where we only require that
  $x\in X$. Then, writing $x^{\omega+1}$ for the product $xx^\omega$, the
  following equality holds in $\Om{X\cup\Sigma}M$ because
  $\omega$-powers are idempotents:
  \begin{equation}
    \label{eq:not-faithful-1}
    u^\omega(u^\omega x^{\omega+1})^{\omega+1} %
    =(u^\omega x^{\omega+1})^{\omega+1} %
    = u^\omega u^\omega(u^\omega x^{\omega+1})^{\omega+1} x^\omega.
  \end{equation}
  We may consider the $\Sigma$-term
  \begin{displaymath}
    w_k(x,y,z)=\underbrace{u\biggl(u\Bigl(\cdots u\bigl(u}_{k}(x,
    \underbrace{y,\ldots,y}_{n-1}),
    \underbrace{
      \underbrace{z,\ldots,z}_{n-1}\bigr),\ldots\Bigr),
      \underbrace{z,\ldots,z}_{n-1}}_{k-1}
    \biggr).
  \end{displaymath}
  We further let
  \begin{displaymath}
    t_k=w_k(w_k(x,x,x),w_k(x,x,x),w_k(x,x,x)).
  \end{displaymath}
  Note that $\Phi_X(t_k)=u^k(u^kx^{k(n-1)+1})^{k(n-1)+1}$. Thus, if
  $t$ is any accumulation point in~$\Om X{Fin}_\Sigma$ of the sequence
  $(t_{r!})_r$, then the continuity of~$\Phi_X$ yields the equality
  \begin{equation}
    \label{eq:not-faithful-2}
    \Phi_X(t)=u^\omega(u^\omega x^{\omega+1})^{\omega+1}.
  \end{equation}
  Similarly, we may consider the $\Sigma$-term
  $$s_k=w_k(t_k,x,x),$$
  and let $s$ denote an accumulation point in~$\Om X{Fin}_\Sigma$ of
  the sequence $(s_{r!})_r$. Again, continuity of~$\Phi_X$ gives the
  equality
  \begin{equation}
    \label{eq:not-faithful-3}
    \Phi_X(s)=u^\omega u^\omega(u^\omega x^{\omega+1})^{\omega+1}x^\omega.
  \end{equation}
  Combining the equations \eqref{eq:not-faithful-1},
  \eqref{eq:not-faithful-2}, and~\eqref{eq:not-faithful-3}, we
  conclude that, if $\Phi_X$ were injective, then the equality $t=s$
  would hold in $\Om X{Fin}_\Sigma$. 
  To show that this leads to a contradiction, we show that
  there is a continuous homomorphism $\xi:\Om X{Fin}_\Sigma\to A$ into
  a finite $\Sigma$-algebra such that $\xi(s)\ne\xi(t)$.
  
  Let $A=\{a,b\}$ and define on $A$ a structure of $\Sigma$-algebra by
  interpreting each operation in $\Sigma_m$ with $m\ne n$ as the
  constant operation with value $a$ and each operation $v\in\Sigma_n$
  by letting $v_A(a_1,\ldots,a_n)=\widetilde{a_n}$, where
  $\widetilde{a}=b$ and $\widetilde{b}=a$. In this way, $A$ is a
  finite discrete topological algebra. Choose $\xi$ to be any
  continuous homomorphism $\xi:\Om X{Fin}_\Sigma\to A$ such that
  $\xi(x)=a$. In view of the definition of $u_A$, we have
  $\xi\bigl(w_k(x,x,x)\bigr)=\widetilde{a}=b$ and so $\xi(t_k)=\widetilde{b}=a$
  while $\xi(s_k)=\widetilde{a}=b$. Since $\xi$ is continuous, it
  follows that $\xi(t)=a$ and $\xi(s)=b$, which establishes the claim.
\end{proof}

In contrast, free profinite unary algebras are faithfully represented
through the Polish representation, which is yet another result showing
that unary algebras are rather special.

\begin{Thm}
  \label{t:Polish-faithful-unary}
  Let $\Sigma=\Sigma_0\cup\Sigma_1$ be an at most unary topological
  signature and let $X$ be an arbitrary topological space. Then, the
  Polish representation $\Phi_X:\Om X{Fin}_\Sigma\to S_{\pv M}$ 
  is an isomorphism of topological algebras.
\end{Thm}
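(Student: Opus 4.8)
The plan is to leverage what is already known about $\Phi_X$: by construction it is a continuous \emph{onto} homomorphism of $\Sigma$-algebras, and both $\Om X{Fin}_\Sigma$ and $S_{\pv M}$ are profinite, hence compact and Hausdorff. Since a continuous bijection from a compact space onto a Hausdorff space is a homeomorphism, and the inverse of a bijective continuous homomorphism of topological algebras is again a homomorphism, it suffices to prove that $\Phi_X$ is \emph{injective}. The whole argument therefore reduces to separating points of $\Om X{Fin}_\Sigma$ that have equal Polish image.

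To do this I would show that every continuous homomorphism into a finite unary $\Sigma$-algebra factors through $\Phi_X$. Let $s\neq t$ in $\Om X{Fin}_\Sigma$. By residual finiteness there is a finite $\Sigma$-algebra $F$ and a continuous homomorphism $\xi:\Om X{Fin}_\Sigma\to F$ with $\xi(s)\neq\xi(t)$; writing $\varphi_0=\xi|_X$ and $\varphi_0(c)=c_F$ for $c\in\Sigma_0$, freeness gives $\xi=\hat{\varphi_0}$. It is enough to produce a \emph{continuous map} $g:S_{\pv M}\to F$ with $g\circ\Phi_X=\xi$: then $g(\Phi_X(s))\neq g(\Phi_X(t))$ forces $\Phi_X(s)\neq\Phi_X(t)$, giving injectivity.

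The key construction uses the full transformation monoid $\Cl T_F$, a finite monoid under composition. I would define a continuous monoid homomorphism $h:\Om{X\cup\Sigma}M\to\Cl T_F$ on the generating alphabet $X\cup\Sigma_0\cup\Sigma_1$ by sending each unary symbol $w\in\Sigma_1$ to the transformation $w_F$ and each base symbol $b\in X\cup\Sigma_0$ to the constant transformation $k_{\varphi_0(b)}$ with value $\varphi_0(b)$. For a finite term with Polish image the monoid word $w_1\cdots w_kb$, the element $h(\Phi_X(w_1\cdots w_kb))=w_{1,F}\circ\cdots\circ w_{k,F}\circ k_{\varphi_0(b)}$ is again constant, with value $u_F(\varphi_0(b))=\xi(w_1\cdots w_kb)$, matching the composition convention of Section~\ref{sec:unary}. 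Now the constant transformations form a closed (indeed finite) subset of $\Cl T_F$; since finite terms are dense in $\Om X{Fin}_\Sigma$ and both $h$ and $\Phi_X$ are continuous, the image $h(S_{\pv M})$ consists entirely of constant transformations. Composing with the continuous evaluation $e\colon k_v\mapsto v$ yields $g=e\circ h|_{S_{\pv M}}$. That $g\circ\Phi_X=\xi$ holds because both continuous maps agree on the dense set of finite terms, and that $g$ is a $\Sigma$-algebra homomorphism follows from this equality together with the surjectivity of the homomorphism $\Phi_X$. (This also shows, via Corollary~\ref{c:S-V-free}, that the pseudovariety $\Cl V$ corresponding to $\pv V=\pv M$ is all of $\pv{Fin}_\Sigma$.)

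The main obstacle is exactly this reverse factorization: one must simulate the evaluation of an arbitrary unary term inside a monoid. The device of sending base symbols to \emph{constant} transformations succeeds because applying a unary operation amounts to left composition by a single transformation, so the monoid product on $\Om{X\cup\Sigma}M$ faithfully reproduces term evaluation and the final base symbol can be read off. This is precisely what breaks down beyond the unary case, as Proposition~\ref{p:not-faithful} demonstrates: for an operation of arity $\geq2$ the Polish word conflates several arguments, which cannot be disentangled from a monoid product. Thus the argument genuinely relies on $\Sigma$ being at most unary, and the remaining edge cases ($X=\emptyset$ or $\Sigma_1=\emptyset$) are covered verbatim by the same construction.
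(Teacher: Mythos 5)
Your proof is correct and follows essentially the same route as the paper's: reduce to injectivity, separate the two given points by a continuous homomorphism onto a finite algebra $F$, and factor that homomorphism through $\Phi_X$ by simulating unary term evaluation inside a finite monoid obtained via the universal property of $\Om{X\cup\Sigma}M$. The only (immaterial) difference is that you realize the elements of $F$ inside the full transformation monoid $\Cl T_F$ as constant maps and check the factorization on the dense set of finite terms, whereas the paper adjoins $F$ and a zero to a monoid of transformations of $F$ and verifies directly that the resulting restriction to $S_{\pv M}$ is a homomorphism of $\Sigma$-algebras.
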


\begin{proof}
  Since $\Phi_X$ is an onto continuous homomorphism, it suffices to
  show that it is injective. Suppose that $a$ and $b$ are distinct
  elements of $\Om X{Fin}_\Sigma$. Since $\Om X{Fin}_\Sigma$ is
  residually finite, there exists a continuous homomorphism %
  $\varphi:\Om X{Fin}_\Sigma\to F$ into a finite $\Sigma$-algebra such
  that $\varphi(a)\ne\varphi(b)$. Let $M=M(F)\uplus F\uplus\{0\}$ 
  be a disjoint union of finite discrete spaces, so that it is itself
  a finite discrete space.
  We define on $M$ a multiplication as follows:
  \begin{itemize}
  \item for $g,h\in M(F)$, we let $g\cdot h=g\circ h$;
  \item for $g\in M(F)$ and $s\in F$, we put $g\cdot s=g(s)$;
  \item all remaining products are set to be 0.
  \end{itemize}
  It is easy to verify that $M$ is a monoid for the above
  multiplication. Let $\iota:X\to\Om X{Fin}_\Sigma$ be the natural
  generating mapping and consider the evaluation mappings
  $E_0:\Sigma_0\to F$ and $E_1:\Sigma_1\times F\to F$. Note that $E_1$
  induces a continuous mapping $\varepsilon:\Sigma_1\to M(F)$ into the
  discrete monoid $M(F)$ defined by $\varepsilon(u)(s)=E_1(u,s)$ for
  $u\in\Sigma_1$ and $s\in F$. We may now define a continuous mapping
  $\psi:X\cup\Sigma\to M$ by letting $\psi(x)=\varphi(\iota(x))$ for
  $x\in X$, $\psi(c)=E_0(c)$ for $c\in\Sigma_0$, and
  $\psi(u)=\varepsilon(u)$ for $u\in\Sigma_1$. By the universal
  property of the free profinite monoid~$\Om{X\cup\Sigma}M$, 
  $\psi$ induces a continuous
  homomorphism of monoids %
  $\hat{\psi}:\Om{X\cup\Sigma}M\to M$ such that
  $\hat{\psi}\circ\eta=\psi$, where
  $\eta:X\cup\Sigma\to\Om{X\cup\Sigma}M$ is the natural generating
  mapping. The following diagram may help the reader to keep track of
  all these continuous mappings.
  \begin{displaymath}
    \xymatrix@R=2mm{
      S_{\pv M} \ar@{^{(((}->}[rrr] \ar@{-->}@/_8mm/[dddd]_\gamma
      &&& \Om{X\cup\Sigma}M \ar[dddd]_{\hat{\psi}} \\
      & X \ar[ld]_(.35)\iota \ar@{^{(((}->}[rd]
      && \\
      **[r]{\Om X{Fin}_\Sigma} \ar[dd]^\varphi \ar[uu]_{\Phi_X}
      & \Sigma_0 \ar[ldd]^{E_0} \ar@{^{(((}->}[r]
      & X\cup\Sigma \ar[ruu]_\eta \ar[rdd]^\psi
      & \\
      & \Sigma_1 \ar[r]_(.4)\varepsilon \ar@{^{(((}->}[ru]
      & M(F) \ar@{^{(((}->}[rd]
      & \\
      F \ar@{^{(((}->}[rrr]
      && 
      & M
    }
  \end{displaymath}
  
  Let $\gamma$ be the restriction of~$\hat{\psi}$ to $S_M$. Note that
  $\gamma$ takes its values in~$F$: all elements of~$S_{\pv M}$ are of
  the form $wa$, where $w$ belongs to the closed submonoid
  of~$\Om{X\cup\Sigma}M$ generated by~$\eta(\Sigma_1)$ and
  $a\in\overline{\eta(X\cup\Sigma_0)}$; hence, we have
  $\hat{\psi}(wa)=\hat{\psi}(w)\hat{\psi}(a)$, where $\hat{\psi}(w)\in
  M(F)$ and $\hat{\psi}(a)\in F$, resulting in $\hat{\psi}(wa)\in F$
  according to the definition of the multiplication in~$M$. Next, we
  claim that $\gamma$ is a homomorphism of $\Sigma$-algebras. Indeed,
  for $u\in\Sigma_1$ and $v\in S_{\pv M}$, the following equalities
  hold:
  \begin{displaymath}
    \hat{\psi}(u_{S_{\pv M}}(v))
    =\hat{\psi}(\eta(u)v)
    =\hat{\psi}(\eta(u))\hat{\psi}(v)
    =\varepsilon(u)\hat{\psi}(v)
    =u_F(\hat{\psi}(v)).
  \end{displaymath}

  Note that the diagram commutes as $\gamma\circ\Phi_X=\varphi$:
  since, by the above, both sides of the equation are continuous
  homomorphisms of $\Sigma$-algebras, it suffices to check that
  composing them with $\iota$ we obtain an equality and, indeed, for
  $x\in X$, we have
  \begin{displaymath}
    \varphi(\iota(x))
    =\psi(x)
    =\hat{\psi}(\eta(x))
    =\hat{\psi}(\Phi_X(\iota(x)))
    =\gamma(\Phi_X(\iota(x))).
  \end{displaymath}
  Finally, since $\varphi(a)\ne\varphi(b)$, it follows that
  $\Phi_X(a)\ne\Phi_X(b)$, which establishes that $\Phi_X$ is injective.
\end{proof}

\bibliographystyle{amsplain}
\bibliography{sgpabb,ref-sgps}

\end{document}